\newlength{\myhmargin} \setlength{\myhmargin}{1in} \addtolength{\myhmargin}{18pt}
\newcommand{\longcomment}[2]{#2}
\DeclareFontFamily{U}{mathx}{\hyphenchar\font45}
\DeclareFontShape{U}{mathx}{m}{n}{
      <5> <6> <7> <8> <9> <10>
      <10.95> <12> <14.4> <17.28> <20.74> <24.88>
      mathx10
      }{}
\DeclareSymbolFont{mathx}{U}{mathx}{m}{n}
\DeclareMathAccent{\widecheck}{0}{mathx}{"71}
\newcommand{\HMto}{\widecheck{\mathit{HM}}}
    \def\HMto{%
       \setbox0=\hbox{$\widehat{\mathit{HM}}$}
       \setbox1=\hbox{$\mathit{HM}$}
       \dimen0=1.1\ht0
       \advance\dimen0 by 1.17\ht1
       \smash{\mskip2mu\raise\dimen0\rlap{%
          \begin{turn}{180}
              {$\widehat{\phantom{\mathit{HM}}}$}
           \end{turn}} \mskip-2mu    
                \mathit{HM}
    }{\vphantom{\widehat{\mathit{HM}}}}{}}
    \newcommand*\oline[1]{%
  \vbox{%
    \hrule height 0.35pt
    \kern0.1ex
    \hbox{%
      \kern-0.0em
      \ifmmode#1\else\ensuremath{#1}\fi
      \kern-0.1em
    }
  }
}
\newtheorem{theorem}{Theorem}[section]
\newtheorem{lemma}[theorem]{Lemma}
\newtheorem{corollary}[theorem]{Corollary}
\newtheorem{proposition}[theorem]{Proposition}
\newtheorem{question}[theorem]{Question}
\theoremstyle{definition}
\newtheorem{definition}[theorem]{Definition}
\newtheorem{remark}[theorem]{Remark}
\newtheorem*{rep@thm}{\rep@title}
\newcommand{\newreptheorem}[2]{%
\newenvironment{rep#1}[1][0,0]{%
\def\rep@title{#2##1}%
\begin{rep@thm}}%
{\end{rep@thm}}}
\title{Quasipositive surfaces and decomposable Lagrangians}
\author[Lev Tovstopyat-Nelip]{Lev Tovstopyat-Nelip}
\address{Department of Mathematics \\ Michigan State University}
\email{tovstopy@msu.edu}
\begin{document}

\begin{abstract} 
We show that a quasipositive surface with disconnected boundary induces a map between the knot Floer homology groups of its boundary components preserving the transverse invariant. As an application, we show that this invariant can be used to obstruct decomposable Lagrangian cobordisms of arbitrary genus within Weinstein cobordisms. The construction of our maps rely on the comultiplicativity of the transverse invariant. 
Along the way, we also recover various naturality statements for the invariant under contact +1 surgery. \end{abstract}

\maketitle


\vspace{-1cm}
\section{introduction}
\label{sec:intro}

Various authors have defined effective invariants of Legendrian and transverse links using the knot Floer homology package \cite{grid, stipV, LOSS}. The most general of these is the so-called BRAID invariant \cite{equiv}, which associates to a transverse link $K\subset (Y,\xi)$ classes
\[
t^\circ(K)\subset HFK^\circ (-Y,K), \text{ where $\circ\in \{\wedge,-\}$}.
\]
Via Legendrian approximation, we think of the BRAID invariant as an invariant of Legendrian links which is preserved under negative stabilizations.
Our main result is that the BRAID invariant can be used to obstruct decomposable Lagrangian link cobordisms within Weinstein cobordisms\footnote{decomposable Lagrangians in the symplectization of $(S^3,\xi_{std})$ are built up from elementary cobordisms associated to \emph{births} and \emph{pinches}, we extend this notion to the setting of Weinstein cobordisms in Section \ref{sec:lagr}.} between closed contact manifolds, with no restrictions on the genus:


\begin{theorem}
\label{lagrangianknot}
Let $(W,\omega): (Y_1,\xi_1)\to (Y_2,\xi_2)$ be a Weinstein cobordism. A decomposable Lagrangian cobordism $S\subset W$ between Legendrian knots $L_i\subset (Y_i,\xi_i)$ induces an $\mathbb{F}[U]$-module homomorphism 
\[
f_S: HFK^{-}(-Y_2,L_2)\to HFK^{-}(-Y_1,L_1)
\]
which is natural with respect to the Legendrian invariant, $t^-(L_2)\xrightarrow{f_S} t^-(L_1)$. Moreover, we have a relationship between the $\widehat{t}$-invariants, if $\widehat{t}(L_1)\ne 0$ then $\widehat{t}(L_2)\ne 0$. 
\end{theorem}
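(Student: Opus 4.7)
The plan is to decompose the pair $(W,S)$ into a sequence of elementary pieces and build the map $f_S$ as a composition of maps associated with each piece, checking invariant-naturality stepwise.

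First I would exploit the Weinstein handle decomposition of $(W,\omega)$ together with the decomposable structure of $S$ to break the cobordism into a totally ordered sequence of the following elementary moves: (i) the trace of a Legendrian isotopy of $L$ in a product region of $W$; (ii) the attachment of a Weinstein $1$- or $2$-handle to the complement of $L$; (iii) a birth, which introduces a standard Legendrian unknot component; (iv) a pinch/saddle move, which resolves a crossing of the front projection and reduces the number of crossings of the involved strands. For each elementary piece I would produce an $\mathbb{F}[U]$-linear map on $HFK^-$ going in the reverse direction, and then define $f_S$ as the composition.

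For pieces of type (i) the induced map is the isomorphism from invariance of the Legendrian invariant under Legendrian isotopy. For pieces of type (ii), the ambient Weinstein handle attachment becomes, under the orientation reversal involved in passing to $HFK^-(-Y,\cdot)$, a contact $+1$ surgery along a Legendrian sphere disjoint from $L$; here I would invoke the naturality statement under contact $+1$ surgery, which the paper explicitly says it recovers along the way. For pieces of type (iii), the birth introduces an unknot with maximal Thurston--Bennequin number, and the Legendrian invariant of the resulting link is, up to the comultiplicativity story, related to the original by inclusion of the canonical generator; the induced map is the natural unit-insertion map. For pieces of type (iv) — the main content — I would invoke the quasipositive surface result promised earlier in the introduction: a pinch is locally a band move cobordism, which can be realized as a quasipositive surface with disconnected boundary between the pre- and post-pinch transverse links, and the theorem that such surfaces induce a map preserving $t^-$ is precisely what we need.

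The main obstacle is step (iv): showing that the pinch map preserves $t^-$. The strategy is to interpret the saddle cobordism as a (small) quasipositive surface and then appeal to comultiplicativity of the BRAID invariant under the split component produced by the resolution, matching the pinch map on knot Floer homology with the algebraic coproduct on the transverse classes. The supporting claims — that (ii) and (iii) give well-defined, invariant-preserving, $U$-equivariant maps, and that composing all pieces gives a map independent of the chosen decomposition up to the level of sending $t^-(L_2)$ to $t^-(L_1)$ — are then straightforward. Finally, to obtain the $\widehat{t}$-statement, I would set $U=0$ throughout the above: every elementary map descends to the hat-flavor, and naturality $t^-(L_2)\mapsto t^-(L_1)$ reduces mod $U$ to $\widehat{t}(L_2)\mapsto \widehat{t}(L_1)$, so $\widehat{t}(L_1)\neq 0$ forces $\widehat{t}(L_2)\neq 0$.
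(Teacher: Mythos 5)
Your overall scheme---decompose $(W,S)$ into elementary pieces and compose maps coming from contact $+1$ surgery naturality, a birth lemma, and the quasipositive-surface theorem---is the same as the paper's (it proves the link version, Theorem \ref{decomp}, exactly this way, and Theorem \ref{lagrangianknot} is the knot case, where $HFK^==HFK^-$). But there are genuine gaps. The main one is the pinch step, which is where the actual work of this proof lives: you assert that the saddle ``can be realized as a quasipositive surface with disconnected boundary between the pre- and post-pinch transverse links,'' but producing such a surface inside a page of an open book supporting $(Y,\xi)$, with the correct boundary, is precisely what must be established before Theorem \ref{qpm1} can be invoked. The paper does this by adding a Legendrian edge to $L_1$ inside the Darboux ball to form a Legendrian graph $\Gamma$, taking a ribbon $R(\Gamma)$, deleting a collar of its boundary, and checking that the resulting subsurface has oriented boundary a pair of non-isolating links realizing $-L_1$ and $S_-(L_2)$ (a negative stabilization of $L_2$, which is harmless for the transverse invariant); the surface is then placed in a page by extending $\Gamma$ to the $1$-skeleton of a contact cell decomposition, and the two orientation cases for $L_1\cap B$ are handled separately. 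None of this construction, nor the verification of the boundary-compatibility hypothesis of Theorem \ref{qpm1}, appears in your outline.

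Two further points. Your item (ii) does not work as stated for $1$-handles: Theorem \ref{contactsurgery} concerns surgery along Legendrian \emph{knots}, so it handles Weinstein $2$-handles (whose reversal is contact $+1$ surgery on the dual knot), but a Weinstein $1$-handle joining two components of $Y_1$ is a contact connected sum, not a surgery on a knot; the paper treats this case (together with $0$-handles and the disconnected manifolds that then arise, which require extending the definition of $t^\circ$) via Lemma \ref{connectedsum}. Finally, the $\widehat t$-statement does not follow by ``setting $U=0$'': the pinch map of Theorem \ref{qpm1} is not shown to descend to a map on $\widehat{HFK}$ carrying $\widehat t$ to $\widehat t$---its hat conclusion is only a nonvanishing implication, obtained through the totally blocked theory (Theorem \ref{thm:comult} and Lemma \ref{tildenonvanishing})---so the correct way to finish, as in the paper, is to chain the implications $\widehat t(L_1)\ne 0\Rightarrow\widehat t(L_2)\ne 0$ furnished by each elementary step, rather than reduce the minus-level naturality mod $U$.
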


In this paper all links are oriented; given a link $K$ we let $-K$ denote the same link equipped with reversed orientation. Reversing the orientation of a Lagrangian cobordism from $L_1$ to $L_2$ produces a Lagrangian cobordism from $-L_1$ to $-L_2$. Theorem \ref{lagrangianknot} has the following corollary:

\begin{corollary}
\label{uaction}
Suppose $L_i\subset (Y_i,\xi_i)$ are Legendrian knots such that, for some $n$, either
\begin{itemize}
\item $t^- (L_1)\notin Im(U^n) \text{ and } t^-(L_2)\in Im(U^n),$ \text{or}
\item $t^- (-L_1)\notin Im(U^n) \text{ and } t^-(-L_2)\in Im(U^n)$
\end{itemize}
then there is no Lagrangian cobordism from $L_1$ to $L_2$ inside any Weinstein cobordism from $(Y_1,\xi_1)$ to $(Y_2,\xi_2)$.
\end{corollary}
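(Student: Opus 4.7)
The plan is to argue by contrapositive: assume a decomposable Lagrangian cobordism from $L_1$ to $L_2$ exists and show that the two alternatives in the hypothesis must both fail. The proof is essentially a direct application of Theorem \ref{lagrangianknot}, together with the parenthetical remark about orientation reversal given just before the corollary.

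First I would handle the first bullet. Suppose $S \subset W$ is a decomposable Lagrangian cobordism from $L_1$ to $L_2$ inside some Weinstein cobordism $(W,\omega)\colon (Y_1,\xi_1)\to (Y_2,\xi_2)$. Theorem \ref{lagrangianknot} produces an $\mathbb{F}[U]$-module map
\[
f_S\colon HFK^{-}(-Y_2,L_2)\to HFK^{-}(-Y_1,L_1)
\]
sending $t^-(L_2)$ to $t^-(L_1)$. Since $f_S$ is $U$-equivariant, it carries $\mathrm{Im}(U^n)$ into $\mathrm{Im}(U^n)$: writing $t^-(L_2)=U^n\cdot x$ yields $t^-(L_1)=f_S(U^n x)=U^n f_S(x)\in \mathrm{Im}(U^n)$, contradicting $t^-(L_1)\notin \mathrm{Im}(U^n)$.

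For the second bullet I would invoke the orientation-reversal remark from the paragraph preceding the corollary: reversing the orientation on $S$ produces a Lagrangian cobordism $-S$ from $-L_1$ to $-L_2$ inside the same Weinstein cobordism $(W,\omega)$. One should check that decomposability is preserved, which is immediate because orientation-reversal sends births to births and pinches to pinches (these elementary pieces admit both orientations). Theorem \ref{lagrangianknot} applied to $-S$ then yields a $U$-equivariant map $f_{-S}\colon HFK^{-}(-Y_2,-L_2)\to HFK^{-}(-Y_1,-L_1)$ sending $t^-(-L_2)$ to $t^-(-L_1)$, and the same algebraic argument yields a contradiction.

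The corollary is formal given Theorem \ref{lagrangianknot}, so there is no serious technical obstacle. The only point requiring a small amount of care is the orientation-reversal step: one must know both that reversing the orientation of a decomposable cobordism is still decomposable and that the functoriality/naturality statement in Theorem \ref{lagrangianknot} applies to the oriented cobordism $-S$ in the same Weinstein filling. Both are built into the setup of Section \ref{sec:lagr} and the statement of the theorem, so the proof amounts to a two-line argument in each of the two cases.
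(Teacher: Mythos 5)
Your argument is correct and is exactly the intended one: the paper states Corollary \ref{uaction} without proof as an immediate consequence of Theorem \ref{lagrangianknot}, using precisely the $U$-equivariance of $f_S$ together with the orientation-reversal remark preceding the corollary (with ``Lagrangian cobordism'' there meaning the decomposable ones covered by the theorem). Your added check that orientation reversal preserves decomposability (births to births, pinches to pinches) is a reasonable point to make explicit and is consistent with the paper's setup.
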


Reversing the orientation of a Legendrian link exchanges the operations of positive and negative Legendrian stabilizations. As mentioned above, the invariant is unchanged under negative stabilizations, on the other hand positive stabilization corresponds to multiplying $t^-$ by the formal variable $U$ and killing the invariant $\widehat{t}$. Thus the above corollary obstructs a Lagrangian cobordism from $L_1$ to $L_2$ in the case that either
\begin{itemize}
\item $\widehat{t}(L_1)\ne 0$ \text{ and } $L_2$ \text{ admits a positive destabilization, or}
\item $\widehat{t}(-L_1)\ne 0$ \text{ and } $L_2$ \text{ admits a negative destabilization.}
\end{itemize}

Theorem \ref{lagrangianknot} follows from a more general statement for links, Theorem \ref{decomp}, which can be used to obstruct decomposable Lagrangian fillings of links:
\begin{corollary}
Suppose $L\subset (Y,\xi)$ is a Legendrian link.  If either $\widehat{t}(L)=0$ or $\widehat{t}(-L)=0$, then there is no decomposable Lagrangian filling of $L$ in any Stein filling of $(Y,\xi)$.
\end{corollary}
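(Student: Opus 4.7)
The plan is to reduce this corollary to the link-level version of the main result (Theorem~\ref{decomp}) by isolating the first birth in a decomposable filling. Suppose for contradiction that $L$ admits a decomposable Lagrangian filling $\Sigma$ inside some Stein filling $(W,\omega)$ of $(Y,\xi)$. Treating $W$ as a Weinstein cobordism from $\emptyset$ to $(Y,\xi)$, the decomposition of $\Sigma$ begins with a birth, which produces a standard Legendrian unknot $U$ inside a small Darboux $4$-ball $B\subset W$ containing a Weinstein $0$-handle critical point; here $\partial B\cong (S^3,\xi_{std})$.

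Excising the interior of $B$ turns $W$ into a Weinstein cobordism $(W',\omega')\colon (S^3,\xi_{std})\to (Y,\xi)$, and $\Sigma\cap W'$ becomes a decomposable Lagrangian cobordism from $U$ to $L$ inside $W'$. Applying Theorem~\ref{decomp} to this cobordism, the hypothesis $\widehat{t}(U)\neq 0$ forces $\widehat{t}(L)\neq 0$. Since $U$ is the standard $tb=-1$ Legendrian unknot, one has $\widehat{HFK}(-S^3,U)\cong \mathbb{F}$ with $\widehat{t}(U)$ its generator, so indeed $\widehat{t}(U)\neq 0$. This contradicts $\widehat{t}(L)=0$, eliminating the first case.

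For the case $\widehat{t}(-L)=0$, I would reverse the orientation of $\Sigma$ to obtain a decomposable Lagrangian filling of $-L$, using that births and pinches are preserved as decomposition moves under orientation reversal of the cobordism. The previous paragraph applied to $-L$ then gives $\widehat{t}(-L)\neq 0$, a contradiction.

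The main obstacle I anticipate is geometric rather than algebraic: one must confirm that the extension of decomposable Lagrangian cobordisms to Weinstein cobordisms given in Section~\ref{sec:lagr} is compatible with excising a small Darboux ball around the first birth, so that the remainder is genuinely a decomposable cobordism from $U$ to $L$ inside $W'$. Once this separation is in place, everything reduces to Theorem~\ref{decomp} and the standard nonvanishing of $\widehat{t}$ for the maximal-$tb$ Legendrian unknot.
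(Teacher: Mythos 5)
Your overall strategy is the right one and matches how the paper derives this corollary: reduce to the link-level statement, Theorem \ref{decomp}, and handle the case $\widehat{t}(-L)=0$ by reversing the orientation of the filling surface. The gap is in the geometric setup of your reduction. You assume the first birth of the filling $\Sigma$ takes place in a Darboux $4$-ball around a Weinstein $0$-handle critical point, so that the born unknot $U$ lives in $(S^3,\xi_{std})$ and $\widehat{t}(U)\ne 0$ can be read off from $\widehat{HFK}(-S^3,U)\cong\mathbb{F}$. In a decomposable Lagrangian cobordism inside a Weinstein cobordism the elementary pieces are stacked in an arbitrary order: any number of Weinstein $0$-, $1$- and $2$-handles may be attached before the surface first appears, so the first birth occurs in a Darboux ball of some intermediate contact level $(Y',\xi')$, which in general is neither $(S^3,\xi_{std})$ nor adjacent to a $0$-handle. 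At that level your justification of $\widehat{t}(U)\ne 0$ does not apply; what is actually needed is $c(\xi')\ne 0$, which holds because $(Y',\xi')$ bounds the lower portion of the Weinstein domain, combined with Lemma \ref{meridians} (the born unknot is the $tb=-1$ approximation of the transverse unknot $U_1$, whose invariant is identified with $c(\xi')$). Separately, excising an interior $4$-ball at the birth point and claiming the complement is a Weinstein cobordism carrying the remaining surface as a decomposable cobordism from $U$ to $L$ is exactly the kind of restructuring that needs an argument (e.g.\ commuting the birth below all handle attachments), which you flag but do not supply.

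A cleaner repair avoids isolating the birth altogether: excise a small standard ball around a $0$-handle critical point chosen disjoint from $\Sigma$, so that $W$ becomes a Weinstein cobordism from $(S^3,\xi_{std})$ to $(Y,\xi)$ and $\Sigma$ a decomposable Lagrangian cobordism from the empty link to $L$. With the extended definition of the invariant from Section \ref{sec:lagr}, the invariant of the empty link is $c(\xi_{std})\ne 0$, and births occurring at whatever intermediate levels they occupy are already absorbed into the proof of Theorem \ref{decomp} via Lemmas \ref{meridians} and \ref{birth}. With either repair, the contradiction with $\widehat{t}(L)=0$, and the orientation-reversed argument giving the case $\widehat{t}(-L)=0$, go through as you describe.
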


Our results fit into an existing body of work showing that knot Floer homology effectively obstructs Lagrangian link cobordisms. For the hat theory, in the case that the Weinstein cobordism is the symplectization of the tight 3-sphere, the above results have already been established by Baldwin, Lidman and Wong \cite{Lagr}. They work with the so-called GRID invariants defined in \cite{grid}; their arguments utilize grid diagrams, and are specific to the setting they work in. 
There are also various results for obstructing Lagrangian concordances, i.e. genus zero cobordisms, equipped with various regularity conditions.
For exact Lagrangians in symplectizations of contact manifolds see the works of Baldwin and Sivek \cite{BS1, BS2}, and for regular Lagrangians in Weinstein cobordisms see the paper by Golla and Juh\'asz \cite{func}. 

\begin{remark}
In the case that a Weinstein cobordism is the symplectization of the tight 3-sphere, it is unclear whether our definition of decomposable Lagrangian is more general than the one previously studied in the literature. Our definition allows one to consider any Weinstein structure, as opposed to just the trivial one. In particular, Kyle Hayden pointed out to the author that a regular Lagrangian cobordism may be decomposable with respect to a \emph{special Weinstein structure}, as defined in \cite{flexible}.
\end{remark}

To establish Theorem \ref{lagrangianknot} and its generalization, Theorem \ref{decomp}, we prove the result for certain elementary cobordisms associated to \emph{births}, \emph{pinches} and Weinstein handle attachments. The most interesting case is that of the pinch-move. If $L_1$ and $L_2$ differ by a pinch move \footnote{in particular $L_1$ and $L_2$ can be thought of as lying in the same contact manifold.}, we show that their transverse push-offs cobound some quasipositive surface realized as a subsurface of a page of an open book supporting the ambient contact structure. We then appeal to:


\begin{theorem}
\label{qpm}
Let $(B,\pi)$ be an open book supporting $(Y,\xi)$, and $S\subset \Sigma = \pi^{-1}(0)$ a subsurface whose oriented boundary is the union of knots $K_1$ and $K_2$. If $K_1$ is boundary compatible,
there exists an $\mathbb{F}[U]$-module homomorphism
\[
f_S: HFK^{-}(-Y, K_1)\to HFK^{-}(-Y, -K_2)
\]
which is natural with respect to the transverse invariant, $t^-(K_1) \xrightarrow{f_S} t^-(-K_2)$. Moreover, we have a relationship between the $\widehat{t}$-invariants, if $\widehat{t}(-K_2)\ne 0$ then $\widehat{t}(K_1)\ne 0$. 
\end{theorem}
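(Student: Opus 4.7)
The plan is to write $S$ as a composition of elementary bands inside the page $\Sigma$ and then invoke the \emph{comultiplicativity} of the BRAID invariant band by band. After an isotopy of $S$ rel $\partial S = K_1 \cup K_2$, a handle decomposition of $S$ with $K_1$ along the bottom supplies an ordered collection of properly embedded arcs $\gamma_1,\ldots,\gamma_k \subset \Sigma$, where the $i$-th arc is attached along the current boundary. Band surgery along $\gamma_i$ replaces this boundary by a new transverse link inside the page, and after the final band the link is precisely $-K_2$. Each intermediate boundary is a closed braid with respect to the open book, so the BRAID invariant is defined throughout.

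The main step is to treat a single band. Given a boundary-compatible closed braid $K \subset (Y,\xi)$ and an arc $\gamma \subset \Sigma$ with $\partial\gamma$ on $K$, I would construct a map
\[
f_\gamma : \HFKm(-Y,K)\to \HFKm(-Y,-K'),
\]
where $K'$ denotes the band surgery of $K$ along $\gamma$, and show that $f_\gamma(t^-(K)) = t^-(-K')$. The map is built by realizing the band as a contact $+1$ surgery on a Legendrian approximation of $\gamma$ sitting in the page; the associated $2$-handle cobordism induces a knot Floer map to which naturality of the BRAID invariant under contact $+1$ surgery applies. The content of this naturality, in the pinch-move form, is precisely the comultiplicativity of $t^-$ advertised in the abstract, and is one of the subsidiary facts the paper recovers along the way. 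Boundary compatibility of $K_1$ is used here to ensure that each intermediate link is a genuine closed braid, so that $t^-$ is defined at every stage.

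I would then define $f_S = f_{\gamma_k}\circ \cdots \circ f_{\gamma_1}$, so that naturality at each stage delivers $f_S(t^-(K_1)) = t^-(-K_2)$. Verifying that $f_S$ depends only on $S$ up to isotopy rel $\partial S$ (not on the choice of handle decomposition) reduces to comparing different orderings and isotopies of the $\gamma_i$, which is controlled by the independence of the underlying contact $+1$ surgery cobordism maps. For the hat statement, each $f_{\gamma_i}$ is $\mathbb{F}[U]$-equivariant and therefore descends to the quotient $\HFKm \to \HFK$; since $\widehat{t}$ is the image of $t^-$ under this quotient, $\widehat{t}(-K_2)\neq 0$ pulls back through $f_S$ to force $\widehat{t}(K_1)\neq 0$.

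The main obstacle is the single-band case, specifically identifying the band-surgery cobordism map with the comultiplication on the transverse invariant and verifying naturality at the $\HFKm$ level rather than merely in the hat theory. Once that single-band identification is secured, the multi-band case is a formal composition and the hat consequence is a routine appeal to $U$-equivariance.
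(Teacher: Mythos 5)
Your high-level skeleton agrees with the paper's: decompose $S$ (via a Morse function / handle decomposition rel boundary) into elementary pieces with one saddle each, construct a map per piece preserving the invariant, and compose. But the mechanism you propose for the single-band step is a genuine gap. A band surgery along an arc $\gamma$ in the page is \emph{not} a contact $+1$ surgery: contact surgery is performed along a closed Legendrian curve, not an arc (so "a Legendrian approximation of $\gamma$" is not a surgery curve), and, more fatally, contact $+1$ surgery changes the ambient manifold from $Y$ to $Y_K$, whereas in Theorem \ref{qpm} both $K_1$ and $-K_2$ live in the same $(Y,\xi)$ and the elementary map must stay inside $HFK^{-}(-Y,\cdot)$. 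The paper's actual mechanism is braid-theoretic: one braids $K\cup K_1$ using auxiliary arcs and push-maps, and the real work (Lemma \ref{pants}) is to show the resulting pointed monodromy can be rewritten, via conjugations and positive Markov (de)stabilizations, as $\psi\circ\sigma_\gamma$, where $\sigma_\gamma$ is a right-handed \emph{half twist} about an arc joining two marked points and $\psi$ is a braiding of $K\cup -K_2$; this is verified by checking the action of the two pointed monodromies on a collection of arcs cutting the pair of pants into a disk. Comultiplicativity then gives a map to $HFK^{=}(-Y,K\cup -K_2)\otimes HFK^{=}(-\#^m(S^1\times S^2),U_n)$, and the second factor is removed by the projection $G^{=}$, which exists because $t^{=}(U_n)$ generates a free summand (Lemma \ref{lemqpm}). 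None of this is supplied by "the 2-handle cobordism map for contact $+1$ surgery"; indeed in the paper the surgery naturality statements are themselves \emph{consequences} of this same factor-out-a-twist-and-comultiply argument, with a Dehn twist in place of the half twist, not inputs to the band case. You also need the well-definedness of the transverse representative of each intermediate boundary-compatible link (Proposition \ref{uniquetrans}) and the observation that boundary compatibility propagates through the saddles via the orientation conditions; asserting that each intermediate boundary "is a closed braid" skips this. (Independence of $f_S$ from the handle decomposition, which you propose to verify, is not claimed in the theorem and is in fact left open in the paper.)

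There is a second, smaller gap in your hat argument: $\widehat{HFK}$ is not the quotient of the module $HFK^-$ by $\mathrm{Im}(U)$ — the quotient is taken at the chain level, and on homology one only has a long exact sequence — so a $U$-equivariant map of $HFK^-$-modules does not "descend" to $\widehat{HFK}$ without a chain-level construction. The paper instead obtains the implication $\widehat{t}(-K_2)\ne 0 \Rightarrow \widehat{t}(K_1)\ne 0$ directly from comultiplication in the totally blocked theory (Theorem \ref{thm:comult} together with Lemmas \ref{tildenonvanishing} and \ref{lemqpm}), applied saddle by saddle. Your logical direction (nonvanishing of the image forces nonvanishing of the source) is fine once such a hat-level statement exists, but you need this separate argument rather than an appeal to $U$-equivariance of the minus-level map.
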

See Theorem \ref{qpm1} for a more general statement for links. See Section \ref{boundarycomp} for the definition of boundary compatible; this is a less restrictive notion than non-isolating. Given an isotopy class of a boundary compatible link in a page of an open book, we associate to it a well defined transverse isotopy class. In the case that the link is non-isolating, we prove that our associated transverse link is the push-off of the unique Legendrian isotopy class given by Honda's Legendrian realization principle \cite{honda, LOSS}, see Theorem \ref{uniqueleg}.

\begin{theorem}
\label{uniquetransapprox}
Suppose $(B,\pi)$ is an open book supporting a contact manifold $(Y,\xi)$, and $L$ a boundary compatible link in a page $\Sigma = \pi^{-1}(0)$. The smooth isotopy class of $L$ in $\Sigma$ uniquely determines a transverse link $T(L)$ in $(Y,\xi)$ up to transverse isotopy. Moreover, if $L$ is non-isolating, and hence naturally Legendrian, then $L_+ = T(L)$.
\end{theorem}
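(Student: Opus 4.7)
The plan is two-fold: first construct $T(L)$ as a Reeb pushoff of a representative of $L$ lying in the page $\Sigma$, then verify that this pushoff is independent of the representative up to transverse isotopy; second, in the non-isolating case, identify $T(L)$ with the positive transverse pushoff of the Legendrian produced by Honda's realization principle.

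For the construction, I would fix a contact form $\alpha$ adapted to $(B,\pi)$ in Giroux's sense, so that $\ker \alpha = \xi$, $\alpha$ is positive on the binding, and $d\alpha$ is a positive area form on each page. The Reeb vector field $R_\alpha$ is then transverse to the interior of every page and tangent to $B$. Given $L \subset \Sigma$ boundary compatible, I would define $T(L)$ by flowing $L$ along $R_\alpha$ for a sufficiently small positive time. Since $R_\alpha$ is transverse to $\xi$, the pushed-off link is transverse, and boundary compatibility is precisely what is needed so that the flow makes sense for components of $L$ that approach $B$.

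To see that $T(L)$ only depends on the smooth isotopy class of $L$ in $\Sigma$, I would argue as follows. Given a smooth isotopy $\{L_t\} \subset \Sigma$ of boundary compatible links, the product collar $\Sigma \times (-\delta, \delta)$ of the zero page (using the open book structure) allows one to extend $\{L_t\}$ to a compactly supported ambient isotopy $\Phi_t$ of $Y$. Gray stability produces a transverse isotopy between $T(L_0)$ and $\Phi_1(T(L_0))$, while the latter is transverse isotopic to $T(L_1)$ by construction. Independence of the choice of $\alpha$ follows from the convexity of the space of contact forms adapted to $(B,\pi)$, combined with Gray's theorem applied to the corresponding path of contact structures.

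For the final assertion, assume $L$ is non-isolating. After a $C^\infty$-small perturbation of $\Sigma$ to a convex surface $\Sigma'$ with dividing set close to $B$, Honda's realization principle (as in \cite{honda, LOSS}) produces a Legendrian representative $\mathcal{L} \subset \Sigma'$ smoothly isotopic to $L$. The positive transverse pushoff $L_+$ is defined by flowing $\mathcal{L}$ off of $\Sigma'$ in the positive normal direction (determined by the co-orientation of $\xi$). Near the interior of the page this direction agrees up to positive rescaling with $R_\alpha$, so $L_+$ is transverse isotopic to the Reeb pushoff, i.e.\ to $T(L)$. The main obstacle here will be a careful analysis near the binding: one must check that the convex perturbation $\Sigma \rightsquigarrow \Sigma'$ and the Reeb flow cooperate in the standard neighborhood model of $B$, so that the two pushoffs can be isotoped to one another through transverse links without crossing the binding.
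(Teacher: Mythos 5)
There is a genuine gap at the foundation of your construction. Flowing $L$ along the Reeb vector field $R_\alpha$ does \emph{not} produce a transverse link: the Reeb flow preserves $\alpha$, hence preserves $\xi$, so the time-$\epsilon$ pushoff $\Phi_\epsilon(L)$ is transverse to $\xi$ at $\Phi_\epsilon(p)$ exactly when $L$ was already transverse to $\xi$ at $p$. A link lying in a page is tangent to $\xi$ at every point where it is tangent to the characteristic foliation of the page (and the sign of transversality flips across such points), so your $T(L)$ is in general not a positively transverse link at all, and boundary compatibility plays no role in your construction. The correct ``near-the-page'' construction is the one the paper mentions in Remark \ref{flexible}: use Giroux flexibility to perturb $\Sigma$ so that its characteristic foliation is positively transverse to $L$; it is precisely there (or, in the paper's actual proof, in the choice of auxiliary arcs and the braiding by push-maps $\phi\circ\prod_i\pi_{\alpha_i}$) that the orientation conditions in Definition \ref{boundary} are used.

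Even after repairing the construction, your uniqueness and comparison arguments do not go through as written. The ambient extension $\Phi_t$ of a smooth isotopy of $L$ in $\Sigma$ need not preserve $\xi$, so $\Phi_1(T(L_0))$ need not be transverse, and Gray stability (a statement about paths of contact structures) does not supply the missing transverse isotopy; more to the point, the substantive content of Proposition \ref{uniquetrans} is independence of the auxiliary choices (the arcs $\{\alpha_i\}$, or in the flexibility picture the perturbed foliation), which the paper proves using the generalized transverse Markov theorem together with explicit pointed-monodromy identities (Lemmas \ref{disjoint} and \ref{sequence}, the latter resting on Lemma 4.3 of \cite{braiddynamics}); your proposal does not address this at all. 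Likewise, the identification $L_+=T(L)$ in the non-isolating case is where the real work lies: the paper performs a positive open book stabilization, identifies the slid-over curve $K'$ with $S_-(K)$ and the parallel binding component with the transverse push-off (Lemma 6.5 of \cite{CCSMCM}), and then verifies $T(K)=T(K')$ by comparing pointed monodromies on a cut system of arcs. Your ``the pushoff directions agree up to positive rescaling'' step is built on the flawed Reeb-pushoff definition, and the behavior near the binding --- which you flag as the main obstacle --- is exactly the part that needs an argument.
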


Our construction involves braiding the boundary compatible link about the open book using push-maps, making it transverse in the process. Instead of braiding, one could construct a transverse link using Giroux flexiblity. These two approaches produce the same transverse isotopy class, see Remark \ref{flexible}.
Theorem \ref{qpm1} is established via a detailed study of pointed monodromies, and applying comultiplicativity of the BRAID invariant \cite{braiddynamics}.
\begin{question}
There are choices involved in the construction of the map $f_S$ of Theorems \ref{qpm} and \ref{qpm1}. For any set of choices, the resulting map preserves the BRAID invariant. Is the map independent of the choices made? In particular, is this the map associated to pushing the surface into the Weinstein cobordism and equipping it with some decorations, as in \cite{juhasz}?

\end{question}


Along the way to establishing Theorem \ref{lagrangianknot}, we recover and generalize various naturality statements involving the BRAID invariant and contact +1 surgeries in Section \ref{sec:contactsurg}. We also obtain a non-vanishing result for the transverse invariant $\widehat{t}$ of a strongly-quasipositive link in a contact manifold with non-vanishing Ozsv\'ath-Szab\'o contact invariant, see Corollary \ref{corsqp}.

Of independent interest, we make some observations about ribbons of Legendrian graphs which allow us to generalize a classical result due to Lyon \cite{Lyon}; the proof given is little more than a translation of Lyon's original argument into modern contact geometric language.

\begin{theorem}
\label{thmlyon}
Let $S\subset Y$ be a Seifert surface for a link $K$. Then there exists a contact structure on $Y$ such that $S$ is the ribbon of some Legendrian graph. In particular $S$ can be realized as a $\pi_1$-injective subsurface of a page of an open book decomposition for $Y$.
\end{theorem}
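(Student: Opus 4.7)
The plan is to realize $S$ as the ribbon of a Legendrian graph inside $Y$ with respect to a cleverly chosen contact structure, and then to invoke the dictionary between ribbons of Legendrian graphs and subsurfaces of pages of compatible open books. This is essentially the contact-geometric reformulation of Lyon's fibration-theoretic construction.

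First I would pick a properly embedded spine $G\subset S$, i.e.\ a graph onto which $S$ deformation retracts; such a $G$ exists for any compact surface with nonempty boundary. The inclusion $G\hookrightarrow S$ endows $G$ with a surface framing, namely the $2$-plane field $p\mapsto T_p S$ along $G$, which automatically contains $T_p G$. The crux is to produce a contact structure $\xi$ on $Y$ with $G$ Legendrian and $\xi|_G = TS|_G$. Locally near each smooth edge of $G$, one introduces Darboux coordinates modeled on $(\mathbb{R}^3,\ker(dz - y\,dx))$ with $G$ along the $x$-axis and $S$ coinciding with the $xy$-plane; a direct computation shows that $\xi$ is tangent to $S$ along the axis and transverse to $S$ off the axis (at $(x_0,y_0,0)$ with $y_0\ne 0$, the intersection $\xi\cap TS$ is spanned by $\partial_y$ alone). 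Patching these local models together yields $\xi$ on a neighborhood $N(G)$ for which $S\cap N(G)$ is a genuine ribbon of $G$. One then extends $\xi$ over $Y\smallsetminus N(G)$ by a relative version of the Lutz--Martinet theorem, so that the extended $2$-plane field remains a contact structure on all of $Y$ while staying fixed near $G$.

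With $\xi$ in hand, $S$ is a ribbon of the Legendrian graph $G$ in $(Y,\xi)$, and Giroux's correspondence then produces an open book decomposition of $Y$ supporting $\xi$ whose pages contain $S$ as a subsurface. For the $\pi_1$-injectivity assertion, observe that a page $\Sigma$ deformation retracts onto a graph $G'\supset G$; choosing a spanning tree $T$ of $G$ and extending it to a spanning tree $T'$ of $G'$ by adjoining edges touching the vertices of $G'\smallsetminus G$, one sees that the edges of $G\smallsetminus T$ (free generators of $\pi_1(G)$) form a subset of the edges of $G'\smallsetminus T'$ (free generators of $\pi_1(G')$). Consequently $\pi_1(S)\hookrightarrow\pi_1(\Sigma)$ is injective.

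The main technical hurdle is the contact extension step: producing the global contact structure $\xi$ on $Y$ whose restriction along $G$ realizes the framing inherited from $S$. The Darboux picture near $G$ is routine, so the essential content lies in the flexibility input that upgrades a globally defined plane field, already contact near $G$, to a genuine contact structure everywhere. This is precisely the point at which Lyon's foliation-theoretic argument is replaced by a purely contact-geometric ingredient, and once it is in place the remainder of the proof is formal.
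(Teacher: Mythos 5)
Your argument is correct in outline, but it takes a genuinely different route from the paper. The paper fixes an \emph{arbitrary} contact structure on $Y$, Legendrian-realizes a spine $G\subset S$, and then compares a ribbon $R(\Gamma)$ with $S$ band-by-band: a missing negative twist is created by Legendrian stabilization of the corresponding edge, and a missing positive twist by taking the contact connected sum with an overtwisted $S^3$ and Legendrian connect-summing an edge with a $tb=0$ unknot; the page statement is then quoted from Theorem \ref{thmqpgraph}. You instead build the contact structure ab initio: a germ along $G$ realizing the surface framing, extended over $Y$ by the relative flexibility theorem (Eliashberg's relative existence of overtwisted contact structures, your ``relative Lutz--Martinet''). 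Both proofs legitimately change the contact structure (possibly to an overtwisted one), and both are valid; the paper's version is more elementary and self-contained, staying within stabilization and connected-sum moves and hewing close to Lyon's original argument, while yours is conceptually shorter but outsources the main work to an h-principle and leaves several standard points implicit. In particular: (i) the patching of your edge-wise Darboux models at vertices and around cycles of $G$ deserves a word -- it is cleanest to skip patching altogether and take the model $\bigl(S\times[-1,1],\ \ker(dt+\beta)\bigr)$ with $\beta$ a Liouville form on $S$ whose skeleton is $G$, which exhibits $S\times\{0\}$ as a ribbon of the Legendrian $G$ on the nose (after shrinking $S$ into this neighborhood along the retraction, which is where the ambient isotopy of $S$ enters); (ii) before invoking the relative h-principle you must note that the contact germ extends over $Y$ as a plane field -- this holds because it is defined on a handlebody neighborhood, so the associated map to $S^2$ has degree zero on the boundary, but it is not automatic from the statement you cite; (iii) the assertion that Giroux's correspondence produces an open book whose page \emph{contains} $S$ is not bare Giroux correspondence; it requires extending the Legendrian graph $G$ to the $1$-skeleton of a contact cell decomposition and taking its ribbon as the page, which is exactly the content of Theorem \ref{thmqpgraph} (your spanning-tree argument for $\pi_1$-injectivity then duplicates what that theorem already provides). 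With those three points filled in, your proof goes through.
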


\begin{corollary}
Let $S\subset Y$ be a Seifert surface for a link $K$. One may attach bands to $S$ to obtain the page of some open book for $Y$.
\end{corollary}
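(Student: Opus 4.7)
The plan is to apply Theorem \ref{thmlyon} and observe that the complement of $S$ in the resulting page decomposes into bands.

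First, invoke Theorem \ref{thmlyon} to realize $S$ as a $\pi_1$-injective subsurface of a page $\Sigma$ of an open book decomposition for $Y$. Let $R = \overline{\Sigma \setminus S}$ denote the complementary surface, whose boundary splits into an ``inner'' part $\partial_i R = \partial R \cap \partial S$ and an ``outer'' part $\partial_o R = \partial R \cap \partial \Sigma$.

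Next, since $\Sigma$ is connected with non-empty boundary and $S$ is a proper, non-empty subsurface, every component of $R$ meets $\partial_i R$ non-trivially. Regarding $R$ as a 2-dimensional cobordism from $\partial_i R$ to $\partial_o R$, Morse theory provides a handle decomposition in which no 2-handles appear (as $R$ has non-empty boundary on the outer side) and no 0-handles are needed (as every component of $R$ already touches the starting side $\partial_i R$). Thus $R$ is assembled from a collar of $\partial_i R$ entirely by 1-handle attachments, i.e.\ bands glued along pairs of arcs in $\partial S$. Consequently $\Sigma$ is obtained from $S$ by a finite sequence of band attachments, and since $\Sigma$ is by construction a page of an open book for $Y$, this produces the desired open book page.

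The main subtlety is verifying that the handle decomposition of $R$ really does reduce to band attachment on $S$, without any residual surgery or closed components. Here the $\pi_1$-injectivity furnished by Theorem \ref{thmlyon} is convenient, as it excludes degenerate configurations such as a disk component of $R$ whose entire boundary sits in a single component of $\partial S$: such a disk would force a non-trivial loop in $\pi_1(S)$ to become null-homotopic in $\pi_1(\Sigma)$, contradicting injectivity.
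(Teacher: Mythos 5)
Your reduction to a relative handle decomposition of $R=\overline{\Sigma\smallsetminus S}$ has a genuine gap at the step ``no 2-handles appear (as $R$ has non-empty boundary on the outer side).'' For the cobordism $(R;\partial_i R,\partial_o R)$ to be built from a collar of $\partial_i R$ by 1-handles alone, you need \emph{every component} of $R$ to meet $\partial\Sigma$: a component assembled from a collar plus bands always retains non-empty free (outgoing) boundary, so a component of $R$ lying entirely in the interior of $\Sigma$ forces at least one 2-handle, i.e.\ a disk cap, which is not a band attachment. And $\pi_1$-injectivity does not exclude such components. For instance, take $\Sigma$ of genus two with one boundary circle and $S$ an annular neighborhood of an essential separating curve: $S$ is $\pi_1$-injective, but one component of $R$ is a once-punctured torus disjoint from $\partial\Sigma$, and it cannot be expressed as bands attached along $\partial S$. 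Your injectivity argument only rules out disk components of $R$ with boundary on $\partial S$; it says nothing about higher-genus or planar interior components, which is where the argument actually breaks. The condition you need is the non-isolating-type condition that every complementary region meets $\partial\Sigma$, and that is not part of the conclusion of Theorem \ref{thmlyon} that you invoke.

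The paper avoids this issue by extracting the band structure directly from the construction rather than from the embedding a posteriori: in Theorem \ref{thmqpgraph} (used in the proof of Theorem \ref{thmlyon}), $S$ is realized as the ribbon $R(G)$ of a Legendrian graph $G$, $G$ is then included into the 1-skeleton $\Gamma$ of a contact cell decomposition, and the page is $R(\Gamma)$. The handle decomposition of $R(\Gamma)$ induced by the vertices and edges of $\Gamma$ exhibits $R(\Gamma)$ as $R(G)\cong S$ together with the ribbons of the additional edges (with any new vertex disk absorbed into an adjacent band), so the page is obtained from $S$ by attaching bands by construction. To repair your write-up, either argue from this ribbon/1-skeleton structure as the paper does, or prove separately that the realization you use can be chosen so that every component of $\Sigma\smallsetminus S$ meets $\partial\Sigma$.
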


We introduce the following notion, which measures how far a null-homologous link in a 3-manifold is from being fibered, and pose some questions:
\begin{definition}
Given a null-homologous link $K\subset Y$, we define \emph{fiber depth} of $K$ to be the number of bands one must attach to a Seifert surface for $K$ to obtain a page of an open book for $Y$, minimized over all Seifert surfaces for $K$ and open books for $Y$.
\end{definition}

\begin{question}
Is there a relationship between fiber depth and Morse-Novikov number? What is the relationship between the fiber depth and the depth of a knot, where the latter is the notion studied by Gabai \cite{gabai}?

\end{question}

\begin{question}
The knot Floer homology of a link in the 3-sphere in an extremal Alexander grading is rank 1 if and only if the link is fibered \cite{linkgenus}, which is equivalent to the fiber depth being zero. 
Is there a more general relationship between the rank of knot Floer homology in an extremal Alexander grading and the fiber depth? Juh\'asz \cite{juha} has established some relationship between depth and the rank of this summand.
\end{question}


\subsection{Acknowledgements} We thank John Baldwin, Kyle Hayden, Matthew Hedden and Marc Kegel for some interesting conversations. 

\section{Contact geometric background}
\label{sec:background}

\subsection{Legendrian Ribbons and quasipositive surfaces}

We collect various notions and results about graphs and surfaces in contact 3-manifolds which are well known to experts, although we could not find a reference for Theorem \ref{thmlyon} in the literature.

Open book decompositions were first studied from the perspective of contact geometry by Thurston and Winkelnkemper \cite{thurwink}, who introduced the following notion:
\begin{definition}
An open book $(B,\pi)$ \emph{supports} a contact manifold $(Y,\xi)$ if there exists a 1-form $\alpha$ s.t. $ker(\alpha) = \xi$ such that
\begin{itemize}
\item $\alpha|_B >0$, i.e. $\alpha$ specifies the orientation of $B$,
\item $d\alpha$ restricts to an area form on every page of the open book.
\end{itemize}
\end{definition}
Away from a neighborhood of the binding, the contact planes can be made arbitrarily close to the tangencies of the pages of the open book. \cite{thurwink} proved that every contact structure on a closed 3-manifold admits a supporting open book.

Giroux \cite{giroux} introduced the following notions:

\begin{definition}
A \emph{contact cell decomposition} of $(Y,\xi)$ is a CW-decomposition of $Y$ satisfying
\begin{itemize}
\item the 1-skeleton is a Legendrian graph,
\item each 2-cell is convex, attached along a $tb= -1$ unknot, and
\item each 3-cell is a Darboux ball.
\end{itemize}
\end{definition}

\begin{definition}
Let $G\subset (Y,\xi)$ be a Legendrian graph. A Ribbon $R(G)$ of $G$ is a smoothly embedded surface in $Y$ satisfying
\begin{itemize}
\item
$G$ is contained in $R(G)$,
\item
there is a contact form $\alpha$ whose Reeb vector field is positively transverse to $R(G)$, and
\item
there exists a vector field on $R(G)$ 
directing the characteristic foliation, positively transverse to $\partial R(G)$, whose inverse flow specifies a retraction of $R(G)$ onto $G$.
\end{itemize}
\end{definition}

Every contact 3-manifold admits a contact cell decomposition.
Giroux proved that a ribbon of the 1-skeleton of any contact cell decomposition is a page of an open book supporting the underlying contact structure. 

The following definition and result are due to Honda \cite{contact1}:
\begin{definition}
Let $\Sigma$ denote a connected surface with boundary. A graph $G$ in the interior of the surface is said to be \emph{non-isolating in $\Sigma$} if every component of $\Sigma\smallsetminus G$ meets $\partial \Sigma$.
\end{definition}

\begin{lemma}
\label{LRP}
If $G$ is a non-isolating graph in the page of an open book $\pi^{-1}(0)=\Sigma$, 
then $\Sigma$ can be isotoped to make $G$ Legendrian so that a neighborhood of $G$ in $\Sigma$ is a ribbon $R(G)$.
\end{lemma}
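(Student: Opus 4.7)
The plan is to view the page $\Sigma$ as a convex-like surface whose role of dividing set is played by the binding $B=\partial\Sigma$, and to adapt Honda's Legendrian realization principle to this setting. Fix a contact form $\alpha$ compatible with the open book; then $R_\alpha$ is positively transverse to the interior of $\Sigma$, and a collar of $\Sigma$ in $Y$ is a piece of the mapping torus in which $\xi$ is fully recorded by the Liouville 1-form $\alpha|_\Sigma$ on $\Sigma$. Consequently, an isotopy of Liouville 1-forms on $\Sigma$ (compatible with its orientation and boundary behavior) can be promoted, via a Moser-type stability argument in the collar, to an ambient isotopy of $\Sigma$ in $Y$ that preserves $\xi$.

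First I would extend $G$ to a spine of $\Sigma$. Because every component of $\Sigma \ssm G$ meets $\partial\Sigma$, one can enlarge $G$ by attaching properly embedded arcs running from $G$ to $\partial\Sigma$, together with extra arcs killing any residual genus, to obtain a graph $G^+\supseteq G$ such that $\Sigma$ deformation retracts onto $G^+$ and $\Sigma\ssm G^+$ is a disjoint union of open collars of $B$. The non-isolating hypothesis is precisely what permits this construction: an isolating component of $\Sigma \ssm G$ would be a closed subsurface with no outward direction to the binding.

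Next I would choose a Liouville 1-form $\beta$ on $\Sigma$ whose Liouville skeleton is $G^+$. Since the space of Liouville forms with a prescribed spine as skeleton is nonempty, and the space of Liouville forms compatible with the orientation of $\Sigma$ is path-connected, I may interpolate between $\alpha|_\Sigma$ and $\beta$. Promoting this interpolation to an ambient isotopy of $\Sigma$ as in the first paragraph, I obtain a new embedded page on which the characteristic foliation has skeleton $G^+$. In particular the edges of $G \subseteq G^+$ are flow lines or singular points of the characteristic foliation, so $G$ is tangent to $\xi$ and hence Legendrian.

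Finally, taking $R(G)$ to be a small regular neighborhood of $G$ in the new $\Sigma$, the three ribbon axioms hold by construction: $G\subset R(G)$; $R_\alpha$ is positively transverse to $R(G)$ since $R(G)\subset\Sigma$; and the Liouville vector field of $\beta$ directs the characteristic foliation on $R(G)$, is outward-transverse to $\partial R(G)$, and retracts $R(G)$ onto $G$ under its inverse flow. The main technical obstacle is the Moser-stability step promoting the Liouville interpolation on $\Sigma$ to an ambient isotopy preserving $\xi$; this is the open-book analogue of the discretization lemma underlying Honda's original argument for convex surfaces, and requires care near the binding where the mapping torus model degenerates.
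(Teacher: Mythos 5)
Your overall strategy (make $G$ part of the skeleton of a Liouville structure on the page and deform Liouville structures) is a reasonable alternative to the paper's argument, but the step you defer as a ``technical obstacle'' is in fact the whole difficulty, and as stated it fails. In an invariant collar of the page the contact form is $dt+\beta$ with $\beta=\alpha|_\Sigma$, and any isotopy of $\Sigma$ supported in that collar (equivalently, any surface transverse to the Reeb direction, i.e.\ a graph $t=u(x)$) induces the $1$-form $\beta+du$ on the page. So a collar Moser/Gray argument can only change the Liouville form by an \emph{exact} term: if some cycle $\gamma\subset G^{+}$ has $\int_\gamma\beta\neq 0$, no such isotopy makes $G$ tangent to the kernel, and there is no reason these periods vanish. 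This is not an issue localized near the binding; it is the statement that the page, viewed as a surface with transverse contact vector field, has \emph{empty} dividing set, and Giroux flexibility/Legendrian realization genuinely requires a nonempty dividing set (the function $f$ vanishing along $\Gamma$ in the invariant model $f\,dt+\beta$ is what provides the flexibility). Your claim that the space of Liouville forms is path-connected is true (it is convex), but path-connectedness of the forms does not give an ambient isotopy of $\Sigma$ in $(Y,\xi)$ realizing the endpoint.

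The paper avoids this by doubling the page: the Heegaard surface $F$ obtained from two pages is convex with dividing set the binding (Torisu), $\Sigma$ is the positive region, and a graph that is non-isolating in $\Sigma$ is non-isolating in $F$ with respect to $\Gamma=B$, so Honda's Legendrian Realization Principle applies directly; transversality of the contact vector field to both $F$ and $\xi$ along $G$ then identifies the contact and surface framings, giving the ribbon. If you want to salvage your Liouville approach, the fix is global rather than collar-local: run the Thurston--Winkelnkemper/Giroux construction with a Liouville form whose skeleton contains $G^{+}$ (after arranging the monodromy to be exact for it), invoke uniqueness up to isotopy of contact structures supported by a fixed open book, and apply Gray stability on all of $Y$ to transport the page and $G$ back to $\xi$. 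That would be a genuinely different, and considerably heavier, proof than the two-line appeal to Torisu and the LRP.
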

\begin{proof}
The Heegaard surface $F$ obtained by gluing two pages of an open book together can be made convex, with dividing set the binding \cite{torisu}. We identify the page $\Sigma\subset F$ as the positive region. The Legendrian Realization Principle then says that $F$ can be isotoped through convex surfaces to make $G\subset \Sigma \subset F$ Legendrian. Moreover, the contact vector field of $F$ is positively transverse to both $TF$ and $\xi$ along $G$, hence the contact and surface framings of $G$ agree.
\end{proof}

For a non-isolating link $K\subset \Sigma$, the produced Legendrian representative is unique up to Legendrian isotopy. This is stated and proven for Legendrian knots in Section 2.2 of \cite{LOSS}, but their argument generalizes to the case of links in a straightforward manner.

\begin{theorem}
\label{uniqueleg}
Let $(B,\pi)$ be an open book supporting $(Y,\xi)$, and
$K$ be a non-isolating link in a page of the open book $\pi^{-1}(0)=\Sigma$. The smooth isotopy class of $K$ in $\Sigma$ uniquely determines a Legendrian link in $(Y,\xi)$ up to Legendrian isotopy.
\end{theorem}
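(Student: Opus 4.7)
The existence half of the theorem is exactly Lemma \ref{LRP}, so I focus on uniqueness, mirroring the argument given for knots in Section 2.2 of \cite{LOSS} and checking that each step carries over to multi-component links. Throughout, view the page $\Sigma$ as the positive region of the convex Heegaard surface $F = \Sigma \cup_B (-\Sigma)$ from the proof of Lemma \ref{LRP}, whose dividing set is the binding $B$. Because every component of $\Sigma \smallsetminus K$ meets $B$, every component of $F \smallsetminus K$ meets the dividing set, and $K$ is non-isolating on $F$ itself; applying Honda's Legendrian Realization Principle to the entire multicurve $K$ then yields a Legendrian representative $L \subset F$ whose contact and surface framings agree.

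To prove uniqueness, let $K_0, K_1 \subset \Sigma$ be two non-isolating, smoothly isotopic representatives with Legendrian realizations $L_0, L_1$. I pick a smooth isotopy $\{K_t\}_{t \in [0,1]}$ in $\Sigma$ and, by a standard transversality argument, arrange that the non-isolating condition fails only at finitely many times $t_1 < \cdots < t_n$. On each subinterval between consecutive bad times, a parametric version of LeRP applied to the fixed convex surface $F$ promotes the smooth isotopy into an ambient Legendrian isotopy in $(Y,\xi)$. The passage from knots to links is purely cosmetic here, as both Honda's LeRP and its parametric refinements are stated for arbitrary (non-isolating) multicurves on a convex surface.

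At each bad time $t_i$, one component $\Delta$ of $\Sigma \smallsetminus K_{t_i}$ has become disjoint from $\partial \Sigma$. Following \cite{LOSS}, this is handled locally: attach a trivial bypass in a collar of $\Sigma$ along a Legendrian arc running from $\Delta$ back to $B$. Such a bypass leaves the dividing set on $F$ unchanged and does not alter the Legendrian isotopy class of $L_{t_i}$, so the Legendrian isotopy can be continued across $t_i$. Concatenating these pieces over all $t_i$ produces the required Legendrian isotopy from $L_0$ to $L_1$. The technical heart of the argument is the parametric LeRP together with the bypass step at isolating moments; both are already present in the knot case, and the extension to links requires no new ideas.
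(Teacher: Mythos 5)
Your proposal never engages the actual content of the theorem, and the one phenomenon it does treat in detail cannot occur. (For context: the paper does not write out a proof of Theorem \ref{uniqueleg}; it observes that the uniqueness argument given for knots in Section 2.2 of \cite{LOSS} goes through for links verbatim. So the relevant comparison is with that argument.)

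First, the ``bad times'' you devote most of the argument to do not exist. If $\{K_t\}_{t\in[0,1]}$ is a smooth isotopy of embedded multicurves in $\Sigma$, the isotopy extension theorem gives an ambient isotopy of $\Sigma$ (fixing $\partial\Sigma$ setwise) carrying $K_0$ to $K_t$; this carries components of $\Sigma\smallsetminus K_0$ to components of $\Sigma\smallsetminus K_t$ and preserves which components meet $\partial\Sigma$. Hence if $K_0$ is non-isolating, every $K_t$ is non-isolating, and there are no times at which the condition fails. The transversality claim, and the subsequent ``trivial bypass'' step -- which is in any case unjustified as written (no reason is given why such an attachment exists, leaves the dividing set unchanged, and does not change the Legendrian isotopy class, nor what it would accomplish) -- are addressing a nonexistent difficulty and are not part of the argument in \cite{LOSS}.

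Second, once the spurious bad times are removed, your entire proof of uniqueness consists of invoking ``a parametric version of LeRP.'' That is precisely the content to be proved. Lemma \ref{LRP} (Honda's Legendrian Realization Principle) is a statement about a single non-isolating multicurve on a fixed convex surface; neither the paper nor \cite{honda} provides a parametric version for families, and asserting that ``parametric refinements are stated for arbitrary multicurves'' is not a citation one can point to. The substance of the uniqueness argument in \cite{LOSS} is exactly the upgrade of a smooth isotopy in the page to a Legendrian isotopy, carried out by working inside an $I$-invariant neighborhood of the convex surface $F$ and using Giroux flexibility for characteristic foliations adapted to the fixed dividing set $B$; that is the step you would need to spell out (and check that it applies to multicurves, which it does, since flexibility and LeRP are stated for non-isolating collections) rather than take as a black box. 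As written, the proposal assumes the theorem in the guise of the parametric LeRP and spends its effort on a vacuous case analysis, so it has a genuine gap.
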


Following Rudolph \cite{qprudolph}, we introduce the following notions:
\begin{definition}
A Seifert surface $S$ for a link $K\subset (Y,\xi)$ is \emph{quasipositive} if it is ambient isotopic to a $\pi_1$-injective subsurface of a page of an open book supporting $\xi$. 
$K$ is said to be \emph{strongly quasipositive} if it admits a quasipositive Seifert surface.
\end{definition}

\begin{theorem}
\label{thmqpgraph}
A Seifert surface $S \subset (Y,\xi)$ is quasipositive if and only if it can be realized as a ribbon of some Legendrian graph.
\end{theorem}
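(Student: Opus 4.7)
The plan is to prove the two directions separately, using the Legendrian realization principle in one and an extension to a contact cell decomposition in the other.

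For the ``ribbon $\Rightarrow$ quasipositive'' direction, given $S = R(G)$ for a Legendrian graph $G \subset (Y,\xi)$, I would extend $G$ to a Legendrian graph $\tilde G$ that forms the $1$-skeleton of a contact cell decomposition of $(Y,\xi)$. Such an extension is standard in contact geometry: one appends Legendrian arcs (always available locally in a Darboux chart) and then adds convex $2$-cells attached along $tb = -1$ unknots so that the complement decomposes into Darboux balls. By Giroux's theorem, $R(\tilde G)$ is then a page of an open book supporting $\xi$. To verify $\pi_1$-injectivity of $R(G) \subset R(\tilde G)$, note that both ribbons deformation retract onto their underlying graphs by definition, so the induced map on $\pi_1$ factors as
\[
\pi_1(R(G)) \cong \pi_1(G) \hookrightarrow \pi_1(\tilde G) \cong \pi_1(R(\tilde G)).
\]
The middle inclusion is injective because the inclusion of a subgraph into a graph always induces an injection on fundamental groups (graph fundamental groups are free, and enlarging the graph only adds generators to a spanning-tree presentation).

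For the ``quasipositive $\Rightarrow$ ribbon'' direction, suppose $S \subset \Sigma = \pi^{-1}(0)$ is $\pi_1$-injective in a page of an open book supporting $(Y,\xi)$. I would choose a spine $G \subset S$ so that $S$ is a regular neighborhood of $G$ in $\Sigma$, and then apply Lemma \ref{LRP} to isotope $\Sigma$ through convex surfaces, making $G$ Legendrian with $R(G) = S$. This works immediately when $G$ is non-isolating in $\Sigma$.

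The main obstacle is the case where $G$ is isolating, i.e.\ some component $C$ of $\Sigma \setminus G$ is trapped away from $\partial \Sigma$ -- note $\pi_1$-injectivity forbids such a component from being a disk but not from existing. To handle this, I would positively stabilize the open book, which preserves $\xi$ by the Giroux correspondence, along arcs $a \subset \Sigma$ with $\partial a \subset \partial \Sigma$ that route from the boundary-adjacent region through $C$ and back. In the new page $\Sigma'$, the plumbed Hopf band contributes a strip connecting $C$ to the component of $\Sigma \setminus G$ containing $\partial \Sigma$. Iterating finitely many times yields a page in which $G$ is non-isolating, after which Lemma \ref{LRP} realizes $S$ as the ribbon of the now-Legendrian $G$. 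The most delicate technical point is ensuring that the regular neighborhood of $G$ in $\Sigma'$ still recovers $S$ rather than a larger subsurface; this should follow by arranging the stabilization arcs to meet $S$ only within a thin collar of $\partial S$ that can be absorbed into an ambient isotopy restoring $S$ itself.
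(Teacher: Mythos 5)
Your ``ribbon $\Rightarrow$ quasipositive'' direction is essentially the paper's argument (extend $G$ to the $1$-skeleton of a contact cell decomposition and use that a subgraph includes $\pi_1$-injectively), and your extra detail there is fine. The problem is in the ``quasipositive $\Rightarrow$ ribbon'' direction. You are right to flag the isolating case: $\pi_1$-injectivity of $S\subset\Sigma$ only rules out disk components of $\Sigma\smallsetminus S$, not components disjoint from $\partial\Sigma$ (for instance, if $\Sigma$ has genus two and one boundary component and $S$ is the closure of the complement of a once-punctured torus $C$ pushed into the interior, then $S$ is $\pi_1$-injective while $C$ avoids $\partial\Sigma$, so any spine of $S$ is isolating). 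The paper does not stabilize at all; it simply asserts that the spine is non-isolating and applies Lemma \ref{LRP}.

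However, your proposed repair does not work. A positive stabilization is a Murasugi sum with a Hopf band plumbed along a square two of whose opposite sides lie on $\partial\Sigma$; abstractly, the new page $\Sigma'$ is obtained from $\Sigma$ by attaching a $2$-dimensional $1$-handle along two intervals of the binding, and the monodromy is composed with a right-handed Dehn twist about the circle formed by your arc $a$ together with the core of that handle. The interior of $\Sigma$ is untouched: routing $a$ through the isolated region $C$ only changes which curve gets twisted, it does not attach any ``strip'' to $C$, because the handle is necessarily attached along $\partial\Sigma$, which lies in a different component of $\Sigma\smallsetminus G$. Hence $C$, whose frontier lies entirely on $G$ in the interior of the page, persists as a component of $\Sigma'\smallsetminus G$ disjoint from $\partial\Sigma'$, and $G$ remains isolating after any number of such stabilizations; Lemma \ref{LRP} is still inapplicable, and your final ``delicate technical point'' about recovering $S$ never even arises. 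So as written your argument only establishes the theorem when the spine is non-isolating, which is exactly the situation the paper already handles; covering the isolating case would require a genuinely different idea (e.g.\ changing the embedding of the pair $(S,\Sigma)$ by an ambient isotopy, or working with a different page), not a stabilization of the open book along boundary-based arcs.
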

\begin{proof}
Suppose $S$ is a quasipositive surface. By definition, we can assume that $S$ is a $\pi_1$-injective subsurface of $\Sigma$, a page of an open book supporting $\xi$. Choose a graph $G\subset S$ which $S$ retracts onto. By construction, $G$ is not isolating in $\Sigma$ and made Legendrian by Lemma \ref{LRP}, so that the contact and surface framings on $G$ agree.  Any ribbon $R(G)$ is then ambiently isotopic to $S$.

For the reverse direction, begin with a Legendrian graph $G$. Include $G$ into the 1-skeleton $\Gamma$ of a contact cell decomposition. By construction, a ribbon $R(G)$ is then contained as a $\pi_1$-injective subsurface of $R(\Gamma)$ - a page of an open book supporting the contact structure. Thus, any ribbon of a Legendrian graph is a quasipositive surface.
\end{proof}

As observed in \cite{BCV}, every Seifert surface $S\subset Y$ is the ribbon of a Legendrian graph in \emph{some} contact structure on $Y$, this allows us to generalize a classical result of \cite{Lyon}.
\begin{proof}[Proof of Theorem \ref{thmlyon}]
Fix a contact structure $\xi$ on $Y$. Choose a graph $G\subset S$ which the surface retracts onto. Let $\Gamma$ denote a Legendrian realization of $G$ within $(Y,\xi)$. 
A decomposition of $\Gamma$ into vertices and edges induces an obvious handle decomposition of a ribbon $R(\Gamma)$; we refer to the 1-handles of the ribbon as \emph{bands}.
$R(\Gamma)$ differs from $S$ by some number of positive or negative twists along the bands. To add a negative twist along a band of $R(\Gamma)$, we apply a Legendrian stabilization to the corresponding edge of $\Gamma$. To add a positive twist, we change the contact structure to an overtwisted one. By taking the contact connected sum of $Y$ with an overtwisted 3-sphere, and Legendrian connected summing an edge of $\Gamma$ with a $tb=0$ unknot, we add a positive twist to the corresponding band.

The latter statement follows by Theorem \ref{thmqpgraph}.

\end{proof}

\subsection{Transverse links and braids about open books}

Bennequin \cite{ben} studied transverse links in the tight 3-sphere via braids. His ideas have been fully generalized and used to study transverse links in arbitrary contact manifolds.


A link is \emph{braided} about an open book if it lies in the complement of the binding and is positively transverse to the pages; such a link intersects each page of the open book in a fixed number of points called the \emph{braid index}. 
A link braided about an open book is naturally transverse to the supported contact structure. Two such links are \emph{braid isotopic} if they are isotopic through braided links; braid isotopic links are transversely isotopic.


There is a notion of positive Markov stabilization for braids with respect to an arbitrary open book.
This operation increases the braid index by one, but preserves the transverse isotopy class of the braid. The following is a generalization of the transverse Markov theorem of Wrinkle \cite{wrinkle}, and independently Orekov and Shevchishin \cite{orevkov}:

\begin{theorem} \cite{pav}
\label{thm:markov}
Suppose $(B,\pi)$ is an open book supporting $(Y,\xi)$. Every transverse link in $(Y,\xi)$ is transversely isotopic to a braid with respect to $(B,\pi)$.
Moreover,
two links braided about an open book $(B,\pi)$ are transversely isotopic if and only if 
they are braid isotopic after some number of positive Markov stabilizations.
\end{theorem}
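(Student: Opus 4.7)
The plan is to prove the two statements separately, using an adapted contact form $\alpha$ with $\alpha|_B > 0$ and $d\alpha$ restricting to an area form on every page, guaranteed by the supporting hypothesis. For the existence of a braided representative, I would start with a transverse link $T \subset (Y,\xi)$ and first perturb $T$ transversely to be disjoint from the binding $B$; since $B$ has codimension two, a generic transverse isotopy suffices. I would then appeal to a Thurston--Winkelnkemper type normal form adapted to $(B,\pi)$, so that on $Y \setminus B$ the Reeb vector field of $\alpha$ is positively transverse to the pages of $\pi$. Because transversality to $\xi$ is an open condition and $d\pi$ is positive on the Reeb direction, every sufficiently small transverse perturbation of $T$ becomes positively transverse to all pages of $\pi$ away from the binding, yielding a braided representative.

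For the ``moreover'' part, I would take a transverse isotopy $T_t$, $t \in [0,1]$, between two braided links $T_0$ and $T_1$ about $(B,\pi)$, and apply standard genericity to arrange that $T_t$ is a braid about $(B,\pi)$ for all but finitely many critical times $t_1 < \cdots < t_n$, at each of which exactly one of the following occurs: (i) an arc of $T_t$ develops a Morse-type tangency with some page of $\pi$, or (ii) an arc of $T_t$ passes through the binding $B$. Between consecutive critical times, the restricted isotopy is through braids, so the endpoints are braid isotopic. For critical times of type (i), a local analysis shows that the tangency corresponds to a finger move that can be realized by a braid isotopy on either side, and therefore does not change the braid class modulo Markov stabilization. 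For critical times of type (ii), the local model of a transverse arc crossing the binding introduces or removes one intersection with a nearby page: positivity of $\alpha$ along both the link and the binding rigidly forces this new intersection to be a \emph{positive} strand, so the event is precisely a positive Markov stabilization or destabilization.

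The main obstacle is the case (ii) analysis: one must verify that no transverse 1-parameter family can cross the binding in a way that produces a negative or otherwise non-standard local model. Concretely, in a standard tubular neighborhood of $B$ with coordinates $(r,\theta,z)$ in which $\alpha = dz + r^2 d\theta$, I would parametrize the local move by a smooth path of transverse embeddings and show that the sign conditions $\alpha(\dot\gamma) > 0$ and $\alpha|_B > 0$ together determine the local picture up to transverse isotopy fixing the boundary of a small ball, matching the local model of a positive Markov stabilization. Making the genericity used at the start of the ``moreover'' part rigorous is also nontrivial; this requires transversality in the space of smooth maps from $\bigsqcup S^1$ into $Y$ constrained to be positively transverse to $\xi$, and an argument that codimension-one strata in this space are exhausted by the two local events (i) and (ii).
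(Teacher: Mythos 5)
This theorem is not proved in the paper at all: it is quoted from Pavelescu \cite{pav} (generalizing Wrinkle and Orevkov--Shevchishin), so there is no in-paper argument to compare against; your proposal is an attempt to reprove that cited result, and it has genuine gaps. The first is in the existence part: the claim that ``every sufficiently small transverse perturbation of $T$ becomes positively transverse to all pages away from the binding'' is false. The transversality condition $\alpha(\dot\gamma)>0$ places no constraint on the sign of $d\pi(\dot\gamma)$; a transverse link can contain whole arcs that travel \emph{negatively} through the pages (already in the standard open book for $(S^3,\xi_{std})$), and this is an open condition, so no small perturbation removes it. Braiding genuinely requires a global transverse isotopy that eliminates negative intersections with the pages -- this Bennequin/Alexander-type argument is the actual content of the first half of Pavelescu's theorem, and it is exactly what your sketch skips.

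The second gap is in the ``moreover'' direction. Your genericity claim -- that a generic transverse isotopy $T_t$ is braided for all but finitely many times, with only tangencies to pages (i) and binding crossings (ii) as critical events -- does not hold: since ``having an arc negatively transverse to the pages'' is an open condition, an intermediate link $T_t$ can fail to be braided for an entire interval of times, so the non-braided locus is not exhausted by codimension-one strata and your event-by-event analysis never gets started. Even at an honest tangency of type (i), the assertion that it ``can be realized by a braid isotopy on either side'' is unjustified; resolving such tangencies while preserving both transversality to $\xi$ and positivity with respect to the pages is precisely where stabilizations can be forced. The known proofs (Wrinkle, Orevkov--Shevchishin, Pavelescu) instead braid every intermediate transverse link and then control how the resulting braidings change under small changes of the link, or use auxiliary machinery such as characteristic/open book foliations on cobounding surfaces; some substitute for that comparison step is needed for your outline to become a proof. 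Your local model at the binding for case (ii), by contrast, is essentially right and is the easy part of the argument.
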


Braid isotopy classes of links braided about the trivial
 open book for the 3-sphere, having braid index $n$, can be identified with conjugacy classes in the Artin braid group $B_n$, which is isomorphic to the mapping class group  of a disk with marked points \[Mod(D^2\smallsetminus \{p_1,\dots,p_n\},\partial D^2).\] We think of the marked points as intersections between the braided link and a fixed page of the open book. This idea generalizes to braids about arbitrary open books.

 If $K$ is braided about $(B,\pi)$ with index $n$, then the braid isotopy class of $K$ is equivalent to a conjugacy class of an element \[g\in Mod(\Sigma\smallsetminus \{p_1,\dots,p_n\},\partial \Sigma),\] where $\Sigma = \pi^{-1}(0)$ is a page of the open book. We may take the \emph{closure} of $g$ to recover $K$. Note that the forgetful map $Mod(\Sigma\smallsetminus \{p_1,\dots,p_n\},\partial \Sigma)\to Mod(\Sigma,\partial \Sigma)$ must send $g$ to the open book monodromy.

Let $\gamma\subset \Sigma \smallsetminus \{ \{p_1,\dots,p_n\}\cup \partial \Sigma\}$ denote a properly embedded arc connecting two marked points $p_i\ne p_j$. 
We let $\sigma_\gamma$ denote the \emph{right-hand half twist about} $\gamma$; $\sigma_\gamma$ has support in a small neighborhood of $\gamma$, see Figure \ref{fig:halftwist}. 
If $\delta$ is a simple closed curve in $\Sigma \smallsetminus \{ \{p_1,\dots,p_n\}\cup \partial \Sigma\}$, we let $\tau_\delta$ denote a \emph{right-hand Dehn twist about} $\delta$. 
Mapping class groups of surfaces with marked points are generated by Dehn twists and half twists. 

We will also make use of push-maps. Let $r$ denote an oriented properly embedded arc $\gamma\subset \Sigma \smallsetminus \{ \{p_1,\dots,p_n\}\cup \partial \Sigma\}$ having endpoints at a single point $p\in P$. 
Let $\delta_1\cup \delta_2$ denote the oriented boundary of a small annular neighborhood of $r$, where the orientation of $\delta_1$ agrees with that of $r$.
The \emph{push-map} of $p$ along $r$, denoted $\pi_r$, is the composition $\tau_{\delta_1} \circ \tau_{\delta_2}^{-1}$. 
This map has a simple interpretation: the oriented arc $r$ specifies a movie of how the component of the closure of $\pi_r$ corresponding to the puncture $p$ intersects the pages of the open book as one flows through the fibration. If one forgets the basepoint $p$, then the right and left-hand Dehn twists cancel, so introduction of push-maps does not change the underlying open book.

\begin{figure}[h]
\def\svgwidth{140pt}
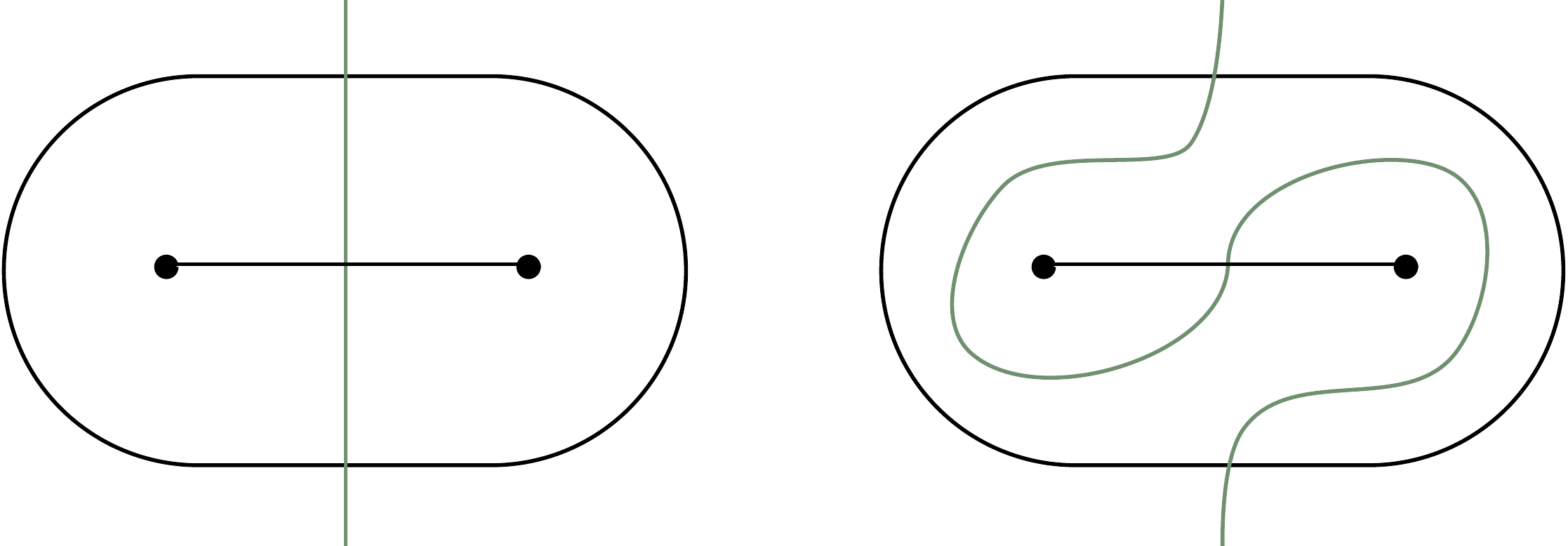
\caption{The right-hand half twist $\sigma_\gamma$ is supported in a neighborhood of the horizontal arc $\gamma$ and is determined by where it sends the vertical arc.}
\label{fig:halftwist}
\end{figure}

\section{Boundary compatible links and transverse representatives, uniqueness}
\label{boundarycomp}
\begin{definition}
\label{boundary}
An oriented link $L\subset \Sigma$ is \emph{boundary compatible} if there exist a collection of \emph{auxiliary arcs} - disjoint, properly embedded, and oriented arcs $\{\alpha_i\}\subset \Sigma\smallsetminus L$, one for each component $L_i\subset L$, satisfying 
\begin{itemize}
\item
the tangent vectors to $\partial \Sigma$ and $\alpha_i$, in that order, determine the orientation of $\Sigma$, and
\item
the tangent vectors to $L_i$ and $\alpha_i$, in that order, determine the orientation of $\Sigma$.
\end{itemize}
\end{definition}


Boundary compatibility is a strictly weaker condition than being non-isolating. In particular, boundary compatible links can be isolating, and arise naturally as boundaries of quasipositive surfaces. The main goal of this section is to prove Theorem \ref{uniquetransapprox}. 


\begin{remark}
\label{flexible}
Given a boundary compatible link $L\subset \Sigma$, we will braid the link, making it transverse in the process. Alternatively, one can apply Giroux flexibility \cite{giroux} to perturb $\Sigma$ so that the characteristic foliation, and hence $\xi$, is positively transverse to $L$. The two resulting transverse representatives can be shown to agree. Choosing the foliation so that each of the auxiliary arcs is contained in a leaf, our braiding procedure can be performed via transverse isotopy supported in a small neighborhood of $\Sigma$.
\end{remark}

Let $L$ denote a boundary compatible link in the page of an open book $\Sigma = \pi^{-1}(0)$. 
A choice of auxiliary arcs guide an isotopy of the link which braids it about the open book in the following way.
The arc $\alpha_i$ guides an isotopy of the component $L_i$, so that a sub-arc $\beta_i$ now lies in a collar neighborhood of the boundary which is fixed by the open book monodromy $\phi$, as in Figure \ref{fig:transapprox}. Let $K_i$ denote the result of this isotopy.

\begin{figure}[h]
\def\svgwidth{100pt}
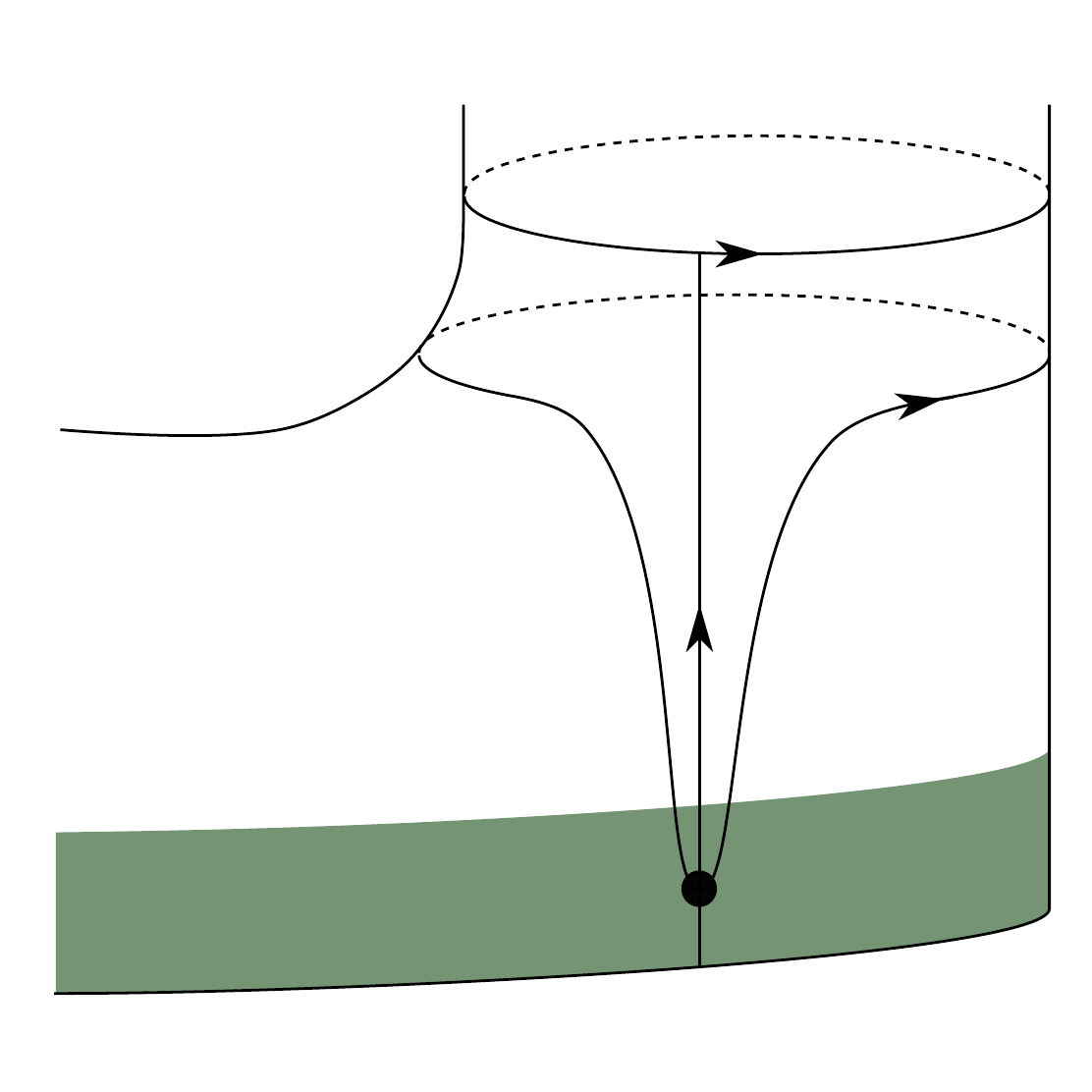
\caption{A collar neighborhood of $\partial \Sigma$ is shaded.}
\label{fig:transapprox}
\end{figure}

Let $K_i'$ denote the connect sum of $K_i$, along $\beta_i$ with a small positively braided meridian unknot for the binding of the open book. $K_i'$ is isotopic, in the complement of $\partial \Sigma$, to a knot $K''_i$ which is braided about the open book with index one.

It is easy to specify a pointed monodromy having closure $K''_i$.
Introduce a single marked point $p = \alpha_i\cap \beta_i$ and compose the open book monodromy with a push-map of $p_i$ along $K_i$, i.e. consider
$g_i = \phi\circ \pi_{K_i}$.
The closure of $g_i$ is braid isotopic to $K_i''$. Performing this procedure once for every component of $L$ realizes the entire link as a braid having index equal to the number of its components.
The construction of the push map $\pi_{K_i}$ depended on the arc $\{\alpha_i\}$ from the binding of the open book to $L_i$, so we introduce the notation $\pi _{\alpha_i}:=\pi_{K_i}$.

Suppose $L$ is boundary compatible. a choice of auxiliary arcs $\{\alpha_i\}$ gives rise to a braiding $\phi \circ \prod_i \pi_{\alpha_i}$
having transverse closure $T(L,\{\alpha_i\})$.
The resulting transverse representative of $L$ is actually independent of the choice of auxiliary arcs, up to transverse isotopy:

\begin{proposition}
\label{uniquetrans}
Suppose $(B,\pi)$ is an open book supporting a contact manifold $(Y,\xi)$, and $L$ a boundary compatible link in a page $\Sigma = \pi^{-1}(0)$. The smooth isotopy class of $L$ in $\Sigma$ uniquely determines a transverse link $T(L)$ in $(Y,\xi)$ up to transverse isotopy.
\end{proposition}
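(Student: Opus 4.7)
The plan is to show that the braid monodromies $\phi\circ\prod_i\pi_{\alpha_i}$ obtained from two admissible systems of auxiliary arcs are conjugate in the mapping class group of $\Sigma$ with marked points. By Theorem \ref{thm:markov}, since the braid indices agree, conjugate braids have transversely isotopic closures, and the proposition follows.

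Since auxiliary arcs for distinct components are pairwise disjoint, the push-maps $\pi_{\alpha_i}$ for different $i$ have disjoint supports and commute up to isotopy, so I may vary the arcs one component at a time. Thus assume $\{\alpha_i\}$ and $\{\alpha_i'\}$ differ only in $\alpha_1$. Any two admissible choices of $\alpha_1$ are related by a finite sequence of elementary moves: (i) proper isotopy rel endpoints within $\Sigma\smallsetminus(L\cup\bigcup_{j\geq 2}\alpha_j)$; (ii) sliding the endpoint on $L_1$ once around the cyclic component $L_1$; (iii) sliding the endpoint on $\partial\Sigma$ along its boundary component; (iv) jumping the boundary endpoint to a different component of $\partial\Sigma$. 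That these moves suffice is a standard arc-connectivity argument in the punctured, bounded surface.

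For move (i), the push-map $\pi_{\alpha_1}$ is literally unchanged, since it depends only on the based isotopy class of the guide arc $K_1$. For moves (ii) and (iii), the replacement realizes $\pi_{\alpha_1'}=c\pi_{\alpha_1}c^{-1}$ for a mapping class $c$ supported in a small collar neighborhood of $L_1\cup\partial\Sigma$. The support of $c$ can be chosen disjoint from the supports of the other $\pi_{\alpha_j}$, and $c$ fixes $\partial\Sigma$ pointwise and therefore commutes with $\phi$ up to isotopy; hence the entire braid monodromy is conjugated by $c$.

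The main obstacle is move (iv): when the boundary endpoint jumps to a different binding component, the marked point $p_1$ lands in a different collar and no single conjugator in the mapping class group of $\Sigma$ is available. I would resolve this either combinatorially, by explicitly constructing a transverse isotopy of the closure that slides the braided-meridian portion of the link along the page from one binding component to another while carrying the remaining strands in the complement, or more cleanly by invoking Giroux flexibility as in Remark \ref{flexible}: perturb $\Sigma$ through convex surfaces so that its characteristic foliation becomes positively transverse to $L$ with each auxiliary arc sitting on a single leaf. The associated transverse representative then lives in an arbitrarily small neighborhood of $\Sigma$ and is manifestly independent of the choice of arcs up to transverse isotopy. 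This second approach has the further virtue of proving the equivalence of the braided and Giroux-flexibility constructions asserted in Remark \ref{flexible}.
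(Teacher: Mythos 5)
There is a genuine gap, and it lies at the heart of your strategy. You propose to show that the two pointed monodromies $\phi\circ\prod_i\pi_{\alpha_i}$ and $\phi\circ\prod_i\pi_{\alpha_i'}$ are \emph{conjugate}, so that the closures are braid isotopic. This is unjustified and, in general, should be false: two braidings of the same transverse link with the same braid index need not be conjugate, and the relation between the braidings coming from two different auxiliary arcs is genuinely of Markov type. The paper's proof (Lemma \ref{disjoint}) does not produce a conjugator; it positively Markov \emph{stabilizes} $\phi\circ\pi_\alpha$ along an arc $\gamma$ joining the two marked points, proves the identity of pointed mapping classes $\pi_\alpha\circ\sigma_\gamma\simeq\sigma_\gamma\circ\pi_{\alpha'}$ by checking the action on a cut system of arcs, and then conjugates and destabilizes; Theorem \ref{thm:markov} is what makes the stabilization harmless. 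Your plan has no mechanism to absorb this stabilization, so the cases you label (ii)--(iv) cannot be closed by conjugation alone. (Incidentally, the commutation claim you use there — that a class $c$ supported near $L_1\cup\partial\Sigma$ commutes with $\phi$ — is also false, since $\phi$ need not preserve $L_1$; but conjugation is in any case permissible without it, since closures depend only on conjugacy classes.)

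A second, independent problem is your list of elementary moves. Two auxiliary arcs with the same endpoints can wind differently through the topology of $\Sigma\smallsetminus L$ (around genus, other boundary components, or other components of $L$), and no sequence of isotopies rel endpoints and endpoint slides around $L_1$ or $\partial\Sigma$ relates them; so the ``standard arc-connectivity'' you invoke does not hold in the form stated. The paper replaces this step by Lemma \ref{sequence}: any two auxiliary arcs are joined by a chain of auxiliary arcs in which consecutive arcs are disjoint (imported from Lemma 4.3 of \cite{braiddynamics} after capping off), and then only the disjoint case needs the stabilization argument of Lemma \ref{disjoint}. Finally, your fallback via Giroux flexibility simply asserts that the resulting transverse representative is ``manifestly independent of the choice of arcs,'' which is precisely the well-definedness being proved; Remark \ref{flexible} records that the two constructions agree, but the paper's actual proof of Proposition \ref{uniquetrans} goes through the braiding/stabilization argument, and an appeal to the remark here would be circular.
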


The proof of Proposition \ref{uniquetrans} relies on the following two lemmas:
\begin{lemma}
\label{disjoint}
Let $L\subset \Sigma = \pi^{-1}(0)$ be a boundary compatible knot. Suppose $\alpha,\alpha'\subset \Sigma\smallsetminus L$ are two disjoint properly embedded auxiliary arcs. Then $T(L,\alpha)$ and $T(L,\alpha')$ are transversely isotopic.
\end{lemma}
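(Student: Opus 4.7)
The plan is to exhibit both $T(L,\alpha)$ and $T(L,\alpha')$ as positive Markov stabilizations of a common $2$-braid about $(B,\pi)$, and then invoke the generalized transverse Markov theorem (Theorem \ref{thm:markov}) to conclude transverse isotopy.

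Since $\alpha$ and $\alpha'$ are disjoint auxiliary arcs in $\Sigma\smallsetminus L$, the two pre-braiding isotopies of $L$ that they guide have disjoint supports and may be performed simultaneously. This produces a knot $K\subset \Sigma$, smoothly isotopic to $L$, with two disjoint sub-arcs $\beta,\beta'$ lying in the collar neighborhood of $\partial \Sigma$ near the feet of $\alpha$ and $\alpha'$ respectively. Connect-summing $K$ with small positively-braided meridian unknots of the binding at both $\beta$ and $\beta'$, and then isotoping the result in the complement of the binding into a braid (as in the single-arc construction), produces a $2$-braid $\tilde{K}$ about $(B,\pi)$ that smoothly represents $L$. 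The orientation conditions in Definition \ref{boundary} ensure that each meridian summand may be chosen to be positively (rather than negatively) braided.

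The key claim is that $\tilde{K}$ is a positive Markov stabilization of $T(L,\alpha)$, and symmetrically of $T(L,\alpha')$. Geometrically, inserting a small positively-braided meridian of the binding into an existing braid at a location near the boundary is precisely the local model for positive Markov stabilization in the open book setting: at the level of pointed monodromies it corresponds to introducing a new marked point $p'$ in the collar and composing with a positive half-twist along an arc joining $p'$ to the nearest strand of the existing braid. Comparing this with the pointed monodromy $\phi\circ\pi_\alpha$ of $T(L,\alpha)$ shows that forgetting the $\beta'$-summand of $\tilde{K}$ recovers the monodromy and hence the transverse isotopy class of $T(L,\alpha)$; the argument for $T(L,\alpha')$ is identical, with the roles of $\alpha$ and $\alpha'$ interchanged.

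Applying Theorem \ref{thm:markov} along the chain $T(L,\alpha)\sim \tilde{K}\sim T(L,\alpha')$ then yields the desired transverse isotopy. The main obstacle is the geometric identification of the meridian-summand operation with a positive Markov stabilization, which requires a careful local inspection of the collar region and of the way the orientations prescribed by Definition \ref{boundary} pin down the sign of the stabilization. Once this local identification is in hand, the conclusion follows formally from the generalized transverse Markov theorem.
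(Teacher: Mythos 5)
Your overall strategy is the same as the paper's: exhibit a single $2$-braid that is a positive Markov stabilization of both $T(L,\alpha)$ and $T(L,\alpha')$ and then quote Theorem \ref{thm:markov}. Indeed, your $\tilde{K}$ is (up to braid isotopy) exactly the closure of the stabilized pointed monodromy the paper produces. The problem is that the pivotal claim --- that $\tilde{K}$ destabilizes positively to $T(L,\alpha)$, and symmetrically to $T(L,\alpha')$ --- is asserted rather than proved, and it is precisely where all the work in this lemma lives; you acknowledge this yourself by calling it ``the main obstacle.'' The heuristic you offer does not apply as stated: the second meridian summand is not inserted into an existing braid by a short arc in the collar joining $p'$ ``to the nearest strand,'' because the feet of $\alpha$ and $\alpha'$ on $L$ may be far apart, so after braiding, the band joining the new strand to the old one runs parallel to an entire segment of $L$ and may be topologically nontrivial in $\Sigma$. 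One must therefore actually verify (i) that the relevant half-twist is positive, which is an orientation check you only gesture at via Definition \ref{boundary}, and (ii) that deleting either meridian strand recovers the pointed monodromy $\phi\circ\pi_\alpha$ (resp.\ $\phi\circ\pi_{\alpha'}$) up to conjugation --- equivalently, a mapping-class identity identifying the monodromy of $\tilde{K}$ with $\phi\circ\pi_\alpha\circ\sigma_\gamma$ and with a conjugate of $\phi\circ\pi_{\alpha'}\circ\sigma_\gamma$ for a suitable arc $\gamma$ from $p$ to $p'$.

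That identity is the actual content of the paper's proof: after arranging the arcs to meet $L$ at a common point $e$ and taking $\gamma$ to be a push-off of a segment of $\alpha\cup\alpha'$, the paper stabilizes $\phi\circ\pi_\alpha$ to $\phi\circ\pi_\alpha\circ\sigma_\gamma$, conjugates and stabilizes $\phi\circ\pi_{\alpha'}$ to $\phi\circ\sigma_\gamma\circ\pi_{\alpha'}$, and then proves $\pi_\alpha\circ\sigma_\gamma\simeq\sigma_\gamma\circ\pi_{\alpha'}$ by checking the action on a collection of arcs cutting a neighborhood of $L\cup\alpha\cup\alpha'$ (with the two marked points) into a disk, as in Figure \ref{fig:arcinv2}. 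Some such explicit local computation --- for instance, an Alexander-method check that the monodromy of your $\tilde{K}$ agrees with the stabilized monodromies on a cut system of arcs in a neighborhood of $L\cup\alpha\cup\alpha'$ --- is needed to close your argument; without it, the ``common stabilization'' step is a restatement of the lemma rather than a proof of it.
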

\begin{proof}

It is clear from the construction that isotoping an auxiliary arc through auxiliary arcs has no effect on the resulting push-map
as an element of the pointed mapping class group, 
and hence no effect on the transverse isotopy class of the closure.

After possibly applying such as isotopy to $\alpha$, we may assume that the arcs $\alpha$ and $\alpha'$ meet at a point $\{e\} = \alpha\cap \alpha'\in L$. Let $p$ (respectively $p'$) denote the marked point introduced in the construction of $\pi_{\alpha}$ (respectively $\pi_{\alpha'}$). Let $\gamma \subset \Sigma\smallsetminus \{p,p'\}$ be the embedded arc from $p$ to $p'$ obtained by pushing a segment of $\alpha\cup\alpha'$ off of $L$, see Figure \ref{fig:arcinvt}.

\begin{figure}[h]
\def\svgwidth{130pt}
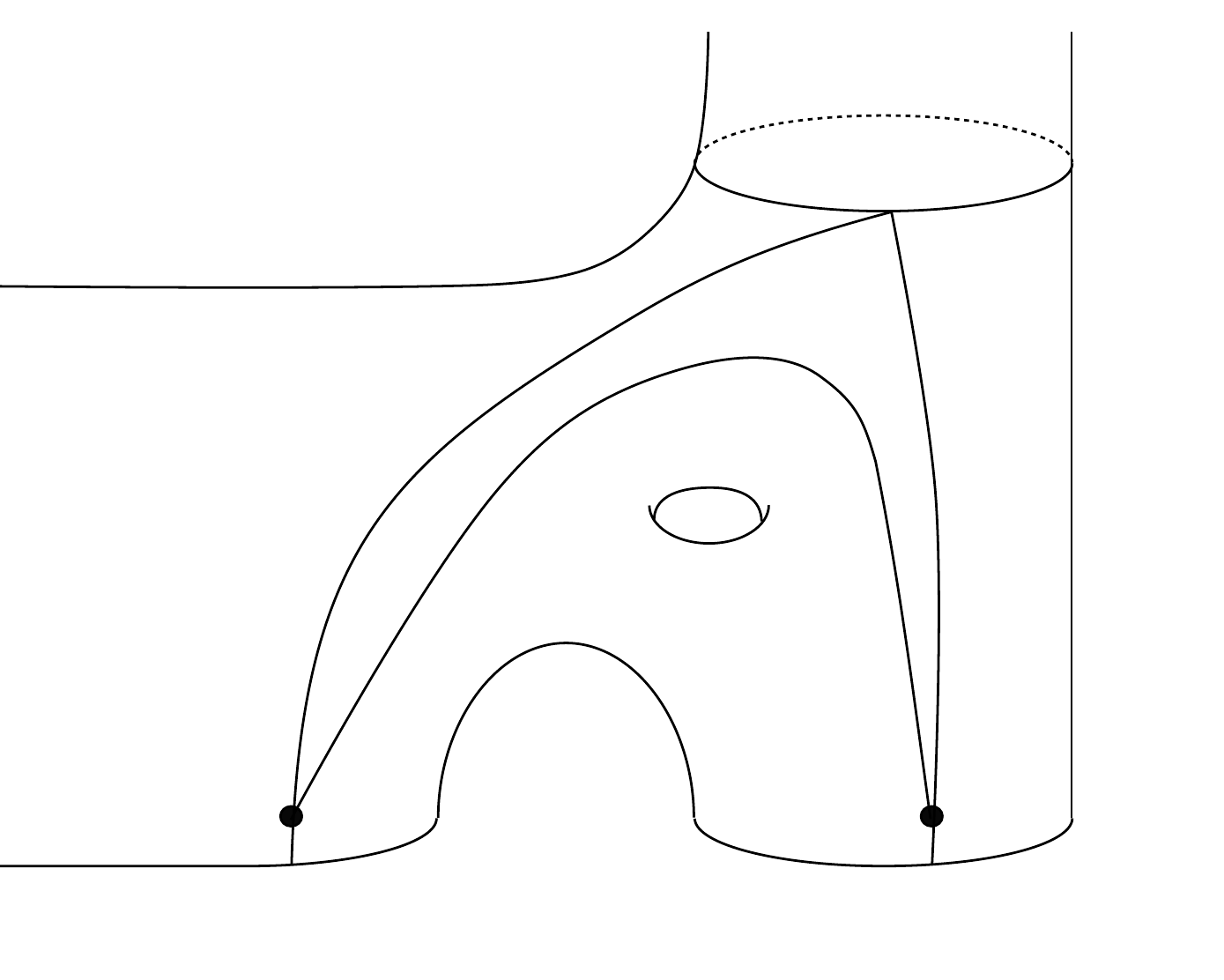
\caption{}
\label{fig:arcinvt}
\end{figure}

By Theorem \ref{thm:markov}, positive Markov stabilizations have no effect on the transverse closure of a pointed monodromy. 
Let $\phi$ denote the open book monodromy. By construction, $T(L,\alpha)$ is the transverse closure of $\phi\circ \pi_\alpha$ and $T(L,\alpha')$ is the transverse closure of $\phi \circ \pi_{\alpha'}$.
Markov stabilizing $\phi\circ \pi_\alpha$ using the arc $\gamma$ we obtain $\phi\circ \pi_\alpha\circ \sigma_\gamma$.

Conjugation has no effect on the braid isotopy class, hence no effect on the transverse isotopy class.
By conjugating, Markov stabilizing, and conjugating again we may pass from $\phi \circ \pi_{\alpha'}$ to $\phi \circ \sigma_\gamma \circ \pi_{\alpha'}$.

Let $S$ denote a neighborhood of $L\cup \alpha\cup \alpha'$, equipped with the two marked pointed $p$ and $p'$. The monodromies $\pi_\alpha \circ \sigma_{\gamma}$ and $\sigma_\gamma \circ \pi_{\alpha '}$ are supported in $S$, we claim they are isotopic. Consider the three properly embedded arcs $\{\beta_1,\beta_2,\beta_3\}$ in $S\smallsetminus \{p\cup p'\}$  pictured on the right side of Figure \ref{fig:arcinv2}. 
The pointed monodromies $\pi_\alpha \circ \sigma_{\gamma}$ and $\sigma_\gamma \circ \pi_{\alpha '}$ 
act identically, up to isotopy, on the $\beta$-arcs; their images are pictured on the right of Figure \ref{fig:arcinv2} as well. Since the complement of these arcs is a disk, the claim follows.

\begin{figure}[h]
\def\svgwidth{250pt}
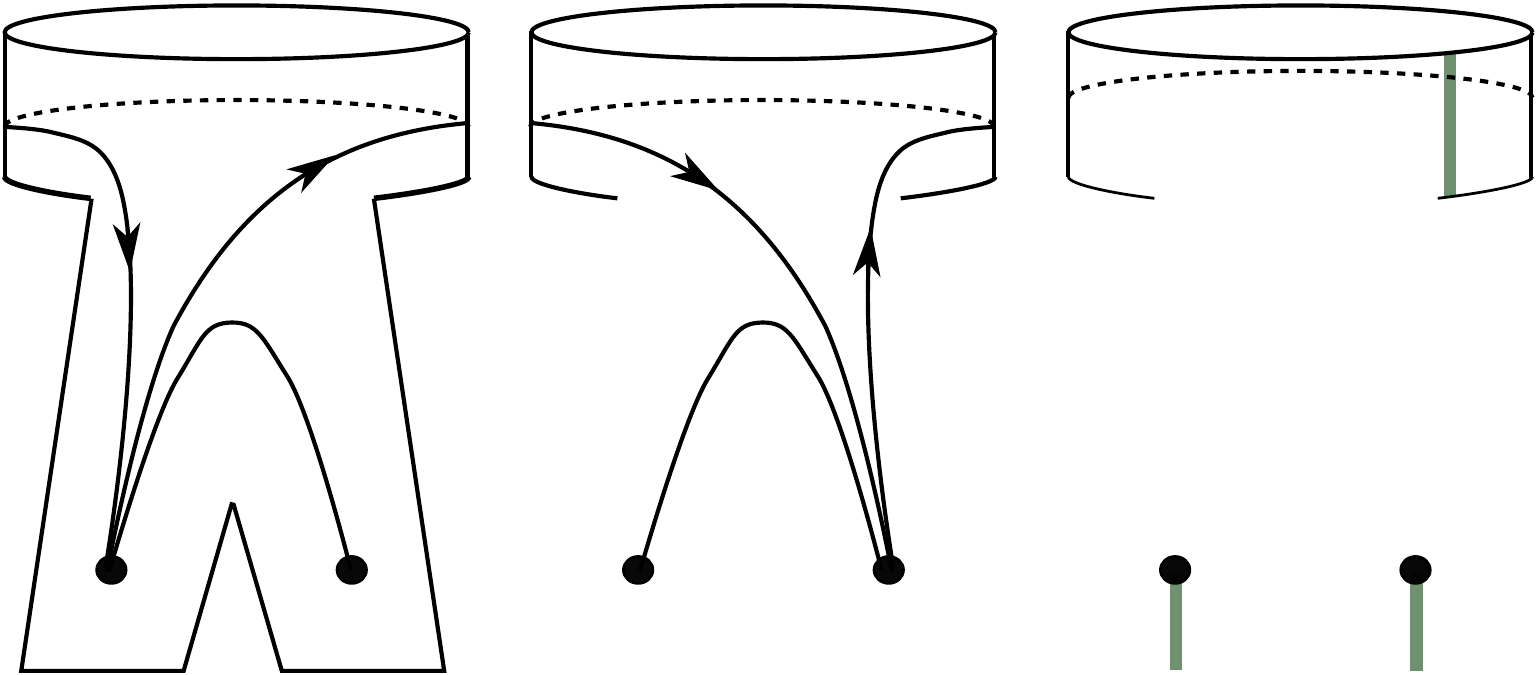
\caption{On the far right, the set of arcs $\{\beta_i\}$ is lightly shaded.}
\label{fig:arcinv2}
\end{figure}

In summary, $T(L,\alpha)$ is transversely isotopic to the closure of $\phi\circ \pi_\alpha\circ \sigma_\gamma$, which is braid isotopic to the closure of $\phi \circ \sigma_\gamma \circ \pi_{\alpha'}$, which itself is transversely isotopic to $T(L,\alpha')$.


\end{proof}

\begin{lemma}
\label{sequence}
Let $L\subset \Sigma = \pi^{-1}(0)$ be a boundary compatible knot. Suppose $\alpha,\alpha'\subset \Sigma\smallsetminus L$ are auxiliary arcs. 
There exists a sequence of auxiliary arcs 
\[
\alpha = a_0,a_1,\dots,a_m = \alpha '
\]
such that any two consecutive arcs $a_{j}$ and $a_{j+1}$ can be made disjoint via isotopy through auxiliary arcs.
\end{lemma}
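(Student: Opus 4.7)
We induct on the transverse intersection number $n := |\alpha \cap \alpha'|$, after perturbing the arcs to be in general position. The base case $n=0$ is immediate: the arcs are already disjoint, so $m=1$ suffices with $a_0 = \alpha$ and $a_1 = \alpha'$. For $n\geq 1$, the strategy is to produce an intermediate auxiliary arc $\beta$ which admits (via isotopy through auxiliary arcs) a representative disjoint from $\alpha$, and whose intersection number with $\alpha'$ is at most $n-1$. Then the inductive hypothesis applied to $(\beta, \alpha')$ produces a sequence, which we prepend to $\alpha$.

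The construction of $\beta$ is via arc-surgery at a carefully chosen intersection point. Let $p \in \alpha \cap \alpha'$ be the point with smallest parameter along $\alpha'$ measured from its $\partial\Sigma$-endpoint, and write $\alpha = \alpha^- \cup \alpha^+$ and $\alpha' = \alpha'^- \cup \alpha'^+$ for the corresponding splittings at $p$. By the choice of $p$, the subarc $\alpha'^-$ meets $\alpha$ only at $p$. Define $\beta$ to be the concatenation of $\alpha'^-$ with a small parallel pushoff of $\alpha^+$, joined by a short smoothing arc near $p$ drawn on whichever side of $\alpha$ the arc $\alpha'^-$ approaches $p$ from. Finally, slide the $L$- and $\partial\Sigma$-endpoints of $\beta$ slightly along $L$ and $\partial\Sigma$ respectively, so that they differ from the endpoints of $\alpha$. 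One checks that $\beta$ is embedded (since $\alpha'^- \cap \alpha^+ = \{p\}$ and a sufficiently small pushoff of $\alpha^+$ is disjoint from $\alpha'^-$), that $\beta$ is auxiliary (the orientation conditions at the two endpoints are inherited from $\alpha'$ and $\alpha$, respectively, and are preserved under the endpoint slides since they are open conditions), and that $\beta$ is disjoint from $\alpha$ as a consequence of the pushoff choice. The intersection count with $\alpha'$ is controlled as follows: perturbing $\alpha'^-$ slightly off $\alpha'$ incurs no intersections, while the pushoff of $\alpha^+$ meets $\alpha'$ precisely in the points of $\alpha \cap \alpha'$ lying in the interior of $\alpha^+$, at most $n-1$ points. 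Thus $|\beta \cap \alpha'| \leq n-1$.

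Applying the inductive hypothesis to the pair $(\beta, \alpha')$ yields a sequence $\beta = b_0, b_1, \dots, b_{m-1} = \alpha'$; prepending $\alpha$ gives the required $\alpha = a_0,\ a_1 = \beta,\ a_2 = b_1,\ \dots,\ a_m = \alpha'$, with consecutive pairs admitting disjoint representatives through auxiliary isotopies. The principal technical point is verifying that the arc-surgery produces an embedded auxiliary arc that can simultaneously be isotoped off $\alpha$ without leaving the class of auxiliary arcs; this is exactly why we take $p$ to minimize the $\alpha'$-parameter (so that $\alpha'^-$ is already disjoint from $\alpha$) and why we choose the pushoff side of $\alpha^+$ to match the side from which $\alpha'^-$ approaches $p$ (so the smoothing arc need not cross $\alpha$).
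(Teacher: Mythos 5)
Your argument is correct, but it takes a genuinely different route from the paper. You give a self-contained induction on the geometric intersection number $|\alpha\cap\alpha'|$, resolving the intersection point closest to the $\partial\Sigma$-endpoint of $\alpha'$ by an arc surgery that splices the initial segment of $\alpha'$ onto a pushoff of the terminal segment of $\alpha$; the key points you need to (and do) check are that the resulting arc is embedded, inherits the two orientation conditions of Definition \ref{boundary} at its two endpoints (from $\alpha'$ at $\partial\Sigma$ and from $\alpha$ at $L$, both open conditions), is disjoint from $\alpha$ by the choice of smoothing side, and meets $\alpha'$ strictly fewer times. This is the classical surgery argument for connectivity of arc systems, and it works here because both endpoints' local models are copied from arcs already known to be auxiliary, so the side-of-$L$ constraint is automatic. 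The paper instead reduces the statement to an existing result: it caps off the relevant complementary region of $L$ to produce a surface $S$ with connected boundary $-L$ and marked points replacing $\partial\Sigma$, observes that auxiliary arcs correspond exactly to properly embedded arcs in $S$ from $\partial S$ to a marked point (the capping encodes the orientation/side conditions), and then cites Lemma 4.3 of \cite{braiddynamics} for the interpolating sequence with consecutive arcs disjoint along their interiors. Your approach buys a direct, elementary proof with no external input; the paper's buys brevity, reuses machinery already set up for the comultiplication results, and packages the "auxiliary" conditions once and for all via the capping construction rather than re-verifying them at each surgery. Either way the conclusion is the same, since your consecutive arcs are constructed literally disjoint, which is stronger than the disjointness-up-to-isotopy the lemma requires.
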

\begin{proof}
We construct a surface with with connected boundary $-L$ in the following way.
If $L$ is separating, consider the connected component of $\Sigma \smallsetminus L$ containing $\partial \Sigma$. By capping off each component of $\partial \Sigma$ with a disk containing a marked point, we obtain a surface $S$ with connected boundary $-L$.
If $L$ is non-separating, we cap off all boundary components of $\Sigma \smallsetminus L$ in the same manner except the one identified with $-L$, and let $S$ denote the resulting surface.
Let $\{p_1,\dots,p_n\}$ denote the resulting marked points.

Under this procedure, an auxiliary arc $c$ becomes a properly embedded arc $r(c)$ 
connecting $-L = \partial S$ to a point of $\{p_1,\dots,p_n\}$.
Given such an arc $r(c)$, we may reverse the procedure and recover $c$ up to isotopy through auxiliary arcs.
Two auxiliary arcs $a$ and $a'$ can be made disjoint via isotopy through auxiliary arcs if the arcs $r(a)$ and $r(a')$ can be made disjoint along their interiors via relative isotopy.

Isotope the auxiliary arcs $\alpha$ and $\alpha'$ to meet at a point $\{e\}\in L$.
By Lemma 4.3 of \cite{braiddynamics} there exists a sequence of properly embedded arcs 
\[
r(\alpha) = r(a_0),r(a_1),\dots,r(a_m) = r(\alpha')
\]
in $S\smallsetminus \{p_1,\dots,p_n\}$ connecting $\partial S$ to a point of $\{p_1,\dots,p_n\}$ so that any two consecutive arcs $r(a_j)$ and $r(a_{j+1})$ are disjoint along their interiors. 
Reversing the procedure above, this sequence gives rise to the sequence of auxiliary arcs $\alpha = a_0,a_1,\dots,a_m = \alpha '$ desired.
\end{proof}

\begin{proof}[Proof of Proposition \ref{uniquetrans}]

Let $\phi$ denote the monodromy of an open book $(B,\pi)$ having page $\pi^{-1}(0) = \Sigma$.
Let $L= L_1\cup\dots\cup L_n\subset \Sigma$ be a boundary compatible link equipped with two sets of auxiliary arcs $\{\alpha_i\}$ and $\{\alpha_i '\}$. 
$T(L,\{\alpha_i\})$ is defined to be the transverse closure of $\phi \circ \prod_{i=1}^{n} \pi_{\alpha_i}$, and $T(L,\{\alpha_i '\})$ the transverse closure of $\phi \circ \prod_{i=1}^{n} \pi_{\alpha_i '}$. We claim these are transversely isotopic. Let 
\[
\alpha_n = a_0,\dots,a_m=\alpha_n '
\]
denote the sequence of auxiliary arcs from $L_n$ to $\partial \Sigma$ specified by Lemma \ref{sequence}.

As in the proof of Lemma \ref{disjoint}, 
by applying sequence of conjugations and positive Markov (de)stabilizations we pass from \[\phi \circ \prod_{i=1}^{n} \pi_{\alpha_i} =\phi\circ \pi_{\alpha_1}\circ\dots\circ \pi_{\alpha_{n-1}}\circ \pi_{a_0}\hspace{1cm}  \text{to} \hspace{1cm} \phi\circ \pi_{\alpha_1}\circ\dots\circ \pi_{\alpha_{n-1}} \circ \pi_{a_1}.\] Repeating this procedure a total of $m$ times, we pass to 

\[\phi\circ \pi_{\alpha_1}\circ\dots\circ \pi_{\alpha_{n-1}}\circ \pi_{a_m} = \phi\circ \pi_{\alpha_1}\circ\dots\circ \pi_{\alpha_{n-1}} \circ \pi_{\alpha_n '}.\]

Via conjugations and repetition of the procedure above, we may replace each $\pi_{\alpha_i}$ with $\pi_{\alpha_i '}$ in the monodromy. Since none of these moves have any effect on the transverse isotopy class of the closure, the claim is established.
\end{proof}


We turn to the other part of Theorem \ref{uniquetransapprox}:

\begin{proposition}
\label{approx}
Let $L$ be a non-isolating oriented link in the page $\Sigma = \pi^{-1}(0)$ of an open book $(B,\pi)$. Then
the positive transverse push-off of the Legendrian representative of Theorem \ref{uniqueleg} is transversely isotopic to transverse representative of Proposition \ref{uniquetrans}, i.e. $L_+ = T(L)$.

\end{proposition}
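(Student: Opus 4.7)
The plan is to realize the transverse push-off $L_+$ as the transverse closure of a pointed monodromy of the form $\phi\circ \prod_i \pi_{\alpha_i}$ for a suitable choice of auxiliary arcs $\{\alpha_i\}$. Once this is achieved, Proposition \ref{uniquetrans} identifies this closure with $T(L)$ up to transverse isotopy, which is exactly the statement $L_+ = T(L)$.

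The first step is to set up the local geometric picture. I would apply Lemma \ref{LRP} to realize $L$ as a Legendrian link on $\Sigma$, so that the contact framing of each $L_i$ agrees with the surface framing coming from $\Sigma$. Since $L$ is non-isolating, each component $L_i$ can be joined to $\partial \Sigma$ by a properly embedded arc $\alpha_i\subset \Sigma\ssm L$ satisfying the orientation conditions of Definition \ref{boundary}. Following the viewpoint of Remark \ref{flexible}, I would then use Giroux flexibility to normalize the characteristic foliation on $\Sigma$ so that each auxiliary arc $\alpha_i$ lies in a single leaf and flows from $\partial \Sigma$ into $L_i$; this is compatible with $L$ being Legendrian because $L$ is a closed set of singular leaves of the foliation.

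The second step is to build $L_+$ inside a small collar neighborhood $\Sigma\times (-\epsilon,\epsilon)$ by pushing $L$ off $\Sigma$ along the contact vector field transverse to $\Sigma$, and then to execute a transverse isotopy (supported in this collar) that braids $L_+$ about $(B,\pi)$. Concretely, for each component the isotopy drags the push-off along $\alpha_i$ until it wraps once around the binding via a small positively braided meridian; this is a genuine transverse isotopy precisely because $\alpha_i$ was arranged to lie in a leaf of the characteristic foliation, so the motion can be done within the positively transverse region near $\Sigma$. Tracking marked points (the intersections of the braided $L_+$ with $\Sigma$) through this procedure shows that the resulting braid representative of $L_+$ is the transverse closure of $\phi \circ \prod_i \pi_{\alpha_i}$. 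By Proposition \ref{uniquetrans}, this transverse link is $T(L)$.

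The main obstacle is the bookkeeping in the second step: one must verify that pushing $L$ off $\Sigma$ via the contact vector field yields the \emph{positive} transverse push-off (not the negative one) and that the induced braiding produces exactly the push-map $\pi_{\alpha_i}$ rather than its inverse. Both follow from the orientation conventions built into boundary compatibility, together with the observation that when $L$ is Legendrian on a page of a supporting open book, the positive direction of the Reeb-like flow transverse to $\Sigma$ coincides locally with the direction in which the contact vector field points, so the signs of the push-off and of the braiding push-map line up.
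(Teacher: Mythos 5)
There is a genuine gap at the heart of your argument, and it sits exactly where the real content of the proposition lies. Your second step asserts that pushing the Legendrian realization of $L$ off the page $\Sigma$ along the contact vector field produces the positive transverse push-off $L_+$. This is not true: the flow of a contact vector field is by contactomorphisms, so it carries Legendrian curves to Legendrian curves, and the resulting copy of $L$ on a nearby translate of $\Sigma$ is again (nearly) Legendrian, not transverse. More generally, a curve lying in (or parallel to) a page of a supporting open book is close to being tangent to $\xi$ away from the binding, so neither the contact vector field direction nor the Reeb-like direction transverse to the page turns $L$ into a transverse representative; transversality is only achieved by the braiding itself, i.e.\ by making the link wrap positively around the binding. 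Consequently, the identification of $L_+$ (defined via a standard neighborhood of the Legendrian) with the braid closure of $\phi\circ\prod_i\pi_{\alpha_i}$ is precisely what must be proved, and it cannot be reduced to ``orientation bookkeeping'' as in your final paragraph. Note also that Remark \ref{flexible} only addresses the agreement between the braided representative and the transverse representative of $L$ obtained by Giroux flexibility; it says nothing about the push-off of the Legendrian realization, which is the object in Proposition \ref{approx}.

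For comparison, the paper's proof supplies this missing identification by a stabilization argument rather than a direct isotopy in a collar. One positively stabilizes the open book along an arc $\delta$ encircling a neighborhood of $L\cup\alpha$, producing a new page $\Sigma'$ with stabilization curve $U$ and monodromy $\tau_U\circ\phi$; since $\delta$ is disjoint from the auxiliary arc, $T(L)=T(K)$ for the copy $K$ of $L$ in $\Sigma'$. Sliding $K$ over $U$ gives $K'$, which by Lemma 6.5 of \cite{CCSMCM} is the negative Legendrian stabilization $S_-(K)$ with the parallel binding component $T\subset\partial\Sigma'$ equal to the transverse push-off $K_+=L_+$; braiding that binding component (Section 2.6 of \cite{braiddynamics}) gives $T(K')=T$. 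Finally, $T(K)=T(K')$ is verified by an explicit pointed mapping class group computation: the monodromies $\phi\circ\pi_{\alpha_2}\circ\tau_U$ and a conjugate of $\phi\circ\pi_{\alpha_1}\circ\tau_U$ are shown to act identically on a collection of arcs cutting a pair of pants containing their supports into a disk. If you want to rescue your approach, you would need either an input of this kind (identifying the push-off with a binding component after stabilization) or a careful local model computation in a standard neighborhood of the Legendrian showing that the braided closure of $\phi\circ\prod_i\pi_{\alpha_i}$ is transversely isotopic to $L_+$; as written, that step is asserted rather than established.
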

\begin{proof}
We prove the result in the case that $L$ is a knot. 
Let $\alpha$ denote an auxiliary arc for $L\subset \Sigma$, $N$ a neighborhood of $L\cup \alpha$ in $\Sigma$, and $\delta'$ the boundary component of $N$ meeting $\partial \Sigma$. Let $\delta$ be the closure of the intersection of $\delta '$ with the interior of $\Sigma$. 

\begin{figure}[h]
\def\svgwidth{250pt}
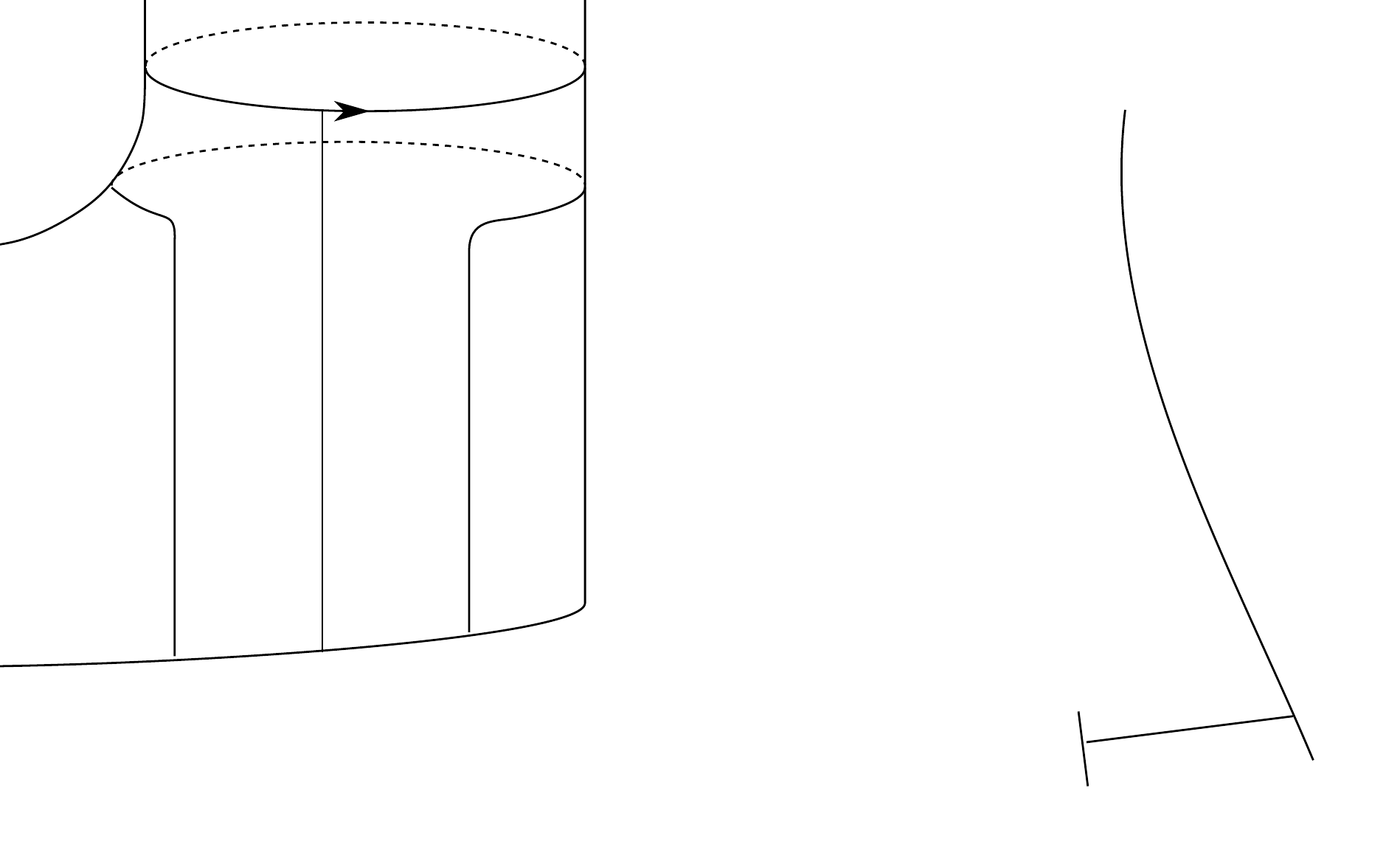
\caption{The page, before and after stabilization. $\Sigma\to \Sigma '$.}
\label{fig:unique}
\end{figure}

The properly embedded arc $\delta$ guides a positive open book stabilization. 
Let $\Sigma '$ denote the new page, obtained by attaching a 1-handle to $\Sigma$ with feet at the endpoints of $\delta$; let $U\subset \Sigma'$ denote the union of $\delta$ with the core of this 1-handle.
The new monodromy $\tau_U\circ \phi$ is obtained by composing the original monodromy $\phi$ with a Dehn twist about $U$.


To avoid confusion, we let $K$ denote a copy of $L$ sitting in $\Sigma'$; $L=K$ as Legendrian knots.
Since the arc $\delta$ is disjoint from the auxiliary arc $\alpha$, it is clear that $T(L)=T(K)$; see for example Corollary 2.5 of \cite{equiv}.

Sliding $K$ over $U$ one obtains $K'$, see the right-hand side of Figure \ref{fig:unique}. It has been observed in Lemma 6.5 of \cite{CCSMCM} that $K'$ is a Legendrian stabilization $S_- (K)$, and in particular that the binding component $T\subset \partial \Sigma '$ parallel to $K'$ is the transverse push-off of $K$. In Section 2.6 of \cite{braiddynamics}, we explain how to transversely isotope a binding component of an open book to obtain an index one braid, our construction shows that $T(K') = T$.


In summary, we wish to establish that $T(L) = L_+$, and we have that
\begin{gather*}
T(L) = T(K) \hspace{1cm} \text{and}\hspace{1cm}T(K') = T = K_+ = L_+.
\end{gather*}
It remains to show that $T(K) = T(K')$. 
Consider the auxiliary arcs $\alpha_1$ and $\alpha_2$ for $K'$ and $K$, respectively, pictured in the right-hand side of Figure \ref{fig:unique}. 
Let $p_i\in \alpha_i$ denote the marked point introduced in the construction of the push-map $\pi_{\alpha_i}$.
$T(K)$ is the closure of $\phi \circ \pi_{\alpha_2}\circ \tau_U \in Mod(\Sigma'\smallsetminus \{p_2\},\partial\Sigma')$, whereas $T(K')$ is the closure of $\phi \circ \pi_{\alpha_1}\circ \tau_U\in Mod(\Sigma'\smallsetminus \{p_1\},\partial\Sigma')$. 

Let $f$ denote a diffeomorphism of $\Sigma '$ which is
isotopic to the identity, sends $p_2$ to $p_1$, and is supported in a neighborhood of the arc $\gamma$, also pictured in Figure \ref{fig:unique}.
We claim that $\phi \circ \pi_{\alpha_1}\circ \tau_U$ and $f\circ \phi \circ \pi_{\alpha_2}\circ \tau_U\circ f^{-1}$ are equal as elements of $Mod(\Sigma'\smallsetminus \{p_1\},\partial\Sigma')$, and hence have braid isotopic closures, implying that $T(K) = T(K')$.
Note that $\phi$ is supported away from $\gamma$, and thus commutes with $f$. It suffices to show that $\pi_{\alpha_1}\circ \tau_U$ and $f\circ \pi_{\alpha_2}\circ \tau_U\circ f^{-1}$ are isotopic.

Consider a pair of pants $P\subset \Sigma '$ obtained by slightly enlarging the region $N\subset \Sigma$ and attaching the 1-handle $\Sigma ' \smallsetminus \Sigma$. The three boundary components of $P$ are parallel to $K, K'$ and $U$. The intersection of $P$ with a collar neighborhood of $\partial \Sigma '$ has two components, an annulus and a disk; see Figure \ref{fig:unique2}. The pointed monodromies $\pi_{\alpha_1}\circ \tau_U$ and $f\circ \pi_{\alpha_2}\circ \tau_U\circ f^{-1}$ are supported in $P$. 

\begin{figure}[h]
\def\svgwidth{400pt}
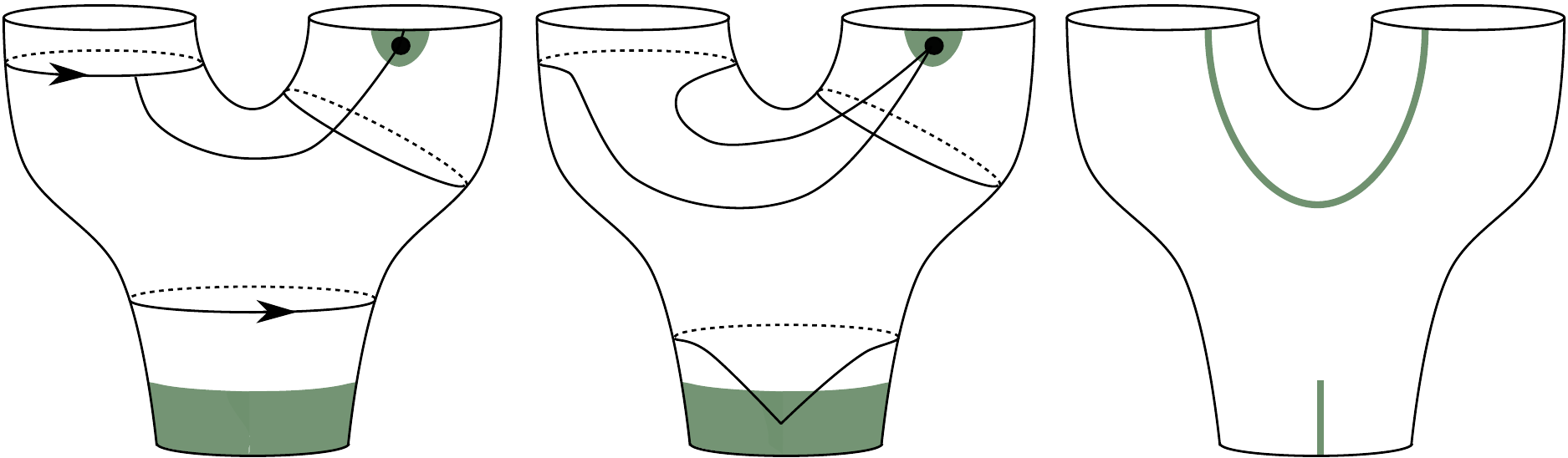
\caption{The pair of pants $P$. Left and center, the intersection of $P$ with a collar neighborhood of $\partial \Sigma '$ is shaded. On the right, the arcs $\{\beta_i\}$ are lightly shaded, their images appear in black.}
\label{fig:unique2}
\end{figure}

The set of properly embedded arcs $\{\beta_1,\beta_2,\beta_3\}\subset P\smallsetminus \{p_1\}$ pictured on the right-hand side of Figure \ref{fig:unique2}, cut the surface into a disk having no marked points.
Both of the pointed monodromies act on the arcs $\{\beta_i\}$ identically, their images are also pictured, hence the monodromies are isotopic and the result follows.

It is clear that one can generalize this argument to the case of a link, since the entire construction occurs in a neighborhood of a component and its auxiliary arc.

\end{proof}

\section{Knot Floer homology and invariants of Legendrian and transverse links}

\subsection{Knot Floer homology}
\label{subsec:hfk}
We fix some notation for knot Floer chain complexes and homology groups \cite{holknots}. We work with $\mathbb{F} = \mathbb{Z}_2$ coefficients throughout this paper.

To a $2n$-pointed Heegaard diagram $\mathcal{H} = (\Sigma,\boldsymbol{\alpha},\boldsymbol{\beta},\bold{w},\bold{z})$, encoding an $m$-component link $L\subset Y$, we associate the \emph{minus} chain complex
$CFK^- (\mathcal{H})$. As an $\mathbb{F} [U_1,\dots,U_n]$-module, this complex is generated by tuples of the form
$[\bold{a},0,\dots,0]$, where
$\bold{a}\in \mathbb{T}_{\boldsymbol{\alpha}}\cap\mathbb{T}_{\boldsymbol{\beta}}$. 
 The action of the $U$-variables is specified by \[U_j \cdot [\bold{a},i_1,\dots,i_j,\dots,i_n] = [\bold{a},i_1,\dots,i_j -1,\dots, i_n].\]  

The differential counts pseudo-holomorphic disks 
\[
\partial ^- [\bold{a},i_1,\dots,i_n] = 
\sum\limits_{ \bold{b}\in \mathbb{T}_{\boldsymbol{\alpha}}\cap\mathbb{T}_{\boldsymbol{\beta} }} \sum\limits_{\substack{\phi\in\pi_2 (\bold{a},\bold{b})\\ \mu(\phi)=1\\  \{n_w(\phi) = 0| w\in\bold{w} \}}}  (\# \widehat{\mathcal{M}}(\phi))  [\bold{b},i_1-n_{z_1}(\phi),\dots,i_n-n_{z_n}(\phi)].
\]
If two basepoints $z_i$ and $z_j$ correspond to the same component of $L$, then the variables $U_i$ and $U_j$ act identically on $HFK^-(Y,L):= H_* (CFK^- (\mathcal{H}))$.
By reindexing the basepoints, we arrange that $z_1,\dots, z_m$ correspond to the $m$ distinct components of the link. $HFK^-(Y,L)$ is then an invariant of the pair $(Y,L)$, well-defined up to $\mathbb{F} [U_1,\dots, U_m]$-module isomorphism. 

Quotienting $CFK^-(\mathcal{H})$ by the images of $\{U_i\}_{i=1}^m$
, we obtain the \emph{hat} complex $\widehat{CFK}(\mathcal{H})$. The homology of this complex, denoted $\widehat{HFK}(Y,L)$, is an invariant of the pair $(Y,L)$ up to $\mathbb{F}$-module isomorphism.



Alternatively, if we quotient $CFK^-(\mathcal{H})$ by $\{U_i\}_{i=1}^n$ to set all of the $U$-variables equal to zero, we obtain the \emph{totally blocked} complex $\widetilde{CFK}(\mathcal{H})$, whose homology $\widetilde{HFK}(\mathcal{H})$ is an invariant of $(Y,L,n)$, where $n$ is the number of basepoint pairs. As $\mathbb{F}$-modules, \[\widetilde{HFK}(\mathcal{H})\simeq \widehat{HFK}(Y,L)\otimes W^{\otimes n-m},\] where $W$ is a 2-dimensional $\mathbb{F}$ vector space. If $Y$ is a $\mathbb{Q}HS^3$ then the two generators of $W$ have Maslov-Alexander bigradings $(0,0)$ and $(-1,-1)$. 
The projection $\widehat{CFK}(\mathcal{H})\to \widetilde{CFK}(\mathcal{H})$ induces an injection $\iota :\widehat{HFK}(Y,L)\xhookrightarrow{} \widetilde{HFK}(\mathcal{H})$.


Finally, quotienting $HFK^-(Y,L)$ by the image of $(U_i - U_j)$, for every $1\le i,j\le m$ has the effect of identifying all of the $U$-variable actions. We let $HFK^{=}(Y,L)$ denote the resulting $\mathbb{F} [U]$-module. 

\subsection{Invariants of Legendrian and transverse links}
\label{subsec:invts}
Baldwin, Vela-Vick and V\'{e}rtesi \cite{equiv} used Theorem \ref{thm:markov} to define an invariant of transverse links in a contact 3-manifold, taking values in the minus and hat versions of knot Floer homology.
To a transverse link $K\subset (Y,\xi)$ they associate classes $t^{\circ}(K)\subset HFK^{\circ}(-Y,K)$, where $\circ \in \{\wedge,-\}$. 

Their construction is reminiscent of Honda, Kazez and Matic's \cite{HKM} reformulation of the Ozsv\'{a}th-Szab\'{o} \cite{contactclass} contact invariant $c(\xi)$.
Picking a pointed monodromy having transverse closure $K$, \cite{equiv} construct a multi-pointed Heegaard diagram for $(-Y,K)$ whose associated knot Floer complex is equipped with a natural generator representing $t^\circ$.
We review their construction.

Let $(B,\pi)$ be an open book supporting $(Y,\xi)$. Let $\Sigma_\theta = \pi^{-1}(\theta)$ be a page. 
Let $K$ be an index $n$ braid about $(B,\pi)$, $K\cap \Sigma_0=\{p_1,\dots,p_n\}$, and let $\phi \in Mod(\Sigma\smallsetminus \{p_1,\dots,p_n\},\partial \Sigma)$ denote a pointed monodromy specifying $K$.

A \emph{basis of arcs} $\{a_i\}_1 ^{m}\subset \Sigma\smallsetminus \{p_1,\dots,p_n\}$ is a collection of properly embedded disjoint arcs which cut $\Sigma\smallsetminus \{p_1,\dots,p_n\}$ into $n$ discs, each containing one marked point. 
Let $\{b_i\}_1^{m}$ be another basis of arcs obtained by perturbing the endpoints of $a_i$ in the oriented direction of $\partial \Sigma$, and isotoping in $\Sigma\smallsetminus \{p_1,\dots,p_n\}$ so that $a_i$ intersects $b_i$ transversely in a single point with positive sign. 
The pointed monodromy $\phi$ together with any basis of arcs specifies a Heegaard diagram $\mathcal{H}_\phi = (S,\boldsymbol{\beta},\boldsymbol{\alpha},\bold{w},\bold{z})$ encoding $(-Y,K)$:
\begin{itemize}
\item
$S=\Sigma_{1/2}\cup -\Sigma_0$
\item
$\alpha_i = a_i\times \{0,1/2\}$, $\beta_i = b_i\times \{1/2\}\cup \phi(b_i)\times \{0\}$
\item
$z_i = p_i \times \{0\}$, $w_i = p_i \times \{1/2\}$
\end{itemize}

For each $i$, $\alpha_i$ intersects $\beta_i$ in a single point in the region $\Sigma_{1/2}$ denoted $x_i$. Let $\bold{x_\phi} \in \mathbb{T}_{\boldsymbol{\beta}} \cap \mathbb{T}_{\boldsymbol{\alpha}}$ denote the generator having component $x_i$ on $\alpha _i$. The class $t^\circ(K)$ is the homology class $[\bold{x}_\phi] \in HFK^\circ (\mathcal{H}_\phi) = HFK^\circ (-Y,K)$.

The proof of invariance utilizes the Giroux correspondence \cite{giroux} and Theorem \ref{thm:markov}. Any two braidings of a transverse link $K$ about two open books are related by a sequence of braid isotopies, positive Markov stabilizations of the braids, and positive Hopf stabilizations of the open books. Each of these moves, along with arc slides relating bases of arcs, give rise to a quasi-isomorphisms of the underlying $CFK^\circ$ complexes preserving the distinguished generator $\bold{x}_\phi$, see \cite{equiv} for details. 

Given the pointed monodromy $\phi$, we can also consider the homology class of $\bold{x}_\phi \in \widetilde{CFK}(\mathcal{H}_\phi)$, denoted $\widetilde{t}(\phi)\in \widetilde{HFK}(\mathcal{H}_\phi)$. The proof of arc-slide invariance for the invariant $t^\circ$ also shows that $\widetilde{t}$ is an invariant of the pointed monodromy $g$. 

\begin{lemma}
\label{tildenonvanishing}
Suppose $g \in Mod(S\smallsetminus P,\partial S)$ has transverse closure $K_g\subset (Y_g,\xi_g)$. Then
\[
\widehat{t}(K_g)\ne 0 \iff \widetilde{t}(g)\ne 0.
\]
\end{lemma}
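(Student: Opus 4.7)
The plan is to observe that both invariants are represented by the same cycle $\mathbf{x}_\phi$ in a Heegaard diagram $\mathcal{H}_\phi$ associated to the pointed monodromy $g$, and then invoke the injection $\iota:\widehat{HFK}(Y_g,K_g)\hookrightarrow\widetilde{HFK}(\mathcal{H}_\phi)$ recalled in Section~\ref{subsec:hfk}.

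First, I would set up the diagram. Picking a basis of arcs for $\Sigma\ssm P$ (where $P$ is the marked point set and $\Sigma$ is the page, playing the role of $S$), the construction in Section~\ref{subsec:invts} produces the multi-pointed Heegaard diagram $\mathcal{H}_g=(S,\boldsymbol{\beta},\boldsymbol{\alpha},\mathbf{w},\mathbf{z})$ for $(-Y_g,K_g)$ together with the distinguished intersection point $\mathbf{x}_g\in\Ta\cap\Tb$. By definition, $\widehat{t}(K_g)=[\mathbf{x}_g]\in\widehat{HFK}(\mathcal{H}_g)$ and $\widetilde{t}(g)=[\mathbf{x}_g]\in\widetilde{HFK}(\mathcal{H}_g)$.

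Next, I would identify the comparison map. The totally blocked complex $\widetilde{CFK}(\mathcal{H}_g)$ is obtained from $\widehat{CFK}(\mathcal{H}_g)$ by the further quotient that sets the remaining $U$-variables (those indexed by $m+1,\dots,n$, where $m$ is the number of components of $K_g$ and $n$ is the braid index) to zero. This quotient is a chain map that carries the cycle $\mathbf{x}_g$ to itself, and so on homology it is exactly the map $\iota:\widehat{HFK}(Y_g,K_g)\hookrightarrow\widetilde{HFK}(\mathcal{H}_g)$ from Section~\ref{subsec:hfk}. Consequently $\iota\bigl(\widehat{t}(K_g)\bigr)=\widetilde{t}(g)$.

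Finally, I would apply injectivity. Since $\iota$ is injective (by the tensor product decomposition $\widetilde{HFK}(\mathcal{H}_g)\cong\widehat{HFK}(Y_g,K_g)\otimes W^{\otimes(n-m)}$ recalled in Section~\ref{subsec:hfk}, where the inclusion is realized by tensoring with a nonzero top generator of $W^{\otimes(n-m)}$), the class $\widehat{t}(K_g)$ vanishes if and only if $\iota(\widehat{t}(K_g))=\widetilde{t}(g)$ vanishes. Both directions of the claimed equivalence follow. There is essentially no obstacle here beyond unwinding definitions; the only mild subtlety is making sure one uses the fact, already recorded in Section~\ref{subsec:hfk}, that the projection $\widehat{CFK}\to\widetilde{CFK}$ really does induce an injection on homology (rather than merely a map whose target splits as a tensor product in some abstract sense).
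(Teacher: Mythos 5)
Your argument is exactly the paper's: the paper's one-line proof invokes the natural injection $\iota:\widehat{HFK}(-Y_g,K_g)\hookrightarrow\widetilde{HFK}(\mathcal{H}_g)$ recalled in Section 4.1 and the fact that it carries $\widehat{t}(K_g)$ to $\widetilde{t}(g)$, which is precisely your observation that the chain-level quotient fixes the distinguished cycle $\mathbf{x}_g$. Your write-up just unwinds the same step in more detail, so it is correct and takes essentially the same approach.
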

\begin{proof}
The natural injection $\iota: \widehat{HFK}(-Y_g,K_g)\xhookrightarrow{} \widetilde{HFK}(\mathcal{H}_g)$ maps $\widehat{t}(K_g)$ to $\widetilde{t}(g)$.
\end{proof}

Two Legendrian links are Legendrian isotopic after some number of negative Legendrian stabilizations if and only if their transverse push-offs are transversely isotopic \cite{transverseapprox}; invariants of transverse links are equivalent to invariants of Legendrian links which are preserved under negative stabilization. For a Legendrian link $L$, we set $t^{\circ}(L):= t^\circ(L_+)$.

There are two other notable invariants of Legendrians taking values in knot Floer homology. Ozsv\'ath, Sz\'abo and Thurston \cite{grid} used grid diagrams to define a combinatorial invariant $\lambda^+$ of Legendrian links in the tight 3-sphere. Lisca, Ozsv\'ath, Sz\'abo and Stipsicz \cite{LOSS} used the Giroux correspondence to define an invariant $\mathcal{L}$ of Legendrian \emph{knots} in arbitrary contact 3-manifolds.
These invariants are equivalent whenever they are defined:

\begin{theorem}\cite{equiv}
\label{equiv}
\begin{enumerate}
\item
If $K\subset (S^3,\xi_{std})$ is an $m$-component Legendrian link, there exists an $\mathbb{F}[U_1,\dots,U_m]$-module automorphism of $HFK^-(-S^3,K)$
sending $\lambda^+ (K)$ to $t^- (K)$. 
\item
If $K\subset (Y,\xi)$ is a Legendrian knot, there exists an $\mathbb{F}[U]$-module automorphism of
$HFK^-(-Y,K)$ sending $\mathcal{L} (K)$ to $t^- (K)$.
\end{enumerate}

\end{theorem}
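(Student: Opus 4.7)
The plan is to identify both $\lambda^+(K)$ and $\mathcal{L}(K)$ with the BRAID generator $\mathbf{x}_\phi$ inside a common knot Floer complex, modulo a sequence of quasi-isomorphisms realizing the claimed module-level automorphisms. In each case, the strategy is to build, from the data defining the competing invariant, a pointed monodromy $\phi$ whose transverse closure represents $K_+$, together with a basis of arcs such that the associated diagram $\mathcal{H}_\phi$ from Section \ref{subsec:invts} agrees with (or differs by an explicit composition of handleslides and basepoint moves from) the Heegaard diagram used to define $\lambda^+$ or $\mathcal{L}$, with the distinguished cycles matching up.

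For part (1), I would begin with a toroidal grid diagram $G$ presenting $K$. A grid diagram is tautologically a multi-pointed Heegaard diagram, and I would reinterpret its torus as the double of an annular page of the trivial open book for $S^3$, with the $O$- and $X$-markings playing the roles of $\mathbf{w}$ and $\mathbf{z}$. The column permutation of the grid determines a pointed monodromy $\phi_G$ whose transverse closure represents $K_+$, and one can write down a basis of arcs for which $\mathcal{H}_{\phi_G}$ coincides with $G$. The GRID generator $\lambda^+$, sitting at the upper-right corners of the $X$-markings, would then be identified with $\mathbf{x}_{\phi_G}$ after recording an explicit composition of handleslides that relates the canonical bases used by each construction.

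For part (2), starting from a Legendrian knot $L$ in a page $\Sigma$ with a LOSS basis of arcs $\{a_i\}$ for $\Sigma \setminus L$, Theorem \ref{uniquetransapprox} realizes $L_+$ as the transverse closure of a pointed monodromy of the form $\phi \circ \pi_\alpha$, where $\alpha$ is an auxiliary arc and $p \in \alpha \cap L$ the introduced marked point. I would then modify $\{a_i\}$ in a small neighborhood of $L$, sliding each arc meeting $L$ across the push-map region, to produce a basis of arcs for $\Sigma \setminus \{p\}$ whose BRAID diagram differs from the LOSS diagram by an explicit sequence of $\beta$-curve handleslides encoding the replacement of $L$ by the push-map track. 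Direct inspection in a neighborhood of $L$ would then identify the LOSS cycle with $\mathbf{x}_{\phi \circ \pi_\alpha}$ under the resulting quasi-isomorphism. The main obstacle is the book-keeping required to promote these cycle-level identifications to genuine $\mathbb{F}[U_1,\dots,U_m]$- or $\mathbb{F}[U]$-module automorphisms of $HFK^-$: the handleslide, stabilization and basepoint-move quasi-isomorphisms must be shown to intertwine the $U$-actions on the nose, and for part (1) this is particularly delicate, since the distinct $U$-variables attached to different link components must be tracked correctly through the identification of the rigid grid combinatorics with the more flexible open-book framework.
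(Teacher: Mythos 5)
First, a framing remark: the paper you are working from does not prove Theorem \ref{equiv} at all --- it is imported from \cite{equiv} --- so the relevant comparison is with the proof given there. For part (1) your outline is essentially the route taken in \cite{equiv}: one reinterprets a grid diagram as the diagram $\mathcal{H}_\phi$ attached to a braid about the binding of an open book for $S^3$ with \emph{annular} pages and matches the canonical grid cycle with $\mathbf{x}_\phi$. But your setup contains a factual slip: the annular-page open book for $S^3$ is the positive Hopf band, whose monodromy is a right-handed Dehn twist, not the ``trivial open book''; the trivial open book has disk pages, whose double is a sphere rather than the grid torus, while an annular page with trivial monodromy gives $S^1\times S^2$. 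With that corrected, part (1) is a reasonable sketch of the cited argument, and the $U$-equivariance worry you raise is actually the easy part, since the maps induced by handleslides, (de)stabilizations and basepoint moves in the minus theory are $\mathbb{F}[U_1,\dots,U_m]$-equivariant by construction.

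The genuine gap is in part (2). In \cite{equiv} the equality of $\mathcal{L}(K)$ and $t^-(K)$ is not obtained by a diagram-by-diagram comparison; it is the main technical content of that paper and is proved with bordered sutured Floer homology, by expressing both invariants in terms of a class associated to the sutured knot complement and computing the relevant gluing bimodules. Your plan --- relate the LOSS diagram to the BRAID diagram by ``an explicit sequence of $\beta$-curve handleslides'' and identify the cycles by ``direct inspection near $L$'' --- skips exactly the hard step and, as stated, would not go through. Concretely, the two diagrams do not live on a common Heegaard surface: the LOSS diagram is built from the page $\Sigma$ containing $L$ with a basis adapted to $L$, whereas braiding $L$ (compare Proposition \ref{approx} and Theorem \ref{uniquetransapprox} in this paper) requires a positive stabilization of the open book, so the BRAID diagram is attached to a stabilized page and a genuinely different pointed monodromy. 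Relating the two therefore involves open book stabilization maps and a global comparison of distinguished generators, not a local handleslide computation in a neighborhood of $L$; nothing in your sketch supplies that comparison. So part (1) needs a correction but follows the cited approach, while part (2) as proposed has a missing argument precisely where the cited proof invests its effort.
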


\subsection{Comultiplicativity of the transverse invariant}
\label{seccomult}

In \cite{braiddynamics}, the author established comultiplicativity of the transverse invariant, generalizing \cite{comultcontact} and \cite{comultgrid}.
Let \[K_g\subset (Y_g,\xi_g), K_h\subset (Y_h,\xi_h), \hspace{.1cm} \text{and}\hspace{.1cm} K_{hg}\subset (Y_{hg},\xi_{hg})\] be transverse links specified as closures of pointed monodromies \[g,h, hg\in Mod(S\smallsetminus \{p_1,\dots,p_n\},\partial S).\] 

\begin{theorem}
\label{thm:comultiplication}
There exists a natural comultiplication homomorphism
\[
\mu ^-:HFK^-(-Y_{hg},K_{hg})\to HFK^-(-Y_{g},K_{g})\otimes HFK^-(-Y_{h},K_{h})
\]
of $\mathbb{F}[U_1,\dots,U_n]$-modules
sending $t^-(K_{hg})$ to $t^-(K_g) \otimes t^- (K_h)$. 
\end{theorem}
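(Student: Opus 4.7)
The plan is to adapt Baldwin's proof of comultiplicativity for the Ozsv\'ath--Szab\'o contact invariant to the pointed setting underlying $t^-$, following the template set up in \cite{comultcontact} and its transverse-link adaptation in \cite{braiddynamics}. The map $\mu^-$ will be realized as a holomorphic triangle counting map associated to a carefully arranged multi-pointed Heegaard triple that simultaneously encodes the three pointed monodromies $g$, $h$, and $hg$.

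First I would construct the Heegaard triple. Fix a basis of arcs $\{a_i\}\subset \Sigma\smallsetminus\{p_1,\ldots,p_n\}$ as in Section \ref{subsec:invts}, and let $\{b_i\}$ and $\{c_i\}$ be two Hamiltonian perturbations obtained by pushing endpoints in the oriented direction of $\partial \Sigma$. Using $g$, $h$, and $hg$ one builds three sets of $\beta$-type attaching curves on the standard HKM-type Heegaard surface $S=\Sigma_{1/2}\cup -\Sigma_0$. With the appropriate stacking — essentially gluing two HKM diagrams along their binding boundary so that the bottom page carries $g$ and the top page carries $h$ — the three pairs of curve systems recover Heegaard diagrams for $(-Y_{hg},K_{hg})$ and for the disjoint union $(-Y_g,K_g)\sqcup(-Y_h,K_h)$, while the third pair, between the perturbed $\beta$-systems themselves, describes a connected sum of copies of $S^1\times S^2$ carrying its top generator $\boldsymbol{\Theta}$.

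Next I would define $\mu^-$ on the chain level by the usual triangle-counting formula
\[
\mu^-(\mathbf{x}) = \sum_{\mathbf{y},\psi}\,(\#\widehat{\mathcal{M}}(\psi))\,U_1^{n_{z_1}(\psi)}\cdots U_n^{n_{z_n}(\psi)}\,\mathbf{y},
\]
where $\psi$ ranges over Maslov-index-zero homotopy classes of triangles with input corners $\mathbf{x}$ and $\boldsymbol{\Theta}$ satisfying $n_w(\psi)=0$ for every $w\in\mathbf{w}$. Standard handleslide, stabilization, and arc-slide arguments — carried out in direct parallel with the invariance proofs for $t^-$ in \cite{equiv} — show that $\mu^-$ descends to a well-defined $\mathbb{F}[U_1,\ldots,U_n]$-module map on homology, independent of the auxiliary choices, with target naturally identified with $HFK^-(-Y_g,K_g)\otimes HFK^-(-Y_h,K_h)$.

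The key step, and the main technical obstacle, is to show that $\mu^-$ sends $t^-(K_{hg})$ to $t^-(K_g)\otimes t^-(K_h)$. Near each arc $a_i$ together with its perturbations $b_i$ and $c_i$, the four curves $\alpha_i$, $\beta_i^{hg}$, $\beta_i^{g}$, $\beta_i^{h}$ bound a canonical small immersed triangle disjoint from all basepoints, and the product of these local triangles yields a Maslov-index-zero class from $\mathbf{x}_{hg}$ to $\mathbf{x}_g\otimes\mathbf{x}_h$ with $\boldsymbol{\Theta}$ at its third corner whose moduli count is $1$. The difficulty lies in ruling out contributions from other homotopy classes: this is handled by a combination of the local model near the braid punctures (which pins the three $\beta$-systems down to standard position on the relevant page) together with an Alexander-grading and basepoint obstruction identical to the ones used in \cite{comultcontact,braiddynamics}, forcing any competing class to cross some basepoint in $\mathbf{z}$ or $\mathbf{w}$.
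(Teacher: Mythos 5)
You should know at the outset that the paper you are being compared against contains no proof of Theorem \ref{thm:comultiplication}: the result is imported from the author's earlier work \cite{braiddynamics}, which this paper cites as having ``established comultiplicativity of the transverse invariant, generalizing \cite{comultcontact} and \cite{comultgrid}.'' So there is no internal argument to measure your proposal against; in spirit, your Baldwin-style holomorphic-triangle strategy is exactly the kind of argument the citation points to, and the overall architecture (a multi-pointed Heegaard triple encoding $g$, $h$, $hg$, a count of index-zero triangles with a distinguished class at one corner, and a product-of-small-triangles computation identifying the image of the distinguished generator) is the expected one.

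As a standalone proof, however, the sketch is vague or imprecise exactly where the real work lies. A connected Heegaard triple cannot have one pair of attaching systems describing the disjoint union $(-Y_g,K_g)\sqcup(-Y_h,K_h)$; at best one pair describes $-(Y_g\# Y_h)$ containing $K_g\cup K_h$, after which a K\"unneth-type identification is needed, and one must track the extra tensor factors and the $U$-variable identifications that arise there (compare the factor $W$ appearing in the proof of Lemma \ref{connectedsum}), since the assertion is that $\mu^-$ is a map of $\mathbb{F}[U_1,\dots,U_n]$-modules with the module structures coming from the $n$ marked points as in Remark \ref{hatcomult}. Relatedly, the pages naturally supporting $Y_{hg}$ and $Y_g\# Y_h$ are different surfaces, so the existence of a single triple whose three pairs compute the three pointed diagrams of Section \ref{subsec:invts} requires an explicit construction (stabilizations and handleslides), not just ``appropriate stacking'' of HKM diagrams along the binding. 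In the pointed setting the third pair does not carry the closed-manifold top class $\boldsymbol{\Theta}$: it describes a connected sum of $S^1\times S^2$'s together with an $n$-component braided unlink, and the class to be fed into the triangle map is $t^-$ of that unlink (cf.\ Lemma \ref{lemqpm}), whose suitability as a ``theta element'' must itself be argued. Finally, the crux --- the count of the canonical small triangles being $1$ and the exclusion of all other index-zero classes avoiding the $\mathbf{w}$-basepoints in the presence of the braid punctures --- is delegated to being ``identical to'' \cite{comultcontact,braiddynamics}; since that is precisely the content of the theorem being proved, the proposal as written is an outline of the known strategy rather than a complete argument.
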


Each of these groups have the structure of an $\mathbb{F}[U_1,\dots,U_n]$-module, coming from the $n$ pairs of basepoints in the diagram used to define each of the invariants.
The actions of $U_i$ and $U_j$ agree, on $HFK^-(-Y,K)$, if the marked points $p_i$ and $p_j$ correspond to the same component of $K$, see Remark \ref{hatcomult}. 

Recall that quotienting $HFK^-$ by the images of $\{U_i - U_j | 1\le i,j\le n\}$ we obtain $HFK^=$, for a knot these theories are equivalent.
Let $t^= (K)\in HFK^= (-Y,K)$ denote the image of $t^-(K)$ under the quotient $HFK^- (-Y,K)\to HFK^= (-Y,K)$. The following is immediate:
\begin{corollary}
\label{comult=}
There exists a natural comultiplication homomorphism
\[
\mu ^=:HFK^=(-Y_{hg},K_{hg})\to HFK^=(-Y_{g},K_{g})\otimes HFK^=(-Y_{h},K_{h})
\]
of $\mathbb{F}[U]$-modules
sending $t^=(K_{hg})$ to $t^=(K_g) \otimes t^= (K_h)$.
\end{corollary}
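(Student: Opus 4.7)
The plan is to obtain $\mu^=$ by descending $\mu^-$ of Theorem \ref{thm:comultiplication} to the appropriate quotients, then check that the transverse invariant is mapped correctly. The key point is that $\mu^-$ is already an $\mathbb{F}[U_1,\dots,U_n]$-module homomorphism, and both $HFK^=$ and the relevant tensor product are defined as quotients of $HFK^-$ (resp. $HFK^-\otimes HFK^-$) by submodules generated by elements of the form $(U_i - U_j)\cdot x$; so equivariance forces a map between quotients.

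First I would spell out the quotient structure. Let $I\subset \mathbb{F}[U_1,\dots,U_n]$ denote the ideal generated by $\{U_i - U_j : 1\le i,j\le n\}$. By definition, $HFK^=(-Y,K)= HFK^-(-Y,K)/I\cdot HFK^-(-Y,K)$, and I equip the tensor product $HFK^-(-Y_g,K_g)\otimes HFK^-(-Y_h,K_h)$ with the diagonal $\mathbb{F}[U_1,\dots,U_n]$-action. A standard check shows that the quotient of the tensor product by $I$ acting diagonally is naturally isomorphic to $HFK^=(-Y_g,K_g)\otimes HFK^=(-Y_h,K_h)$, with $U$ acting as the common image of all the $U_i$ on each factor.

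Next, I would note that since $\mu^-$ is $\mathbb{F}[U_1,\dots,U_n]$-linear, it sends $I\cdot HFK^-(-Y_{hg},K_{hg})$ into $I\cdot \bigl(HFK^-(-Y_g,K_g)\otimes HFK^-(-Y_h,K_h)\bigr)$. Therefore $\mu^-$ descends to a well-defined homomorphism
\[
\mu^=: HFK^=(-Y_{hg},K_{hg})\to HFK^=(-Y_g,K_g)\otimes HFK^=(-Y_h,K_h)
\]
of $\mathbb{F}[U]$-modules, where the $U$-action on the common target is the image of any $U_i$. Naturality of $\mu^=$ is inherited from that of $\mu^-$.

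Finally, for the transverse invariant: the quotient map $q:HFK^-(-Y,K)\to HFK^=(-Y,K)$ sends $t^-(K)$ to $t^=(K)$ by definition. Applying $q\otimes q$ (on the target) and $q$ (on the source) to the identity $\mu^-(t^-(K_{hg}))= t^-(K_g)\otimes t^-(K_h)$ from Theorem \ref{thm:comultiplication}, we obtain $\mu^=(t^=(K_{hg}))= t^=(K_g)\otimes t^=(K_h)$, completing the proof. The main obstacle is really just the bookkeeping to confirm that the diagonal $I$-action on the tensor product gives the expected quotient; once that is in hand, the entire statement is a formal consequence of Theorem \ref{thm:comultiplication}.
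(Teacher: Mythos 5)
Your proposal is correct and is exactly the argument the paper has in mind: Corollary \ref{comult=} is stated as an immediate formal consequence of Theorem \ref{thm:comultiplication}, obtained by passing to the quotients defining $HFK^=$, and your descent argument together with the observation that the quotient maps carry $t^-$ to $t^=$ supplies the same (short) reasoning. One caution: ``diagonal action'' should not be read as $U_i\mapsto U_i\otimes U_i$ on an $\mathbb{F}$-tensor product---$\mu^-$ is not equivariant for that action, and the quotient by the corresponding ideal would not be $HFK^=\otimes HFK^=$; the relevant structure is the one from Theorem \ref{thm:comultiplication}, in which $U_i$ acts through either factor (equivalently the tensor product is taken over $R=\mathbb{F}[U_1,\dots,U_n]$), and then the base-change identification $(B\otimes_R C)\otimes_R R/I\cong (B/IB)\otimes_{R/I}(C/IC)$ with $I=(U_i-U_j)$ yields precisely the target of $\mu^=$ with its $\mathbb{F}[U]$-module structure.
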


There is a version of comultiplicativity for the totally blocked theory:
\begin{theorem}
\label{thm:comult}
There exists a natural comultiplication homomorphism
\[
\widetilde{\mu}_*: \widetilde{HFK}(\mathcal{H}_{hg})\to \widetilde{HFK}(\mathcal{H}_g)\otimes \widetilde{HFK}(\mathcal{H}_h),
\]
sending $\widetilde{t}(hg)$ to $\widetilde{t}(g)\otimes \widetilde{t}(h)$.
In particular, $\widehat{t}(K_g),\widehat{t} (K_h) \ne 0$ implies that $\widehat{t}(K_{hg})\ne 0$.
\end{theorem}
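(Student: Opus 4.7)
The plan is to reduce Theorem \ref{thm:comult} to the already-established comultiplication of Theorem \ref{thm:comultiplication}. The map $\mu^-$ of that theorem is induced by an underlying chain map $CFK^-(\mathcal{H}_{hg}) \to CFK^-(\mathcal{H}_g) \otimes CFK^-(\mathcal{H}_h)$ which is an $\mathbb{F}[U_1,\dots,U_n]$-module homomorphism and which, at the level of distinguished generators, sends $\bold{x}_{hg}$ to $\bold{x}_g \otimes \bold{x}_h$. The totally blocked complex $\widetilde{CFK}$ is obtained by setting $U_1 = \dots = U_n = 0$, so any $\mathbb{F}[U_1,\dots,U_n]$-linear chain map descends to a chain map on the quotients.

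Concretely, I would apply the functor $- \otimes_{\mathbb{F}[U_1,\dots,U_n]} \mathbb{F}$ to the chain-level comultiplication, obtaining a chain map
\[
\widetilde{\mu}: \widetilde{CFK}(\mathcal{H}_{hg}) \to \widetilde{CFK}(\mathcal{H}_g) \otimes \widetilde{CFK}(\mathcal{H}_h)
\]
that still carries $\bold{x}_{hg}$ to $\bold{x}_g \otimes \bold{x}_h$. Passing to homology gives $\widetilde{\mu}_*$ sending $\widetilde{t}(hg)$ to $\widetilde{t}(g) \otimes \widetilde{t}(h)$, as desired.

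For the nonvanishing consequence, suppose $\widehat{t}(K_g) \ne 0$ and $\widehat{t}(K_h) \ne 0$. By Lemma \ref{tildenonvanishing}, this is equivalent to $\widetilde{t}(g) \ne 0$ and $\widetilde{t}(h) \ne 0$. Since we are working over the field $\mathbb{F}$, the element $\widetilde{t}(g) \otimes \widetilde{t}(h)$ is nonzero in the tensor product $\widetilde{HFK}(\mathcal{H}_g) \otimes \widetilde{HFK}(\mathcal{H}_h)$. As $\widetilde{\mu}_*$ sends $\widetilde{t}(hg)$ to this nonzero class, we conclude $\widetilde{t}(hg) \ne 0$, and a second application of Lemma \ref{tildenonvanishing} yields $\widehat{t}(K_{hg}) \ne 0$.

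The main technical point to verify is that the comultiplication of Theorem \ref{thm:comultiplication} is constructed at the chain level as a genuine $\mathbb{F}[U_1,\dots,U_n]$-module map (not merely $\mathbb{F}$-linear), so that it is compatible with the operation of setting every $U_i$ to zero. This should follow by inspecting the definition in \cite{braiddynamics}, where the comultiplication is built from count of pseudo-holomorphic triangles in a suitable triple diagram; the $U$-powers recorded at each $z$-basepoint behave naturally under tensor product and are preserved by the chain map, which is exactly the module structure needed. Once this compatibility is in hand, the rest of the argument is formal.
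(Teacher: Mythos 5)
The paper does not actually prove Theorem \ref{thm:comult} in this article: like Theorem \ref{thm:comultiplication}, it is imported from \cite{braiddynamics}, where the tilde statement comes out of the same chain-level (polygon-counting) construction that produces $\mu^-$. Your general strategy of deducing it from the minus version is therefore reasonable, and the second half of your argument is exactly right: K\"unneth over the field $\mathbb{F}$, the fact that a tensor product of nonzero classes over a field is nonzero, and Lemma \ref{tildenonvanishing} applied on both ends is precisely why the totally blocked theory is used for the ``in particular'' clause.

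The gap is in the first half. Everything hinges on a chain-level, $\mathbb{F}[U_1,\dots,U_n]$-equivariant map $CFK^-(\mathcal{H}_{hg})\to CFK^-(\mathcal{H}_g)\otimes CFK^-(\mathcal{H}_h)$ carrying $\mathbf{x}_{hg}$ to a cycle homologous to $\mathbf{x}_g\otimes\mathbf{x}_h$, and this is not part of the statement of Theorem \ref{thm:comultiplication}, which is purely homology-level. The distinction matters: $HFK^=$ is by definition a quotient of the homology $HFK^-$, which is why Corollary \ref{comult=} really is immediate, whereas $\widetilde{HFK}$ is the homology of the quotient complex, and setting $U_i=0$ does not commute with taking homology; one cannot obtain $\widetilde{\mu}_*$ by specializing the homology-level map $\mu^-$ at $U_i=0$. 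So the item you flag as ``the main technical point to verify'' is in fact the entire content of the theorem, and verifying it amounts to reproducing the chain-level construction of \cite{braiddynamics} -- which is effectively what the paper does by citing that reference rather than giving a proof. If you do carry this out, be precise about which ring the chain-level tensor product is taken over and what its $U_i$-module structure is, since the identification $(A\otimes B)\otimes_{\mathbb{F}[U_1,\dots,U_n]}\mathbb{F}\cong\widetilde{A}\otimes_{\mathbb{F}}\widetilde{B}$ that your reduction uses depends on exactly this.
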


\begin{remark}
\label{hatcomult}
If the three pointed monodromies $g,h$ and $hg$ each have closures which are $n$-component links, i.e. if they are \emph{pure braids}, then the map of Theorem \ref{thm:comultiplication} is particularly nice, because the three $HFK^-$ groups are naturally $\mathbb{F}[U_1,\dots,U_n]$ modules, i.e. the $U$-variable actions are all distinct. 
Also, in this case the tilde and hat complexes coincide, and the map of Theorem \ref{thm:comult} is a map between $\widehat{HFK}$ groups sending $\widehat{t}(K_{hg})\to \widehat{t}(K_{g})\otimes \widehat{t}(K_{h})$.

\end{remark}

The closure of a right-hand Dehn or half twist has non-vanishing hat invariant:

\begin{proposition} \cite{braiddynamics}
\label{prop:nonzero}
Let $\Sigma$ be an oriented, compact surface with boundary and some marked points. Let $K_\gamma$ denote the closure of a right-hand half twist along some embedded arc $\gamma$ connecting two marked points. Let $K_\delta$ denote the closure of a right-hand Dehn twist along any simple closed curve $\delta$. Then $\widehat{t}(K_\gamma),\widehat{t}(K_\delta) \ne 0.$
\end{proposition}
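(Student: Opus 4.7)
The plan is to reduce the non-vanishing of $\widehat{t}$ to the non-vanishing of the totally blocked invariant $\widetilde{t}$, and then to verify the latter by a localized Heegaard diagram computation. By Lemma \ref{tildenonvanishing}, it suffices to show that $\widetilde{t}(\sigma_\gamma) \neq 0$ and $\widetilde{t}(\tau_\delta) \neq 0$ in the totally blocked complex $\widetilde{CFK}(\mathcal{H}_\phi)$ for $\phi = \sigma_\gamma$ and $\phi = \tau_\delta$, respectively.

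For the half twist $\sigma_\gamma$ along an arc from $p_i$ to $p_j$, I would choose a basis of arcs $\{a_k\}$ for $\Sigma \setminus \{p_1,\dots,p_n\}$ in which at most one arc enters a small tubular neighborhood $N_\gamma$ of $\gamma$; all other arcs are chosen disjoint from $N_\gamma$, and hence $\sigma_\gamma$ acts as the identity on them. In the resulting Heegaard diagram, the $(\alpha_k,\beta_k)$ pairs for arcs disjoint from $N_\gamma$ interact exactly as they would for the identity monodromy, so the components of $\mathbf{x}_{\sigma_\gamma}$ lying in $\Sigma_{1/2}$ along those pairs are forced, and any potential differential involving $\mathbf{x}_{\sigma_\gamma}$ in $\widetilde{CFK}$ must be supported in the remaining local region near $N_\gamma$. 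The local model there reduces to the $2$-punctured disk with monodromy $\sigma_\gamma$, whose transverse closure is the positive Hopf link in $(S^3,\xi_{std})$; the non-vanishing of $\widehat{t}$ for the positive Hopf link is a well-known base case.

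For the Dehn twist $\tau_\delta$, I would choose a basis of arcs that is either disjoint from $\delta$ (in the separating case, or generically for a non-separating $\delta$) or intersects $\delta$ transversely in a single point for one distinguished arc. In each setting the local model reduces to a Dehn twist monodromy on a subsurface with trivial global extension, which corresponds to a Stein fillable piece of the open book whose Ozsv\'ath--Szab\'o contact invariant is non-vanishing; with a careful choice of arcs one can identify the distinguished generator $\mathbf{x}_{\tau_\delta}$ with the Honda--Kazez--Matic cycle representative for $c(\xi)$ in the relevant summand, which gives the required non-vanishing.

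The main obstacle is the local-to-global step: one must verify that any domain from $\mathbf{x}_\phi$ to another generator in the Heegaard diagram either contains some $z$- or $w$-basepoint (and hence vanishes in the totally blocked complex) or reduces to a domain in the localized base-case region. This requires a careful enumeration of generators and domains exploiting the product-like structure of the diagram induced by the chosen basis of arcs. I expect this to be the most technical step, although the positive (and in particular right-veering) nature of $\sigma_\gamma$ and $\tau_\delta$ should make the domain analysis tractable, since such monodromies tend to isolate $\mathbf{x}_\phi$ in the bigrading and prevent escape across basepoints.
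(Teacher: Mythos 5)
Note first that the paper does not reprove this proposition; it is imported from \cite{braiddynamics}. Judged on its own terms, your sketch has a genuine gap at its central step. Reducing to $\widetilde{t}\ne 0$ via Lemma \ref{tildenonvanishing} is fine, and showing that $\mathbf{x}_\phi$ is a cycle is standard; the actual content is that $\mathbf{x}_\phi$ is not a \emph{boundary}, and your argument for this rests on the claim that any differential touching $\mathbf{x}_{\sigma_\gamma}$ is ``supported in the local region near $N_\gamma$.'' Holomorphic disk counts do not localize to the support of the monodromy: even if all but one basis arc avoids $N_\gamma$, the pairs $(\alpha_k,\beta_k)$ with $\phi(b_k)=b_k$ still intersect in two points (one in $\Sigma_{1/2}$, one in $\Sigma_0$), other generators use the $\Sigma_0$ points, and domains avoiding basepoints can sweep through the thin regions far from $N_\gamma$; nothing ``forces'' the outside components of a generator to agree with those of $\mathbf{x}_\phi$. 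So the local-to-global step you flag as the main obstacle is not a technicality but the whole proof, and it is not supplied. There are also factual slips in the proposed base cases: the closure of a single right-hand half twist on two strands is a transverse unknot (it is precisely a positive Markov stabilization of the trivial $1$-braid), not the positive Hopf link; and for a \emph{separating} $\delta$ no basis of arcs can be disjoint from $\delta$ (each complementary region is a disk with one marked point), so your dichotomy for the Dehn twist case is not available as stated. Finally, identifying $\mathbf{x}_{\tau_\delta}$ with the Honda--Kazez--Mati\'c cycle ``in the relevant summand'' needs an actual chain-level comparison between the multi-pointed link complex and $\widehat{CF}$, which only comes for free when each $w_i,z_i$ pair lies in a single region (as in Lemma \ref{meridians}); that is not automatic for arbitrary marked points.

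A shorter and complete route uses the tools already in the paper rather than localization. For $K_\gamma$: the pointed monodromy $\sigma_\gamma$ is exactly a positive Markov stabilization of the trivial pointed monodromy on one fewer marked point, so by Theorem \ref{thm:markov} $K_\gamma$ is transversely isotopic to $U_{n-1}\subset(\#^m(S^1\times S^2),\xi_{std})$, whose non-vanishing is the trivial-differential computation of Lemma \ref{lemqpm}. For $K_\delta$: all marked points are fixed by $\tau_\delta$, and after a braid isotopy moving them into a collar of $\partial\Sigma$ the closure is the union of braided meridians of the binding of the open book $(\Sigma,\tau_\delta)$, which supports a Stein fillable contact structure; non-vanishing then follows from $c(\xi)\ne 0$ together with the identifications of Lemma \ref{meridians} (and a Lemma \ref{birth}/\ref{connectedsum}-type argument to handle several components). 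If you want to keep your diagrammatic approach instead, you must actually carry out the enumeration of generators and positive domains for an explicitly chosen basis adapted to $\gamma$ or $\delta$, which is the substance of the argument in \cite{braiddynamics}.
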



\subsection{Natural quotients}
\label{nat}

We fix notation for quotient maps associated to removing $sl=-1$ unknots and tight $S^1\times S^2$-summands which, combined with comuliplicativity, will be used to prove all of the naturality statements of the transverse invariant in this paper. 
One should think of these maps as contravariantly induced by births of $sl=-1$ unknots or Weinstein 1-handle attachments.

Suppose $(B,\pi)$ be an open book with page $\Sigma=\pi^{-1}(0)$ monodromy $\phi$. Introduce $n$ marked points $\{p_1,\dots,p_n\}$ in a collar neighborhood of $\partial \Sigma$ which is fixed by $\phi$. 
Let \[\phi _n\in Mod(\Sigma\smallsetminus \{p_1,\dots, p_n\}, \partial \Sigma)\] denote the trivial extension of $\phi$, and $U_n$ the transverse closure of $\phi _n$. 
$U_n$ consists of $n$ disjoint positively braided meridional unknots of the binding $B$.

\begin{lemma}
\label{meridians}
There are $\mathbb{F}[U]$ and $\mathbb{F}$-module isomorphisms
\begin{gather*}
g^-: HFK^-(-Y,U_1)\to \widehat{HF}(-Y)\otimes \mathbb{F}[U]\\
\widehat{g}:\widehat{HFK}(-Y,U_1)\to \widehat{HF}(-Y)
\end{gather*}
mapping $t^-(U_1)\to c(\xi)\otimes 1$ and $\widehat{t}(U_1)\to c(\xi)$, respectively.
\end{lemma}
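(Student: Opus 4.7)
The plan is to realize the multipointed Heegaard diagram $\mathcal{H}_{\phi_1}$ for $(-Y,U_1)$ as a ``local connect sum'' of the HKM open book Heegaard diagram for $(-Y,\xi)$ with the standard genus-zero multipointed diagram for the positively braided transverse unknot in $S^3$, and then invoke a Künneth-type splitting at the chain level.

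First I would pick a basis of arcs $\{a_1,a_2,\dots,a_m\}$ for $\Sigma\smallsetminus\{p_1\}$ so that $a_1$ is a short properly embedded arc sitting inside the collar neighborhood of $\partial\Sigma$ on which $\phi$ restricts to the identity, cutting off a disk that contains $p_1$, while $\{a_2,\dots,a_m\}$ forms a basis of arcs for the unpunctured page $\Sigma$ (cutting it into a single disk). Because $\phi$ is the identity on the chosen collar, the curves $\alpha_1=a_1\times\{0,1/2\}$ and $\beta_1=b_1\times\{1/2\}\cup b_1\times\{0\}$ are confined to this collar, are disjoint from all other $\alpha_j,\beta_j$, and intersect transversely in exactly two points $\{x_1,x_1'\}$, with $x_1\in\Sigma_{1/2}$ and $x_1'\in\Sigma_0$; the two resulting bigons contain $w_1$ and $z_1$, one apiece. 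This local picture is precisely the standard BVV diagram for the positive transverse unknot in $S^3$, whose knot Floer chain complex is freely generated as an $\mathbb{F}[U]$-module by $x_1$, with $t^-(U)$ represented by $[x_1,0]$ and $\widehat{t}(U)$ by its image in the hat theory.

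Next I would observe that the remaining data $(\Sigma_{1/2}\cup-\Sigma_0,\{\alpha_j\}_{j\ge 2},\{\beta_j\}_{j\ge 2})$ is exactly the Honda--Kazez--Matic open book Heegaard diagram $\mathcal{H}_\phi$ for $\widehat{HF}(-Y)$, whose distinguished generator $\mathbf{x}_\phi$ (with component $x_j$ on each $\alpha_j$) represents the Ozsv\'ath--Szab\'o contact class $c(\xi)$. Since the local summand is disjoint from the remainder and the basepoints $z_1,w_1$ are confined to the two local bigons, every pseudo-holomorphic disk counted by $\partial^-$ either lies entirely within the local unknot piece or entirely outside of it. This yields a chain-complex isomorphism
\[
CFK^-(\mathcal{H}_{\phi_1})\;\cong\;\widehat{CF}(\mathcal{H}_\phi)\otimes_{\mathbb{F}}\mathbb{F}[U]
\]
under which the BVV generator $\mathbf{x}_{\phi_1}=\mathbf{x}_\phi\cup\{x_1\}$ corresponds to $\mathbf{x}_\phi\otimes 1$. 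Passing to homology produces $g^-$ sending $t^-(U_1)\mapsto c(\xi)\otimes 1$; setting $U=0$ collapses $\mathbb{F}[U]$ down to $\mathbb{F}$ and gives $\widehat{g}$ with $\widehat{t}(U_1)\mapsto c(\xi)$.

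The main technical obstacle is justifying the splitting of $\partial^-$ along the two summands---that is, excluding mixed holomorphic disks crossing between the collar piece and the rest of the diagram. This is the standard local argument: any such disk would have nonzero multiplicity at $z_1$ or $w_1$, which is either prohibited outright by the $n_w=0$ admissibility condition used in the definition of $CFK^-$, or is accounted for by the $U_1$-action internal to the local $\mathbb{F}[U]$ factor. Once the splitting is established, the identifications $\mathbf{x}_\phi\leftrightarrow c(\xi)$ and $x_1\leftrightarrow t^-(U)$ are built into the HKM and BVV constructions respectively, and the lemma follows.
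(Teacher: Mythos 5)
There is a genuine gap: the arc system you propose is not a basis of arcs for $\Sigma\smallsetminus\{p_1\}$, and consequently the diagram you analyze is not the diagram $\mathcal{H}_{\phi_1}$ computing $HFK^-(-Y,U_1)$. In the construction of Section \ref{subsec:invts}, a basis of arcs must cut the pointed page into exactly $n$ disks, one per marked point; here $n=1$, so the arcs must cut $\Sigma\smallsetminus\{p_1\}$ into a \emph{single} disk containing $p_1$. Your collection consists of a basis $\{a_2,\dots,a_m\}$ for the unpunctured page \emph{plus} an extra arc $a_1$ cutting off a disk around $p_1$: this cuts the page into two disks, one of which contains no marked point. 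In the doubled diagram this produces one more $\alpha$- (and $\beta$-) curve than the single basepoint pair allows, i.e.\ a component of $S\smallsetminus\boldsymbol{\alpha}$ with no $w$-basepoint, so the data is not a valid doubly pointed Heegaard diagram for $(-Y,U_1)$ at all. (Your "standard genus-zero diagram for the unknot" with one $\alpha$, one $\beta$ and a single $(w,z)$ pair has the same defect; the genus-zero unknot diagram with one curve pair requires two $w$'s and two $z$'s.) One could repair this by adding a free basepoint in the empty region and then destabilizing, as in Lemma \ref{birth}, or by genuinely connect-summing diagrams at basepoints as in Lemma \ref{connectedsum}, but as written the K\"unneth splitting is being applied to a diagram that does not compute the left-hand side of the lemma. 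A secondary soft spot: your exclusion of "mixed" disks ("any such disk would have nonzero multiplicity at $z_1$ or $w_1$") is asserted rather than proved; domains can pass between the collar piece and the rest of the diagram without covering the two bigons, so this step would need the usual connected-sum-at-a-basepoint argument rather than a local one.

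The irony is that the correct observation is simpler than your setup. Since $U_1$ is the closure of the trivially extended monodromy $\phi_1$ with the single marked point $p_1$ placed in a collar fixed by $\phi$, a basis of arcs for $\Sigma$ is \emph{already} a basis of arcs for $\Sigma\smallsetminus\{p_1\}$: no extra arc is needed. Hence $\mathcal{H}_{\phi_1}$ is literally the Honda--Kazez--Mati\'c diagram $\mathcal{H}'=(S,\boldsymbol{\beta},\boldsymbol{\alpha},w)$ for $\widehat{HF}(-Y)$ with one additional $z$-basepoint lying in the same region of $S\smallsetminus\{\boldsymbol{\alpha}\cup\boldsymbol{\beta}\}$ as $w$. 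Every disk counted in $\partial^-$ then has $n_z=n_w=0$, so $CFK^-(\mathcal{H}_{\phi_1})=\widehat{CF}(\mathcal{H}')\otimes\mathbb{F}[U]$ and $\widehat{CFK}(\mathcal{H}_{\phi_1})=\widehat{CF}(\mathcal{H}')$ on the nose, with $\mathbf{x}_{\phi_1}$ identified with the HKM contact cycle; no splitting of holomorphic disks, no extra curve pair, and no destabilization is required.
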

\begin{proof}

Note that a basis of arcs for $\Sigma$ is a basis of arcs for $\Sigma \smallsetminus \{p\}$, in particular the diagrams used to define the transverse and contact invariants differ by a single basepoint.

Let $\mathcal{H}:= \mathcal{H}_{\phi_1} = (S,\boldsymbol{\beta},\boldsymbol{\alpha},w,z)$ be the doubly pointed Heegaard diagram for $(-Y,U_1)$ constructed in Section \ref{subsec:invts}, and $\mathcal{H}' := (S,\boldsymbol{\beta},\boldsymbol{\alpha},w)$.
Note that the $w$ and $z$ basepoints are in the same component of $S\smallsetminus \{ \boldsymbol{\alpha} \cup \boldsymbol{\beta}\}$, thus as complexes $CFK^- (\mathcal{H}) = \widehat{CF}(\mathcal{H}')\otimes \mathbb{F}[U]$ and $\widehat{CFK}(\mathcal{H}) = \widehat{CF}(\mathcal{H}')$.
The induced isomorphisms on homology map $t^-(U_1)\to c(\xi)\otimes 1$ and $\widehat{t}(U_1)$ to $c(\xi)$ (see the \cite{HKM} definition of the contact invariant).

\end{proof}

Suppose now that $K$ is braided about $(B,\pi)$ with index $n$, and is specified by some pointed monodromy $\psi$. Introduce a single new marked point $p_{n+1}$ in a collar neighborhood
of the boundary and extend $\psi$ trivially to a pointed monodromy
\[
\psi ' \in Mod(\Sigma\smallsetminus \{p_1,\dots, p_{n+1}\}, \partial \Sigma).
\]
having transverse closure $K\cup U_1$.

\begin{lemma}
\label{birth}
There are $\mathbb{F}$ and $\mathbb{F}[U]$-module homomorphisms
\[
h^\circ: HFK^\circ (-Y,K\cup U_1)\to HFK^\circ (-Y,K)
\]
mapping $t^{\circ}(K\cup U_1)\to t^{\circ}(K)$, where $\circ\in\{\wedge,=\}$.
\end{lemma}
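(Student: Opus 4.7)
The plan is to construct $h^\circ$ directly on the chain level from the local structure of $\mathcal{H}_{\psi'}$ near $p_{n+1}$, and then project onto the chain factor containing the transverse invariant.

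First I would compare the Heegaard diagrams $\mathcal{H}_\psi$ and $\mathcal{H}_{\psi'}$. Since $p_{n+1}$ lies in a collar of $\partial\Sigma$ on which $\psi'$ acts as the identity, a basis of arcs for $\Sigma\smallsetminus\{p_1,\dots,p_n\}$ extends to a basis for $\Sigma\smallsetminus\{p_1,\dots,p_{n+1}\}$ by adding a single arc $a_{n+1}$ cutting off a disk containing $p_{n+1}$. Because $\psi'(b_{n+1})=b_{n+1}$, the resulting closed curves $\alpha_{n+1},\beta_{n+1}\subset S$ are isotopic, each bounds a disk in $S$, and they intersect in exactly two points $x_{n+1}^+\in\Sigma_{1/2}$ and $x_{n+1}^-\in\Sigma_0$, cobounding two bigons: one in $\Sigma_{1/2}$ enclosing $w_{n+1}$ and one in $\Sigma_0$ enclosing $z_{n+1}$. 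Elsewhere $\mathcal{H}_{\psi'}$ agrees with $\mathcal{H}_\psi$.

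Next I would analyze the induced chain complex. Every intersection point in $\mathbb{T}_{\boldsymbol{\alpha}'}\cap\mathbb{T}_{\boldsymbol{\beta}'}$ has the form $(\mathbf{x},x_{n+1}^\epsilon)$ with $\mathbf{x}\in\mathbb{T}_{\boldsymbol{\alpha}}\cap\mathbb{T}_{\boldsymbol{\beta}}$ and $\epsilon\in\{+,-\}$. A positivity-of-domains argument, in the spirit of the stabilization-invariance proofs of \cite{equiv} and of the $K=\emptyset$ case treated in Lemma~\ref{meridians}, shows that any Maslov index one disk avoiding all the $w$-basepoints is either (i) a disk in $\mathcal{H}_\psi$ acting on the first factor with $\epsilon$ unchanged, or (ii) the unique local bigon in $\Sigma_0$ enclosing $z_{n+1}$, contributing a $U_{n+1}$-weighted differential between the two local generators. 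In the hat theory $U_{n+1}$ is set to zero (since $U_1$ is a new component of $K\cup U_1$), so this local bigon is killed; I then define $h^\wedge$ as the projection which kills every generator with local component $x_{n+1}^-$ and identifies $(\mathbf{x},x_{n+1}^+)$ with $\mathbf{x}$. For the $HFK^=$ version, the same projection, together with the identifications $U_j=U$ on each side, yields $h^=$ as an $\mathbb{F}[U]$-module homomorphism.

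Preservation of the transverse invariant is then automatic: by construction the distinguished generator $\mathbf{x}_\phi$ of \cite{equiv} has each coordinate equal to the intersection point in $\Sigma_{1/2}$, so $\mathbf{x}_{\psi'}=(\mathbf{x}_\psi,x_{n+1}^+)$; thus $h^\circ$ sends $\mathbf{x}_{\psi'}$ to $\mathbf{x}_\psi$ on the chain level, and passing to homology gives $h^\circ(t^\circ(K\cup U_1))=t^\circ(K)$.

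The main obstacle will be verifying the splitting of the differential in the second step --- that is, ruling out ``mixed'' holomorphic disks having nontrivial multiplicity in both the local region and its complement that do not factor through one of the two bigons. This is a standard positivity-of-domains argument; the key input is that $\alpha_{n+1}$ and $\beta_{n+1}$ are supported in a small disk isolated from the rest of the diagram and from all other basepoints, so the two small bigons are the only local domains with small boundary multiplicities.
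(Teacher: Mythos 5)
Your local model of $\mathcal{H}_{\psi'}$ near the new marked point is not what the construction produces, and the rest of the argument leans on it. In the diagram of \cite{equiv}, the Heegaard surface is $S=\Sigma_{1/2}\cup-\Sigma_0$ glued along the binding $\partial\Sigma$, and $w_{n+1}=p_{n+1}\times\{1/2\}$, $z_{n+1}=p_{n+1}\times\{0\}$ are mirror copies across that gluing circle. The new arc $a_{n+1}$ cuts off a disk $D$ meeting $\partial\Sigma$ in a sub-arc, $b_{n+1}$ is a small push-off entering $D$ through that sub-arc and crossing $a_{n+1}$ once, so \emph{every} local complementary region abuts $\partial\Sigma$; its double therefore contains both copies of $p_{n+1}$. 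Hence $w_{n+1}$ and $z_{n+1}$ lie in the \emph{same} component of $S\smallsetminus(\boldsymbol{\alpha}\cup\boldsymbol{\beta})$ --- this is exactly the point the correct argument exploits --- and the two genuine local bigons (with corners $x_{n+1}^{+}$, $x_{n+1}^{-}$, each necessarily crossing the binding, not contained in one page as you assert) carry no basepoints at all. In particular there is no ``$U_{n+1}$-weighted bigon'' in the differential; rather $U_{n+1}$ never appears, because any disk with $n_{w_{n+1}}=0$ automatically has $n_{z_{n+1}}=0$. Your model describes a different, free-floating-unknot diagram, not $\mathcal{H}_{\psi'}$, so the splitting you invoke and the definition of $h^{\wedge}$ as the naive projection killing the $x_{n+1}^{-}$-generators are not justified for this diagram: the basepoint-free local bigons are candidate differentials that your projection does not respect a priori, and ruling them in or out is precisely what the stabilization machinery is for. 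The $h^{=}$ case has a further problem even inside your model: after identifying all variables with $U$, the alleged local differential survives, and whether your projection is a chain map depends on its direction, which you never determine (for one of the two directions it fails).

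The intended argument is shorter and avoids all of this: since $w_u$ and $z_u$ share a region, deleting $z_u$ leaves the $CFK^-$ differential unchanged, producing a diagram for $(-Y,K)$ with one \emph{free} basepoint; that free basepoint, together with the extra $\alpha_{n+1}/\beta_{n+1}$ pair, is removed by a standard destabilization, and the destabilization chain map carries $\mathbf{x}_{\psi'}$ to $\mathbf{x}_{\psi}$, inducing $h^{\circ}$ with $h^{\circ}(t^{\circ}(K\cup U_1))=t^{\circ}(K)$. If you want a hands-on proof in your style, you should first establish the correct local picture above and then either verify that the two empty bigons cancel (or are handled by the destabilization map) rather than positing a $z$-weighted bigon that is not there.
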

\begin{proof}
We may complete any basis 
for $\Sigma\smallsetminus \{p_1,\dots, p_n\}$ to a basis 
for $\Sigma\smallsetminus \{p_1,\dots,p_n,p_{n+1}\}$ by adding boundary a single boundary parallel arc 
which is fixed by $\psi'$ and cobounds a disk with a sub-arc of $\partial \Sigma$ containing only the marked point $p_{n+1}$.

As outlined above, to this data \cite{equiv} associate a multi pointed Heegaard diagram $\mathcal{H}_{\psi '} = (S,\boldsymbol{\beta},\boldsymbol{\alpha},\bold{w}\cup w_u,\bold{z}\cup z_u)$. The pair of basepoints $\{w_u,z_u\}$ encoding $U_1$ are in the same component of $S\smallsetminus \{ \boldsymbol{\alpha} \cup \boldsymbol{\beta}\}$. Removing the basepoint $z_u$ does not change the differential of the $CFK^-$ complex. The resulting diagram has a single free basepoint which can be removed via destabilization. The destabilization chain map induces the desired map on homology. 

\end{proof}


In the case that the monodromy of an open book is trivial, the closure is a connected sum of $m= -\chi (\Sigma)$ copies of $S^1\times S^2$ and the supported contact structure $\xi_{std}$ has non-vanishing contact invariant, $c(\xi_{std})\ne 0$.
\begin{lemma}
\label{lemqpm}
Let $U_n\subset (\#^m (S^1\times S^2),\xi_{std})$ be the transverse link specified above. Then
\begin{gather*}
\widehat{t}(U_n), t^=(U_n),t^-(U_n)\ne 0.
\end{gather*}
Moreover, $t^-(U_n)$ is not in the image of $U_i$, for $1\le i \le n$.
\end{lemma}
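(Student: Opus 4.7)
The plan is to proceed by induction on $n$, with the base case handled by Lemma \ref{meridians} and the inductive step by the birth map of Lemma \ref{birth}, exploiting that $U_n = U_{n-1} \sqcup U_1^{(n)}$ where $U_1^{(n)}$ is an additional unlinked meridional unknot of the binding. The key observation, which handles the ``image of $U_i$'' clause, is that the defining quotient $q\colon HFK^-(-Y,L) \to HFK^=(-Y,L)$ collapses all $U_i$-actions to a single $U$-action, so a non-$U$-divisibility statement in $HFK^=$ automatically implies non-$U_i$-divisibility in $HFK^-$ for every $i$.

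For the base case $n=1$, the standard contact structure $\xi_{std}$ on $\#^m(S^1\times S^2)$ has non-vanishing Ozsv\'ath--Szab\'o invariant, so Lemma \ref{meridians} immediately gives $\widehat{t}(U_1) \neq 0$ and identifies $t^-(U_1)$ with $c(\xi_{std})\otimes 1 \in \widehat{HF}(-Y)\otimes \mathbb{F}[U]$. Any element of $U \cdot HFK^-(-Y,U_1)$ has $U$-exponent at least one in the $\mathbb{F}[U]$-factor, so $t^-(U_1)$ is not in the image of $U$. Since $U_1$ has a single component, $HFK^- = HFK^=$ and $t^= = t^-$, so the same conclusions hold for $t^=(U_1)$.

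For the inductive step, apply Lemma \ref{birth} with $K = U_{n-1}$ and the added component being $U_1^{(n)}$, obtaining an $\mathbb{F}$-linear map $h^\wedge\colon \widehat{HFK}(-Y,U_n) \to \widehat{HFK}(-Y,U_{n-1})$ and an $\mathbb{F}[U]$-linear map $h^=\colon HFK^=(-Y,U_n) \to HFK^=(-Y,U_{n-1})$, each sending the transverse invariant of $U_n$ to that of $U_{n-1}$. By the inductive hypothesis $\widehat{t}(U_{n-1}) \neq 0$ and $t^=(U_{n-1}) \notin U \cdot HFK^=(-Y,U_{n-1})$, whence $\widehat{t}(U_n) \neq 0$ and, using $\mathbb{F}[U]$-linearity, if one had $t^=(U_n) = U \cdot z$ then $t^=(U_{n-1}) = U \cdot h^=(z)$, a contradiction. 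Hence $t^=(U_n) \neq 0$ and lies outside $U \cdot HFK^=(-Y,U_n)$.

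It remains to upgrade these statements to $HFK^-$. The quotient $q\colon HFK^-(-Y,U_n) \to HFK^=(-Y,U_n)$ sends $t^-(U_n) \mapsto t^=(U_n)$ by definition, so non-vanishing of the latter forces $t^-(U_n) \neq 0$. If $t^-(U_n) = U_i \cdot y$ for some $i$ and $y$, applying $q$ yields $t^=(U_n) = U \cdot q(y)$, contradicting the previous paragraph. This closes the induction. The only non-routine ingredient is this last reduction; everything else is a mechanical application of Lemma \ref{meridians}, Lemma \ref{birth}, and $c(\xi_{std})\neq 0$.
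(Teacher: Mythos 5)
Your proof is correct, but it takes a genuinely different route from the paper's. The paper proves Lemma \ref{lemqpm} by direct computation: since the pointed monodromy $\phi_n$ is trivial, the Heegaard diagram built from any basis of arcs for $\Sigma\smallsetminus\{p_1,\dots,p_n\}$ has a $CFK^-$ differential that vanishes identically, so $HFK^-(-\#^m(S^1\times S^2),U_n)$ is a free $\mathbb{F}[U_1,\dots,U_n]$-module with the distinguished cycle $\mathbf{x}_{\phi_n}$ as a basis element, and all of the claims (non-vanishing of $\widehat{t}$, $t^=$, $t^-$, and non-divisibility by every $U_i$) drop out at once. You instead argue formally by induction: Lemma \ref{meridians} together with $c(\xi_{std})\neq 0$ handles $U_1$; the birth map of Lemma \ref{birth} (in its hat and $=$ flavors, using its $\mathbb{F}[U]$-linearity) carries non-vanishing and non-$U$-divisibility from $U_{n-1}$ to $U_n$; and the defining quotient $HFK^-\to HFK^=$, which sends $t^-$ to $t^=$ and intertwines each $U_i$ with $U$, converts the $HFK^=$ statement into the stated $HFK^-$ one. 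Each step checks out, and there is no circularity, since Lemmas \ref{meridians} and \ref{birth} are established independently of Lemma \ref{lemqpm}. As for what each approach buys: yours is purely functorial and avoids inspecting the model diagram for general $n$, while the paper's computation is shorter and yields strictly more, namely that $\langle t^-(U_n)\rangle$ is a free cyclic summand. That extra structure is what is implicitly used right after the lemma to define the projections $\widehat{G}$, $G^=$, $G^-$ onto $\langle t^\circ(U_n)\rangle$; non-divisibility alone does not formally produce a free summand (a generator of $\mathbb{F}[U]/U^2$ is not $U$-divisible, for instance), so if your argument were substituted into the paper, a supplementary remark would be needed before constructing those projections. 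As a proof of the lemma as literally stated, however, your argument is complete.
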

\begin{proof}
Let $\mathcal{H} = (S,\boldsymbol{\beta},\boldsymbol{\alpha},\bold{w},\bold{z})$ be the Heegaard diagram used in the definition of the transverse invariant coming from any basis of arcs for $\Sigma\smallsetminus \{p_1,\dots,p_n\}$. Since the pointed monodromy is trivial, the diagram gives rise to a $CFK^-$ complex with trivial differential. 
\end{proof}

This allows us to define projections:
\begin{gather*}
\widehat{G} :\widehat{HFK}(-\#^m(S^1\times S^2), U_n)\to \langle \widehat{t}(U_n)\rangle \simeq \mathbb{F},\\
G^= :HFK^=(-\#^m(S^1\times S^2), U_n)\to \langle t^=(U_n)\rangle \simeq\mathbb{F}[U],\\
\text{and}\hspace{.5cm} G^- :HFK^-(-\#^m(S^1\times S^2), U_n)\to \langle t^-(U_n)\rangle \simeq\mathbb{F}[U_1,\dots,U_n].
\end{gather*}

\subsection{Contact connected sums}
\label{conect}
There is a well defined notion of connected sum, in fact a prime decomposition theorem holds, in the contact category \cite{colin}. Given contact manifolds $(Y_1,\xi_1)$ and $(Y_2,\xi_2)$, we may take their connected sum to obtain a contact manifold $(Y_1\# Y_2,\xi_1\# \xi_2)$. Let $(\Sigma_i,\phi_i)$ be abstract open books giving rise to $(Y_i,\xi_i)$, and 
$\Sigma_1\sharp \Sigma_2$ denote the boundary connected sum of surfaces. $(\Sigma_1 \sharp \Sigma_2, \phi)$ is an abstract open book for the contact connected sum, where $\phi$ is the natural monodromy restricting to $\phi_i$ on $\Sigma_i$. This has an obvious extension to pointed monodromies if one wishes to keep track of transverse links within the summands. 4-dimensionally, this procedure corresponds to a Weinstein 1-handle attachment.

\begin{lemma}
\label{connectedsum}
Suppose that $K_1\subset (Y_1,\xi_1)$ and $K_2\subset(Y_2,\xi_2)$ are transverse links, then there exist homomorphisms of $\mathbb{F}, \mathbb{F}[U]$ and $\mathbb{F}[U_1,\dots,U_n]$-modules
\begin{gather*}
\widehat{HF} (-(Y_1\# Y_2))\to \widehat{HF} (-Y_1)\otimes \widehat{HF} (-Y_2)\\
HFK^\circ (-(Y_1\# Y_2),K_1)\to HFK^\circ (-Y_1,K_1)\otimes \widehat{HF} (-Y_2)\\
HFK^\circ (-(Y_1\# Y_2),K_1\cup K_2)\to HFK^\circ (-Y_1,K_1)\otimes HFK^\circ (-Y_2,K_2)
\end{gather*}
mapping 
\begin{gather*}
c(\xi_1 \# \xi_2)\to c(\xi_1)\otimes c(\xi_2)\\
t^\circ (K_1)\to t^\circ (K_1)\otimes c(\xi_2)\\
\text{and} \hspace{.5cm} t^\circ(K_1\cup K_2)\to t^\circ (K_1)\otimes t^\circ (K_2),
\end{gather*}
respectively. Here $\circ\in \{\wedge,=,-\}$.
\end{lemma}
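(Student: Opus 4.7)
The plan is to construct the maps uniformly using comultiplicativity of the transverse invariant (Theorems \ref{thm:comultiplication} and \ref{thm:comult}, Corollary \ref{comult=}) applied to the natural open-book factorization of the combined monodromy, together with the projections collected in Section \ref{nat}. I describe the argument for case (iii), the most involved; cases (i) and (ii) follow by the same construction together with Lemma \ref{meridians}.

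Choose open books $(\Sigma_i, \phi_i)$ supporting $(Y_i, \xi_i)$ and realize each $K_i$ as a braid with pointed monodromy $\psi_i \in Mod(\Sigma_i \setminus P_i, \partial \Sigma_i)$ extending $\phi_i$. On the boundary-connected sum $\Sigma := \Sigma_1 \sharp \Sigma_2$, let $\psi_i'$ denote the trivial extension of $\psi_i$ to $\Sigma$ with marked points $P_1 \cup P_2$. Since $\psi_1'$ and $\psi_2'$ have disjoint supports they commute, and their product is a pointed monodromy on $\Sigma$ whose transverse closure is $K_1 \cup K_2 \subset (Y_1 \# Y_2, \xi_1 \# \xi_2)$. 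Applying comultiplicativity to this factorization yields
\[
HFK^\circ(-(Y_1 \# Y_2), K_1 \cup K_2) \longrightarrow HFK^\circ(-Z_1, K_1 \cup U_{n_2}) \otimes HFK^\circ(-Z_2, K_2 \cup U_{n_1})
\]
sending $t^\circ(K_1 \cup K_2)$ to $t^\circ(K_1 \cup U_{n_2}) \otimes t^\circ(K_2 \cup U_{n_1})$. Here $n_j = |P_j|$; $N_j$ denotes the standard tight contact manifold supported by the open book with identity monodromy on $\Sigma_j$; $U_{n_j} \subset N_j$ is the collection of positively braided meridional unknots of the binding arising from the marked points $P_j$; and $Z_i := Y_i \# N_{3-i}$. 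A second application of comultiplicativity to each factor, splitting $\psi_i'$ as $\psi_i$ on $\Sigma_i$ composed with the identity on $\Sigma_{3-i}$, further decomposes each target factor as $HFK^\circ(-Y_i, K_i) \otimes HFK^\circ(-N_{3-i}, U_{n_{3-i}})$, still sending transverse invariants to tensor products of transverse invariants.

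To eliminate the auxiliary factors $HFK^\circ(-N_j, U_{n_j})$, compose with the natural projections $G^\circ$ from Section \ref{nat}, which send $t^\circ(U_{n_j})$ onto the generator of $\langle t^\circ(U_{n_j}) \rangle$ (non-vanishing by Lemma \ref{lemqpm}), and then apply Lemma \ref{birth} iteratively to clear the auxiliary basepoints in the hat and equals settings. Cases (i) and (ii) follow by specialization: case (ii) is obtained by omitting $K_2$ and using Lemma \ref{meridians} to identify $HFK^\circ$ of a binding-meridian unknot with $\widehat{HF}$, sending the transverse invariant to the contact invariant; case (i) by the analogous specialization on both sides. The main obstacle will be tracking the $\mathbb{F}[U_1,\dots,U_n]$-module structure in the minus theory: the auxiliary unknots $U_{n_j}$ contribute additional $U$-variables that must be identified compatibly with the component-wise structure on source and target through the intermediate comultiplications, and since Lemma \ref{birth} is stated only for the hat and equals versions, the minus case requires extra care, exploiting non-vanishing of $t^-(U_{n_j})$ and naturality of $G^-$ to obtain a well-defined projection at the module level.
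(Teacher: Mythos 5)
There is a genuine gap at your ``second application of comultiplicativity.'' Comultiplication (Theorem \ref{thm:comultiplication}, Theorem \ref{thm:comult}, Corollary \ref{comult=}) only factors a pointed monodromy on a \emph{fixed} pointed surface: all three closures involved are braids on the same page $\Sigma_1\sharp\Sigma_2$ with the same $n_1+n_2$ marked points. It therefore can never produce a factor of the form $HFK^\circ(-Y_i,K_i)$, whose defining open book is $(\Sigma_i,\phi_i)$ with only the points $P_i$; writing $\psi_i'$ as ``$\psi_i$ composed with the identity'' is just the trivial factorization $\psi_i'=\psi_i'\circ \mathrm{id}$, whose comultiplication introduces a factor $HFK^\circ(-(N_1\# N_2),U_{n_1+n_2})$ rather than splitting off $HFK^\circ(-N_{3-i},U_{n_{3-i}})$. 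After your (legitimate) first comultiplication you are stuck in $HFK^\circ\bigl(-(Y_1\# N_2),K_1\cup U_{n_2}\bigr)\otimes HFK^\circ\bigl(-(Y_2\# N_1),K_2\cup U_{n_1}\bigr)$, and none of the cited tools removes the extra $N_j=\#^{m_j}(S^1\times S^2)$ summands from the \emph{ambient manifolds}: $G^\circ$ is only defined on $HFK^\circ(-\#^m(S^1\times S^2),U_n)$ itself (closure of the trivial monodromy), and Lemma \ref{birth} deletes unknot components of the link while leaving the underlying three-manifold unchanged, so iterating it only gets you to $HFK^\circ(-(Y_1\# N_2),K_1)$, not to $HFK^\circ(-Y_1,K_1)$. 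The identification you would need, $HFK^\circ(-(Y_1\# N_2),K_1\cup U_{n_2})\to HFK^\circ(-Y_1,K_1)\otimes HFK^\circ(-N_2,U_{n_2})$ carrying $t^\circ$ to $t^\circ\otimes t^\circ$, is exactly an instance of the third map of the lemma being proved, so the argument is circular precisely where the work lies. (The unresolved ``extra care'' for the $\mathbb{F}[U_1,\dots,U_n]$-structure in the minus theory is a further loose end, since Lemma \ref{birth} is only stated for $\wedge$ and $=$.)

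For contrast, the paper does not use comultiplication here at all: it works at the chain level. Choosing pointed monodromies $g,g'$ for $K_1,K_2$ and taking the union of arc bases on the boundary connect sum of pages (plus one extra arc fixed by the glued monodromy $h'$), the Heegaard diagram for $(-(Y_1\# Y_2),K_1\cup K_2)$ splits as a connected sum of diagrams, giving $CFK^-(\mathcal{H}_{h'})=CFK^-(\mathcal{H}_g)\otimes CFK^-(\mathcal{H}_{g'})\otimes W$ with $\mathbf{x}_{h'}\mapsto \mathbf{x}_g\otimes\mathbf{x}_{g'}\otimes a$; projecting $W\to\langle a\rangle$ yields the desired module map preserving the invariants (and the first two maps are the same argument with $\widehat{CF}(\mathcal{H}_\phi)$ in place of one $CFK^-$ factor). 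If you want to salvage your outline, you would need to prove this Künneth-type splitting of the open-book diagrams yourself, at which point the comultiplication steps become unnecessary.
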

\begin{proof}
The constructions of the first two maps are identical, we construct the second. Let $g \in Mod(\Sigma \smallsetminus P, \partial \Sigma)$ denote a pointed monodromy having transverse closure $K_1\subset (Y_1,\xi_1)$, and $(S,\phi)$ an abstract open book specifying $(Y_2,\xi_2)$. There is a natural pointed monodromy $h\in Mod (\Sigma \sharp S \smallsetminus P, \partial (\Sigma \sharp S))$ which restricts to $g$ and $\phi$ on the two respective summands. By construction, $h$ has transverse closure $K_1 \subset (Y_1\# Y_2,\xi_1 \# \xi_2)$.

The union of any bases for $\Sigma\smallsetminus P$ and $S$ forms a basis for $\Sigma \sharp S \smallsetminus P$. In particular, Heegaard diagrams $\mathcal{H}_g$ and $\mathcal{H}_\phi$ used in the definition of $t^\circ (K_1)\subset HFK^\circ (-Y_1,K_1)$ and $c(\xi_2)$ can be connect summed to obtain a Heegaard diagram $\mathcal{H}_h$ defining\footnote{after identifying two of the $\bold{w}$-basepoints.} $t^\circ (K_1)\subset HFK^\circ (-(Y_1\#Y_2),K_1)$. There is an identification of chain complexes $CFK^-(\mathcal{H}_h) = CFK^- (\mathcal{H}_g)\otimes \widehat{CF}(\mathcal{H}_\phi)$ relating the generators $\bold{x}_h$ and $\bold{x}_g\otimes \bold{x}_\phi$, whose homology classes are the relevant invariants.

The construction of the third map is similar. Let $g'\in Mod(S\smallsetminus P',\partial S)$ denote a pointed monodromy having transverse closure $K_2\subset (Y_2,\xi_2)$. Let $h' \in Mod(\Sigma\sharp S \smallsetminus (P\cup P'),\partial (\Sigma\sharp S))$ denote the natural pointed monodromy which restricts to $g$ and $g'$ on the two respective summands. By construction, $h'$ has transverse closure $K_1\cup K_2\subset (Y_1\# Y_2,\xi_1\# \xi_2)$. The union of bases for $\Sigma\smallsetminus P$ and $S\smallsetminus P'$ can be extended to a basis for $\Sigma\sharp S \smallsetminus (P\cup P')$ by adding a single arc which is fixed by $h'$. By considering such a basis, we obtain a Heegaard diagram $\mathcal{H}_{h'}$ defining the invariant $t^\circ (K_1\cup K_2)$, this coincides with taking the connected sum of diagrams $\mathcal{H}_g$ and $\mathcal{H}_{g'}$ and adding a single $\alpha$/$\beta$-curve pair, coming from the new basis arc fixed by $h'$, giving rise to an identification of chain complexes
\[
CFK^- (\mathcal{H}_{h'})= CFK^- (\mathcal{H}_g)\otimes CFK^- (\mathcal{H}_{g'})\otimes W
\]
where $W= \langle a,b\rangle$ is a 2-dimensional $\mathbb{F}$-vector space with trivial differential. Under this identification $\bold{x}_{h'} \to \bold{x}_g \otimes \bold{x}_{g'}\otimes a$. Taking homology and projecting $W\to \langle a\rangle$ gives the desired map relating the invariants.
\end{proof}

\section{Quasipositive surfaces and comultiplication maps}
\label{secqpm}



Theorem \ref{qpm} follows from the following more general statement for links.
\begin{theorem}
\label{qpm1}
Let $(B,\pi)$ be an open book supporting $(Y,\xi)$, and $S\subset \Sigma = \pi^{-1}(0)$ a subsurface whose oriented boundary is the union of links $K_1$ and $K_2$. If $K_1$ is boundary compatible,
there exists an $\mathbb{F}[U]$-module homomorphism
\[
f_S: HFK^{=}(-Y, K_1)\to HFK^{=}(-Y, -K_2)
\]
preserving the transverse invariant, i.e. $f_S \big{(}t^=(K_1)\big{)} = t^=(-K_2)$. Moreover, we have a relationship between the $\widehat{t}$-invariants, if $\widehat{t}(-K_2)\ne 0$ then $\widehat{t}(K_1)\ne 0$. 
\end{theorem}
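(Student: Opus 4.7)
The plan is to factor the pointed monodromy whose closure is $T(K_1)$ as a product of a pointed monodromy whose closure is $T(-K_2)$ and a ``positive'' pointed monodromy supported on $S$, then apply the comultiplicativity of the transverse invariant (Corollary \ref{comult=}, Theorem \ref{thm:comult}) and project onto the resulting distinguished class.

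\emph{Step 1 (Factoring the monodromy).} Using Theorem \ref{thmqpgraph}, I realize $S$ as the ribbon of a Legendrian graph $\Gamma\subset S$; a handle decomposition of $R(\Gamma)$ by 0-handles (one per vertex) and 1-handles (bands, one per edge) views $S$ as built from a collar of $-K_2$ by successive band attachments. I extend the auxiliary arcs $\{\alpha_i\}$ for $K_1$ across $S$ to obtain a set of auxiliary arcs $\{\alpha_j'\}$ witnessing the boundary-compatibility of $-K_2$, so that $\psi_{-K_2}:=\phi\circ\prod_j\pi_{\alpha_j'}$ has transverse closure $T(-K_2)$ by Proposition \ref{uniquetrans}. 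Each band attachment, traversed in the order above, corresponds at the level of the pointed mapping class group to a right-handed half-twist $\sigma_{\gamma_k}$ (for a band joining distinct components of the current boundary) or a push-map/right-handed Dehn twist (for a band joining a component to itself), together with the bookkeeping required to push the basepoints from positions near $-K_2$ to positions near $K_1$. Arguing as in the proof of Lemma \ref{disjoint} and Proposition \ref{approx}, successive applications of conjugation, positive Markov (de)stabilization, and isotopy in the pointed mapping class group identify the product of these positive twists, call it $\psi_+$, so that
\[
\psi_{K_1}\;=\;\psi_+\circ\psi_{-K_2}
\]
as elements of the appropriate $\mathrm{Mod}(\Sigma\smallsetminus P,\partial\Sigma)$, and so that the transverse closure of $\psi_{K_1}$ is $T(K_1)$.

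\emph{Step 2 (Comultiplication and projection).} Applying Corollary \ref{comult=} to this factorization yields an $\mathbb{F}[U]$-linear homomorphism
\[
\mu^=\colon HFK^=(-Y,K_1)\;\longrightarrow\;HFK^=(-Y,-K_2)\otimes HFK^=(-Y_+,K_+),
\]
sending $t^=(K_1)\mapsto t^=(-K_2)\otimes t^=(K_+)$, where $K_+$ is the closure of $\psi_+$. Since $\psi_+$ is a product of right-handed Dehn and half twists, iterating Proposition \ref{prop:nonzero} through Theorem \ref{thm:comult} gives $\widehat{t}(K_+)\ne 0$; in particular $t^=(K_+)$ is a nonzero, $U$-nontorsion class generating a free $\mathbb{F}[U]$-summand of $HFK^=(-Y_+,K_+)$, admitting a projection $p\colon HFK^=(-Y_+,K_+)\to \mathbb{F}[U]$ with $p(t^=(K_+))=1$ (cf.\ the quotients of Section \ref{nat}). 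I then define
\[
f_S\;:=\;(\mathrm{id}\otimes p)\circ\mu^=,
\]
which is $\mathbb{F}[U]$-linear and satisfies $f_S(t^=(K_1))=t^=(-K_2)$.

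\emph{Step 3 (The $\widehat{t}$ statement).} For the last assertion I apply Theorem \ref{thm:comult} to the same factorization at the level of pointed monodromies: if $\widehat{t}(-K_2)\ne 0$, then $\widetilde{t}(\psi_{-K_2})\ne 0$ by Lemma \ref{tildenonvanishing}; combined with $\widetilde{t}(\psi_+)\ne 0$ (obtained inductively from Proposition \ref{prop:nonzero}), Theorem \ref{thm:comult} gives $\widetilde{t}(\psi_{K_1})\ne 0$, hence $\widehat{t}(K_1)\ne 0$ again by Lemma \ref{tildenonvanishing}.

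\emph{Main obstacle.} The crux of the argument is Step 1: explicitly producing the factorization $\psi_{K_1}=\psi_+\circ\psi_{-K_2}$ in the pointed mapping class group, with $\psi_+$ a product of \emph{positive} twists read off from the handle decomposition of $S$. This requires carefully tracking auxiliary arcs and basepoints as they are pushed through each band of $S$, and verifying, via the arc-slide and Markov-stabilization techniques developed in Section \ref{boundarycomp}, that the resulting composition agrees with the canonical braiding of $K_1$; the positivity of every factor of $\psi_+$ is what enables the non-vanishing projection in Step 2 and the $\widehat{t}$-statement in Step 3.
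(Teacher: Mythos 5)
The overall strategy (factor the braiding and apply comultiplicativity, then project) is the right one, but Step 2 has a genuine gap. After comultiplying once against the entire positive factor $\psi_+$, you need an $\mathbb{F}[U]$-linear functional $p\colon HFK^{=}(-Y_+,K_+)\to\mathbb{F}[U]$ with $p\big(t^=(K_+)\big)=1$. Any such functional kills torsion, so its existence requires that the image of $t^=(K_+)$ in the torsion-free quotient of $HFK^{=}(-Y_+,K_+)$ be indivisible by $U$; in particular $t^=(K_+)$ must be non-torsion. Non-vanishing of $\widehat{t}(K_+)$, which is all that Proposition \ref{prop:nonzero} and Theorem \ref{thm:comult} can give you, does not imply this (an element of a torsion summand can be nonzero after setting $U=0$ and still admit no functional to $\mathbb{F}[U]$ taking the value $1$). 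The quotients of Section \ref{nat} that you invoke are defined only for $U_n$, the closure of the \emph{trivial} pointed monodromy, precisely because there the complex has trivial differential (Lemma \ref{lemqpm}), so $HFK^{=}$ is visibly a free $\mathbb{F}[U]$-module with $t^=(U_n)$ as a generator. For your $K_+\subset Y_+$, whose underlying monodromy is a nontrivial product of Dehn twists, no such structural statement is available, so the projection $p$ -- and with it the equality $f_S(t^=(K_1))=t^=(-K_2)$ -- is unjustified.

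This is exactly the difficulty the paper's proof is engineered to avoid: a Morse function cuts $S$ into elementary pieces, each a pair of pants union cylinders, and Lemma \ref{pants} shows by an explicit pointed-mapping-class computation (conjugations, a single positive Markov (de)stabilization, and a check on the images of cutting arcs) that at each stage the braiding of the outgoing boundary is the braiding of the incoming boundary composed with a \emph{single positive half twist} $\sigma_\gamma$. The cofactor in each comultiplication is therefore the trivial-monodromy closure $U_n\subset(\#^m(S^1\times S^2),\xi_{std})$, where the projection $G^=$ exists, and $f_S$ is the composite of the elementary maps. Two further problems in your Step 1 would also need repair: push-maps are products $\tau_{\delta_1}\circ\tau_{\delta_2}^{-1}$ of a positive and a negative twist, so a $\psi_+$ containing push-map factors is not positive and Proposition \ref{prop:nonzero} does not apply to it (this also undermines the inductive non-vanishing claim in Step 3); and the braid index changes as bands are attached, so a single factorization $\psi_{K_1}=\psi_+\circ\psi_{-K_2}$ inside one fixed pointed mapping class group requires Markov (de)stabilizations you have not tracked. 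Both issues are handled automatically when the factorization is performed one pair of pants at a time, which is why the paper proceeds that way.
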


We first consider the case that $S$ is a pair of pants.
\begin{lemma}
\label{pants}
Let $(B,\pi)$ be an open book supporting $(Y,\xi)$, and $S\subset \Sigma = \pi^{-1}(0)$ a pair of pants whose oriented boundary is the union of two links $K_1,K_2$. 
Let $K\subset \Sigma\smallsetminus S$ be a third link such that $K\cup K_1$ is boundary compatible.
Then, there exists a homomorphism
\[
f_S: HFK^=(-Y,K\cup K_1)\to HFK^=(-Y,K\cup -K_2)
\]
of $\mathbb{F}[U]$-modules preserving the transverse invariant, i.e. $f\big{(}t^=(K\cup K_1)\big{)} = t^=(K\cup -K_2)$. Moreover, if $\widehat{t}(K\cup -K_2)\ne 0$ then $\widehat{t}(K\cup K_1)\ne 0$. 
\end{lemma}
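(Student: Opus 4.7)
The plan is to realize $f_S$ as the composition of a comultiplication map from Corollary \ref{comult=}, a projection onto a free $\mathbb{F}[U]$-summand generated by the invariant of a half-twist closure, and the birth quotient of Lemma \ref{birth}. Assume without loss of generality that $K_1 = K_{1a} \cup K_{1b}$ has two components and $-K_2$ is a single component; the other case is handled by an analogous factorization with the roles reversed. Boundary compatibility of $K \cup K_1$ furnishes auxiliary arcs, producing a pointed monodromy $\psi = \phi \circ \prod_i \pi_{\alpha_i} \circ \pi_{\alpha_{1a}} \circ \pi_{\alpha_{1b}}$ whose transverse closure is $K \cup K_1$. Arrange the auxiliary arcs so that $\alpha_{1a}$ and $\alpha_{1b}$ exit $\partial \Sigma$ at adjacent marked points $p_{1a}, p_{1b}$ in the collar, run parallel through $\Sigma \smallsetminus S$, and split only upon entering $S$. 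Let $\alpha_{-K_2}$ be an auxiliary arc for $-K_2$ agreeing with $\alpha_{1a}$ outside of $S$, let $\psi'$ be the associated pointed monodromy for $K \cup -K_2$, and let $\widetilde{\psi}'$ be its trivial extension that also fixes $p_{1b}$, so that the transverse closure of $\widetilde{\psi}'$ is $K \cup -K_2 \cup U_1$, where $U_1$ is a small positively braided meridional unknot arising from $p_{1b}$.

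The technical heart of the argument is the pants-relation identity
\[
\psi \;=\; \widetilde{\psi}' \circ \sigma_\gamma \quad \in \; \mathrm{Mod}\bigl(\Sigma \smallsetminus \{p_1,\dots,p_n,p_{1a},p_{1b}\},\, \partial \Sigma\bigr),
\]
where $\sigma_\gamma$ is the right-hand half twist along an arc $\gamma \subset S$ from $p_{1a}$ to $p_{1b}$ obtained by joining $\alpha_{1a}$ and $\alpha_{1b}$ through the pair of pants. Geometrically, this says that the product of two push-maps along adjacent boundary components of $S$ equals a single push along the third boundary, composed with a half twist interchanging the two marked points. The identity is verified by comparing the actions of both sides on a system of reference arcs that cut a neighborhood of $S$ into a disk, in the style of Lemma \ref{disjoint} and Proposition \ref{approx}; this is where the main technical difficulty lies.

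Granted the identity, Corollary \ref{comult=} produces
\[
\mu^=\colon HFK^=(-Y, K \cup K_1) \longrightarrow HFK^=(-Y_\sigma, K_\sigma) \otimes HFK^=(-Y, K \cup -K_2 \cup U_1),
\]
sending $t^=(K \cup K_1) \mapsto t^=(K_\sigma) \otimes t^=(K \cup -K_2 \cup U_1)$. Since $\sigma_\gamma$ is trivial in the unpointed mapping class group, $(Y_\sigma, \xi_\sigma)$ is a tight $\#^m(S^1 \times S^2)$. By Proposition \ref{prop:nonzero} we have $\widehat{t}(K_\sigma) \neq 0$, and the argument of Lemma \ref{lemqpm} shows that $t^=(K_\sigma)$ generates a free $\mathbb{F}[U]$-summand of $HFK^=(-Y_\sigma, K_\sigma)$, yielding a projection onto $\mathbb{F}[U]$ sending $t^=(K_\sigma) \mapsto 1$. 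Composing $\mu^=$ with this projection and with the birth quotient of Lemma \ref{birth} produces the desired map $f_S$ satisfying $f_S\bigl(t^=(K \cup K_1)\bigr) = t^=(K \cup -K_2)$. The hat statement follows from the ``in particular'' clause of Theorem \ref{thm:comult}: if $\widehat{t}(K \cup -K_2) \neq 0$, then the hat version of Lemma \ref{birth} forces $\widehat{t}(K \cup -K_2 \cup U_1) \neq 0$, and combined with $\widehat{t}(K_\sigma) \neq 0$ we conclude $\widehat{t}(K \cup K_1) \neq 0$.
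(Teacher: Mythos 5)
Your overall strategy (factor the braiding of $K\cup K_1$ as something whose closure is essentially $K\cup -K_2$ composed with a right-hand half twist, then apply Corollary \ref{comult=} and project using $G^=$) is the same as the paper's, but the central ``pants-relation identity'' you assert is false, and it is exactly the step you defer as ``the main technical difficulty.'' Concretely, your $\psi=\phi\circ\prod_i\pi_{\alpha_i}\circ\pi_{\alpha_{1a}}\circ\pi_{\alpha_{1b}}$ is a composition of push-maps and a trivial extension of $\phi$, so it fixes each marked point; in particular it is pure on $\{p_{1a},p_{1b}\}$. On the other hand $\widetilde{\psi}'$ is pure by construction and $\sigma_\gamma$ transposes $p_{1a}$ and $p_{1b}$, so $\widetilde{\psi}'\circ\sigma_\gamma$ induces the transposition. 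Since the induced permutation of the marked points is an invariant of a pointed mapping class (equivalently: the closure of the right-hand side has the strands through $p_{1a},p_{1b}$ joined into a single component, while the closure of $\psi$ has them as the two components $K_{1a},K_{1b}$), the identity $\psi=\widetilde{\psi}'\circ\sigma_\gamma$ cannot hold. The correct relation does not express the product of the two boundary push-maps as a pure extension times one half twist; the half twist has to be interleaved with the pushes (in the paper's notation, $\pi_{\alpha_2}\circ\sigma_\gamma^{-1}\circ\pi_{\alpha_1}\simeq\sigma_\gamma\circ\pi_\alpha$, verified by the Alexander-method argument on arcs cutting $S\smallsetminus\{p_1,p_2\}$ into a disk). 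The upshot of the correct relation is that the left factor in the factorization $\psi\circ\sigma_\gamma$ is, up to conjugation and a positive Markov (de)stabilization, a braiding of $K\cup -K_2$ itself, with the connected piece of $-K_2$ braided on both marked points --- not the split link $K\cup -K_2\cup U_1$. Once that is in place no appeal to Lemma \ref{birth} is needed: the comultiplication target is $HFK^=(-Y,K\cup -K_2)\otimes HFK^=(-\#^m(S^1\times S^2),U_n)$ and composing with $\mathds{1}\otimes G^=$ finishes the minus/$=$ statement, with the hat statement following from Theorem \ref{thm:comult} and $\widehat{t}(U_n)\ne 0$ exactly as you outline.

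Two smaller points. First, your ``WLOG'' is not quite right: the case of connected $K_1$ is not literally the same argument with roles reversed --- there one must first perform a positive Markov stabilization along $\gamma$ (plus conjugations) to introduce the second marked point before the pants relation can be applied, whereas in the two-component case a Markov destabilization appears at the end; both cases must be treated. Second, your justification for projecting off the half-twist factor is shaky as written: ``the argument of Lemma \ref{lemqpm}'' (trivial differential for the trivial pointed monodromy) does not apply directly to the diagram of $\sigma_\gamma$. The clean route is to note that the closure of $\sigma_\gamma$ is transversely isotopic to $U_n$ (it is a positive Markov stabilization of a trivial braiding), so that Lemma \ref{lemqpm} and the projection $G^=$ apply verbatim after invoking invariance.
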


\begin{proof}
Because of the orientation conditions, note that $K\cup -K_2$ is boundary compatible as well, so in light of Theorem \ref{uniquetrans} both links are naturally transverse.

We claim that there exists a pointed monodromy $\psi$ having transverse closure $K\cup -K_2$, and a right-hand half twist $\sigma_\gamma$ such that $\psi\circ \sigma_\gamma$ has transverse closure $K\cup K_1$. 
Assuming this claim for the moment, we construct the map $f_S$.

The monodromy $\sigma_\gamma$ has transverse closure $U_n\subset (\#^m (S^1\times S^2),\xi_{std})$.
By Corollary \ref{comult=}, there exists a comultiplication map
\[
\mu^=: HFK^= (-Y,K\cup K_1)\to HFK^= (-Y, K\cup -K_2)\otimes HFK^= (-\#^m (S^1\times S^2), U_n)
\]
sending \[t^= (K\cup K_1) = t^=(\psi\circ \sigma_\gamma) \to t^= (\psi)\otimes t^= (\sigma_\gamma) = t^= (K\cup -K_2)\otimes t^= (U_n) .\]
Composing this map with $\mathds{1}\otimes G^=$, we obtain the desired map $f_S$.
Also, because $\widehat{t}(U_n)\ne 0$ by Lemma \ref{lemqpm}, Theorem \ref{thm:comult} gives
\[
\widehat{t}(K\cup -K_2)\ne 0 \implies \widehat{t}(K\cup K_1)\ne 0.
\]

There are two cases, corresponding to whether $K_1$ is connected or not.

First suppose that $K_1$ is connected.
Let $\{\delta_i\}\cup \alpha$ denote a set of auxiliary arcs for $K\cup K_1$, so that $\alpha$ is an auxiliary arc for $K_1$. The arc $\alpha$ guides an isotopy of $K_1$, as in the discussion following Definition \ref{boundary}. Carrying the surface $S$ along with this isotopy, we see a disk $D$ in a collar neighborhood of $K_1\subset S$ which is fixed by the open book monodromy $\phi$. 
By construction, the monodromy
\[
\pi_\alpha\circ \prod_i \pi_{\delta_i} \circ \phi
\]
has transverse closure $K\cup K_1$.
By positively Markov stabilizing along an arc $\gamma$ and conjugating, we obtain a pointed monodromy
\[
\prod_i \pi_{\delta_i} \circ \phi\circ \sigma_\gamma \circ \pi_\alpha
\]
having transverse closure $K\cup K_1$ - see the left side of Figure \ref{fig:split}.
Consider the push maps $\pi_{\alpha_1}$ and $\pi_{\alpha_2}$ pictured in the center of Figure \ref{fig:split}.

\begin{figure}[h]
\def\svgwidth{400pt}
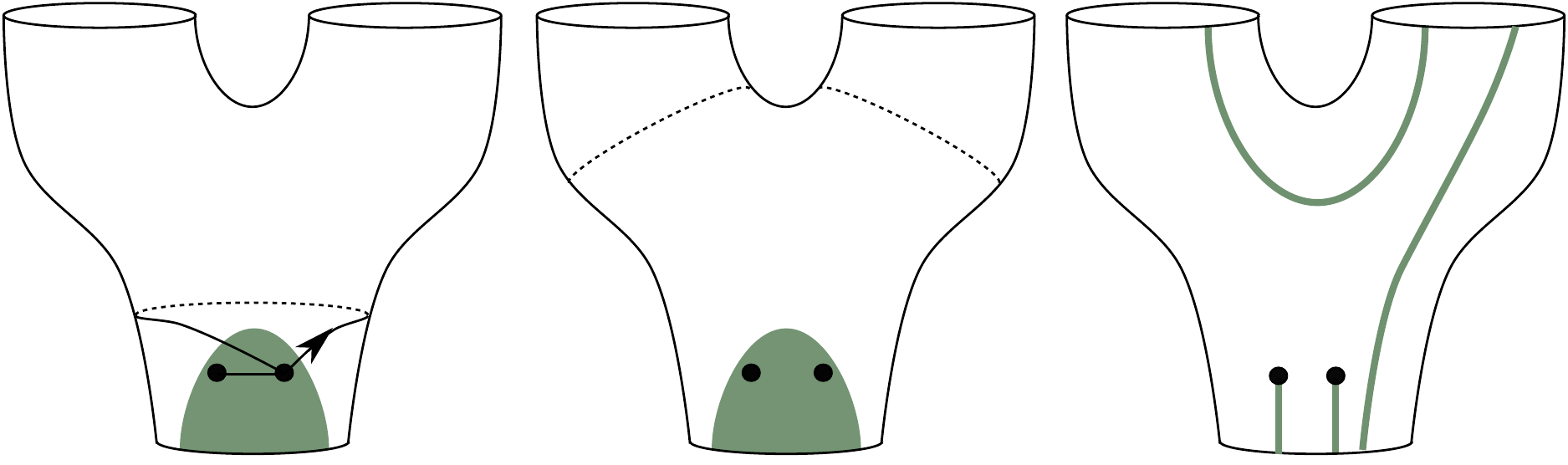
\caption{The pair of pants $S$. Left and center, the intersection of $S$ with a collar neighborhood of $\partial \Sigma $ is shaded. On the right, the arcs $\{\beta_i\}$ are lightly shaded, their images appear in black.}
\label{fig:split}
\end{figure}

The monodromies $\sigma_\gamma \circ \pi_\alpha$ and $\pi_{\alpha_2}\circ \sigma_\gamma \circ \pi_{\alpha_1}$ are supported in $S\smallsetminus \{p_1,p_2\}$, we claim they are isotopic. Consider the set of properly embedded arcs $\{\beta_1,\beta_2,\beta_3,\beta_4\}\subset S\smallsetminus \{p_1,p_2\}$, shown on the right side of Figure \ref{fig:split}, which cut the surface into a disk having no marked points. The two pointed monodromies act identically on these acts, establishing the claim.

Applying the established relation and conjugating, we see that
\[
\prod_i \pi_{\delta_i} \circ \phi\circ \sigma_\gamma \circ \pi_\alpha \sim \Big{(}\pi_{\alpha_1}\circ \prod_i \pi_{\delta_i} \circ \phi\circ \pi_{\alpha_2}\Big{)}\circ \sigma_\gamma =: \psi \circ \sigma_\gamma
\]
has transverse closure $K\cup K_1$. By construction, $\psi$ has transverse closure $K\cup -K_2$. 

We turn to the case that $K_1$ has two components. Let $\{\delta_i\}\cup \{\alpha_1,\alpha_2\}$ denote a set of auxiliary arcs for $K\cup K_1$, so that $\{\alpha_1,\alpha_2\}$ is a set of auxiliary arcs for $K_1$. The auxiliary arcs for $K_1$ guide an isotopy of $S$ so that there are two disks in $S$ fixed by the open book monodromy $\phi$, one in each component of the collar neighborhood of $K_1\subset S$.
By construction, the monodromy
\[
\pi_{\alpha_1}\circ \prod_i \pi_{\delta_i} \circ \phi\circ \pi_{\alpha_2} = \Big{(}\pi_{\alpha_1}\circ \prod_i \pi_{\delta_i} \circ \phi\circ \pi_{\alpha_2}\circ \sigma_{\gamma}^{-1}\Big{)}\circ \sigma_\gamma =: \psi \circ \sigma_\gamma
\]
has transverse closure $K\cup K_1$, where $\gamma$ is the arc pictured in Figure \ref{fig:merge}.

\begin{figure}[h]
\def\svgwidth{400pt}
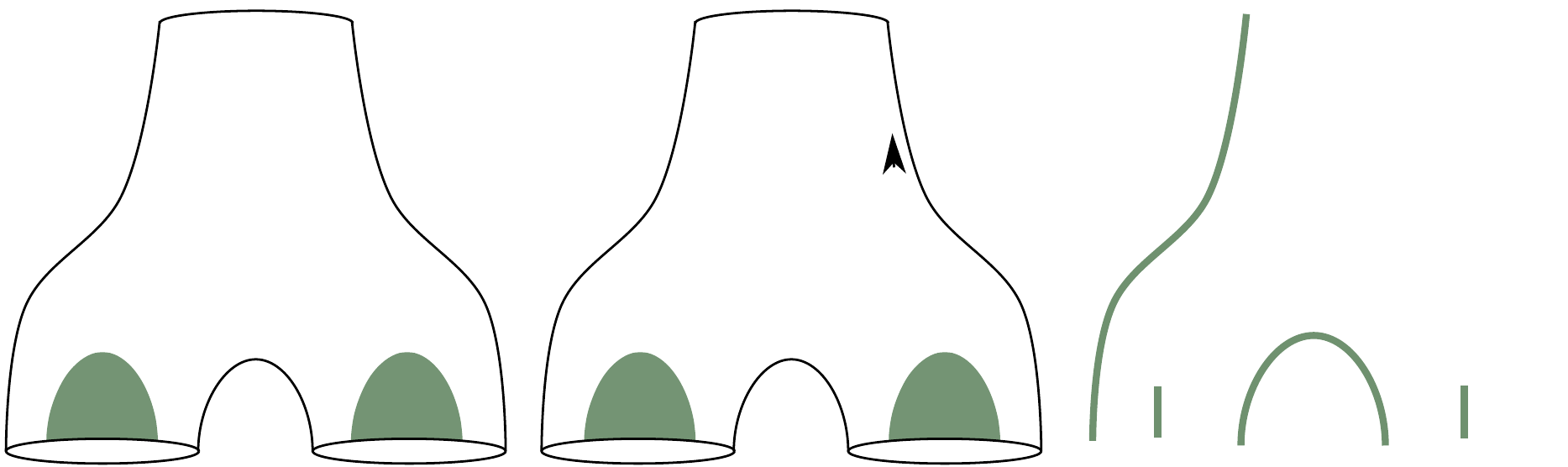
\caption{The pair of pants $S$. Left and center, the intersection of $S$ with a collar neighborhood of $\partial \Sigma $ is shaded. On the right, the arcs $\{\beta_i\}$ are lightly shaded, their images appear in black.}
\label{fig:merge}
\end{figure}

Conjugating $\psi$, we obtain
\[
\psi = \pi_{\alpha_1}\circ \prod_i \pi_{\delta_i} \circ \phi\circ \pi_{\alpha_2}\circ \sigma_{\gamma}^{-1} \sim \prod_i \pi_{\delta_i} \circ \phi\circ \pi_{\alpha_2}\circ \sigma_{\gamma}^{-1}\circ \pi_{\alpha_1}.
\]
Consider the push-map $\pi_\alpha$ pictured in the center  Figure \ref{fig:merge}. The monodromies $\pi_{\alpha_2}\circ \sigma_{\gamma}^{-1}\circ \pi_{\alpha_1}$ and $\sigma_\gamma \circ \pi_\alpha$ are both supported in $S$, we claim they are isotopic. Consider the set of properly embedded arcs $\{\beta_1,\beta_2,\beta_3,\beta_4\}\subset S\smallsetminus \{p_1,p_2\}$, pictured on the right side of Figure \ref{fig:merge}, which cut the surface into a disk having no marked points. The images of $\beta$-arcs under both monodromies are isotopic, the claim follows. 

By applying the above relation, along with a sequence of conjugations and a Markov destabilization, we see that
\[
\psi\sim \pi_{\alpha_1}^{-1} \circ \psi\circ \pi_{\alpha_1} = \prod_i \pi_{\delta_i} \circ \phi\circ \sigma_\gamma \circ \pi_\alpha \sim \pi_\alpha \circ \prod_i \pi_{\delta_i} \circ \phi\circ \sigma_\gamma \xrightarrow{\text{destabilize}} \pi_\alpha \circ \prod_i \pi_{\delta_i} \circ \phi
\]
has transverse closure $K\cup -K_2$.

\end{proof}

\begin{proof}[Proof of Theorem \ref{qpm1}]

We choose a Morse function $f:S\to [0,1]$ satisfying
\begin{itemize}
\item $f^{-1}(0) = K_1$,
\item $f^{-1}(1) = K_2$, 
\item the critical points of $f$ are index all one, and 
\item $f$ takes on distinct values $0<c_1<\dots<c_n<1$ at the critical points.
\end{itemize}

The function $f$ specifies a decomposition of $S$ into $n$-subsurfaces $\{S_i\}_{i=1}^{n}$ in the usual way.
Set $c_0 = -c_1$ and $c_{n+1} = 2-c_n$, and let $S_i := f^{-1}([\frac{c_i+c_{i-1}}{2},\frac{c_{i+1}+c_{i}}{2}])$. 
Each $S_i$ is a disjoint union a pair of pants along with some number of cylinders.
For each $1\le i\le n$, denote by $L_i$ the oriented component $f^{-1} (\frac{c_i+c_{i-1}}{2})$ of $\partial S_i$.
Note that $K_1=L_1$ and set $L_{n+1}:= -K_2$.

By Lemma \ref{pants}, each subsurface $S_i$ gives rise to a map
\[
f_{S_i}: HFK^{=}(-Y, L_i)\to HFK^{=}(-Y, L_{i+1})
\]
preserving the transverse invariant, $t^=(L_i)\xrightarrow{f_{S_i}} t^= (L_{i+1})$. The map desired is the composition
\[
f_S:= f_{S_n}\circ\dots\circ f_{S_2}\circ f_{S_1}:  HFK^{=}(-Y, K_1)\to HFK^{=}(-Y, -K_2).
\]

Lemma \ref{pants} also gives that
\[
\widehat{t}(-K_2) = \widehat{t}(L_{n+1})\ne 0 \implies \widehat{t}(L_n)\ne 0 \implies \dots \implies \widehat{t}(L_1) = \widehat{t}(K_1)\ne 0.
\]
\end{proof}

\begin{corollary}
\label{corsqp}
Suppose $K\subset (Y,\xi)$ is a strongly quasipositive link in a contact manifold having non-vanishing contact invariant, $c(\xi)\ne 0$. 
Let $S$ be a quasipositive Seifert surface for $K$, realized as a $\pi_1$-injective subsurface of a page of an open book $(B,\pi)$ supporting $(Y,\xi)$; this realizes $K$ as a boundary compatible link in the page and hence as a transverse link by Theorem \ref{uniquetransapprox}. Then $K$ has non-vanishing transverse invariant, $\widehat{t}(K)\ne 0$.

\end{corollary}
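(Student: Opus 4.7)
The plan is to apply Theorem \ref{qpm1} to the punctured surface $S' := S \setminus \mathrm{int}(D)$, where $D$ is a small embedded disk in the interior of $S$, and to identify the resulting target invariant with $c(\xi)$ via Lemma \ref{meridians}. Inheriting its orientation from $\Sigma$, the surface $S'$ has oriented boundary $K \sqcup (-\partial D)$, where $\partial D$ carries the orientation induced from $D$ (so that the hole in $S'$ contributes the boundary with reversed orientation).

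A direct check against Definition \ref{boundary}, carried out in a planar chart around $D$ with an auxiliary arc that enters $\Sigma$ from $\partial \Sigma$ and runs inward to $\partial D$, shows that $\partial D$ with its induced orientation is boundary compatible while $-\partial D$ is not. Consequently Theorem \ref{uniquetransapprox} furnishes a well-defined transverse representative of $\partial D$, and the explicit braiding recipe of Section \ref{boundarycomp} identifies this representative with a positively braided meridional unknot $U_1$ of the binding component approached by the auxiliary arc: the recipe isotopes $\partial D$ into a collar of $\partial \Sigma$ and connect-sums it with precisely such a meridian. Lemma \ref{meridians} then yields $\widehat{t}(\partial D) = \widehat{t}(U_1) = c(\xi)$, which is non-zero by hypothesis.

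With this in hand, I apply Theorem \ref{qpm1} to $S'$ taking $K_1 = K$ (boundary compatible by the corollary's hypothesis) and $K_2 = -\partial D$, with auxiliary arcs for $K$ and $\partial D$ made disjoint by general position. The theorem produces an $\mathbb{F}[U]$-module homomorphism
\[
f_{S'} : HFK^{=}(-Y, K) \longrightarrow HFK^{=}(-Y, \partial D)
\]
together with the implication $\widehat{t}(\partial D) \neq 0 \Rightarrow \widehat{t}(K) \neq 0$. Combined with the identification $\widehat{t}(\partial D) = c(\xi) \neq 0$ from the previous paragraph, this gives the desired conclusion $\widehat{t}(K) \neq 0$.

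The main subtlety is the orientation bookkeeping: one must verify, via the local computation described above, both that $\partial D$ (rather than $-\partial D$) is the boundary-compatible orientation and that its braided transverse representative is the positively braided meridian rather than a negatively stabilized unknot (whose $\widehat{t}$-invariant would vanish, rendering Theorem \ref{qpm1} useless). Once these conventions are fixed the rest of the argument is a direct composition of the results already in hand.
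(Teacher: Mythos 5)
Your proposal is correct and follows essentially the same route as the paper's proof: remove a disk $D$ from $S$, apply Theorem \ref{qpm1} with $K_1=K$ and the reversed disk boundary as $-K_2$, identify its natural transverse representative with the positively braided meridian $U_1$, and conclude via Lemma \ref{meridians} and $c(\xi)\ne 0$. The only difference is that you spell out the orientation and boundary-compatibility bookkeeping for $\partial D$, which the paper leaves implicit.
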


\begin{proof}
Let $S'$ denote the surface obtained by removing a disk $D$ from the interior of $S$. Let $U$ denote the unknotted boundary of $D$.
The natural transverse representative of $-U$ coming from Theorem \ref{uniquetransapprox} is $U_1$. 
By Theorem \ref{qpm1}, it suffices to check that $\widehat{t}(U_1)\ne 0$. This follows from $c(\xi)\ne 0$ via the isomorphism of Lemma \ref{meridians}.



\end{proof}

\begin{corollary}
\label{qpmap}
Let $(B,\pi)$ be an open book supporting $(Y,\xi)$, and $S\subset \Sigma=\pi^{-1}(0)$ a subsurface. Suppose that the oriented boundary of $S$ is the union of two non-isolating links $L_1,L_2$. 
Then, there are homomorphisms 
\begin{align*}
f_{1}: HFK^=(-Y,L_1)\to HFK^=(-Y,-L_2)\\
f_{2}: HFK^=(-Y,L_2)\to HFK^=(-Y,-L_1)
\end{align*}
preserving the Legendrian invariant, i.e. $f_{1}\big{(}t^{=}(L_1)\big{)} = t^{=}(-L_2)$ and $f_{2}\big{(}t^=(L_2)\big{)}= t^=(-L_1)$. 
Moreover, $\widehat{t}(-L_2)\ne 0 \implies \widehat{t}(L_1)\ne 0$ and $\widehat{t}(-L_1)\ne 0 \implies \widehat{t}(L_2)\ne 0$.
\end{corollary}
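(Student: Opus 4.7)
The plan is to deduce this corollary directly from Theorem \ref{qpm1} applied twice to the same surface $S$, once with each labeling of the two boundary components as $K_1$ and $K_2$. The only auxiliary observation needed is that the non-isolating hypothesis implies boundary compatibility for each $L_i$, regardless of the orientation, so both applications fit the hypothesis of Theorem \ref{qpm1}.

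To verify this observation, I would argue as follows. Since every component of $\Sigma\smallsetminus L_i$ meets $\partial\Sigma$, for each oriented component of $L_i$ the local ``right'' side lies in some complementary region whose closure reaches $\partial\Sigma$. So one may choose a properly embedded oriented arc from $\partial\Sigma$ to that component, lying on the appropriate side, satisfying the two orientation conditions of Definition \ref{boundary}. Working in disjoint collar neighborhoods of distinct boundary intersections, these auxiliary arcs can be chosen pairwise disjoint. Hence $L_1$ and $L_2$ are boundary compatible with respect to either orientation. By Theorem \ref{uniquetransapprox}, the transverse representative produced for a non-isolating boundary compatible link is precisely the positive transverse push-off of the Legendrian representative from Theorem \ref{uniqueleg}, so $t^=(\pm L_i)$ and $\widehat{t}(\pm L_i)$ are unambiguous Legendrian invariants.

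For the map $f_1$, I would apply Theorem \ref{qpm1} to $S$ using the oriented boundary decomposition $\partial S = L_1 \cup L_2$, with $K_1 := L_1$ and $K_2 := L_2$. Boundary compatibility of $L_1$ verifies the hypothesis, and the theorem yields the desired map $f_1: HFK^=(-Y,L_1)\to HFK^=(-Y,-L_2)$ sending $t^=(L_1)$ to $t^=(-L_2)$, together with the $\widehat{t}$-implication $\widehat{t}(-L_2)\ne 0 \Rightarrow \widehat{t}(L_1)\ne 0$. For $f_2$, I would apply Theorem \ref{qpm1} to the same surface $S$ with the roles of the boundary components swapped, setting $K_1 := L_2$ and $K_2 := L_1$; boundary compatibility of $L_2$ (from the non-isolating hypothesis) legitimizes this second application and produces $f_2: HFK^=(-Y,L_2)\to HFK^=(-Y,-L_1)$ with the analogous naturality properties.

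I do not anticipate any serious obstacle: the entire analytic and combinatorial content sits in Theorem \ref{qpm1}, and the only genuinely new ingredient is the elementary remark that non-isolating implies boundary compatible for both orientations. The one place requiring mild care is arranging that auxiliary arcs for distinct components of $L_i$ are simultaneously disjoint, but this is routine given non-isolation since each complementary region can supply its arcs independently in disjoint boundary collars.
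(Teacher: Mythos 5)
Your proposal is correct and is essentially the paper's own argument: the paper likewise notes that a non-isolating link is boundary compatible with either orientation, views $S$ as a cobordism from $L_1$ to $-L_2$ or from $L_2$ to $-L_1$ (i.e.\ applies Theorem \ref{qpm1} with the two labelings of the boundary), and invokes Theorem \ref{uniquetransapprox} to identify the transverse representatives with the Legendrian push-offs. Your added detail on constructing the auxiliary arcs is a fine expansion of the claim the paper simply asserts.
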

\begin{proof}
A non-isolating link is boundary compatible, with either choice of orientation.
We may view $S$ as a cobordism from $L_1$ to $-L_2$ or from $L_2$ to $-L_1$. 
Applying Theorems \ref{uniquetransapprox} and \ref{qpm1} gives the result.


\end{proof}

\section{Naturality under contact +1 surgery}
\label{sec:contactsurg}



Ozsv\'ath and Stipsicz \cite{contactsurgery} established naturality of the invariant $\mathcal{L}$ of a Legendrian knot $L$ under contact +1 surgery along some other Legendrian knot $K$.
In light of Theorem \ref{equiv}, the following is a natural generalization to the setting of Legendrian links.

If $K$ is Legendrian realized by a non-isolating curve in the page of an open book supporting $(Y,\xi)$, then contact +1 surgery along $K$ corresponds to composing the open book monodromy with a left-hand Dehn twist $\tau_K ^{-1}$ about $K$ \cite{gayconc}.

\begin{theorem}
\label{contactsurgery}
Suppose $L\subset (Y,\xi)$ is an $n$ component Legendrian link, and $K\subset Y\smallsetminus L$ is a Legendrian knot. Let $L_K$ denote the image of $L$ in the contact +1 surgery $(Y_K,\xi_K)$. There exist homomorphisms of $\mathbb{F}$, $\mathbb{F}[U]$, and $\mathbb{F}[U_1,\dots,U_n]$-modules
\[
HFK^\circ (-Y,L)\to HFK^\circ(-Y_K,L_K)
\]
sending $t^\circ (L)$ to $t^\circ (L_K)$, where $\circ\in\{\wedge,=,-\}$.
\end{theorem}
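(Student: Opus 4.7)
The plan is to place $L$ and $K$ simultaneously inside a common supporting open book, interpret contact $+1$ surgery along $K$ as composition of the pointed monodromy with $\tau_K^{-1}$, and then extract the desired maps from comultiplicativity of the transverse invariant.

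First, include the Legendrian graph $L \cup K$ in the $1$-skeleton of a contact cell decomposition of $(Y,\xi)$, and let $(B,\pi)$ be the resulting supporting open book with page $\Sigma$ and monodromy $\phi$; both $L$ and $K$ then lie as non-isolating Legendrians in $\Sigma$. Pick a collection of auxiliary arcs $\{\alpha_i\}$ for $L$ disjoint from $K$, and set $\Phi := \phi \circ \prod_i \pi_{\alpha_i}$; by Theorem \ref{uniquetransapprox} the transverse closure of $\Phi$ is $L_+$. Contact $+1$ surgery along $K$ replaces $\phi$ with $\phi \circ \tau_K^{-1}$, and since $\tau_K^{-1}$ commutes with each push-map $\pi_{\alpha_i}$ (their supports are disjoint from $K$), the pointed monodromy $\Phi \circ \tau_K^{-1}$ has transverse closure $(L_K)_+$ in $(Y_K,\xi_K)$.

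Next, apply the factorization $\Phi = (\Phi \circ \tau_K^{-1}) \cdot \tau_K$ to Theorem \ref{thm:comultiplication}, Corollary \ref{comult=}, and Theorem \ref{thm:comult} with $h = \Phi \tau_K^{-1}$, $g = \tau_K$, and $hg = \Phi$. This yields comultiplication maps
\[
\mu^\circ : HFK^\circ(-Y,L) \longrightarrow HFK^\circ(-Y_g,K_g) \otimes HFK^\circ(-Y_K,L_K), \qquad t^\circ(L) \mapsto t^\circ(K_g) \otimes t^\circ(L_K),
\]
for each $\circ \in \{\wedge, =, -\}$. Here $K_g$ is the closure of $\tau_K$, a disjoint union of $n$ meridional unknots of the binding, and $(Y_g,\xi_g)$ is supported by the open book $(\Sigma, \tau_K)$, whose monodromy is a single positive Dehn twist. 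Hence $(Y_g,\xi_g)$ is Stein fillable and $c(\xi_g) \neq 0$.

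Finally, project the $HFK^\circ(-Y_g, K_g)$ factor onto a rank-one summand sending $t^\circ(K_g)$ to the unit. Since $\tau_K$ acts trivially in a neighborhood of each marked point $p_i$, each basepoint pair $(w_i,z_i)$ in the Heegaard diagram of Section \ref{subsec:invts} sits in a single region, so $n_{w_i}(\phi) = n_{z_i}(\phi)$ for every class contributing to $\partial^-$. Iterating the chain-level identification from Lemma \ref{meridians} then gives
\[
HFK^-(-Y_g,K_g) \;\cong\; \widehat{HF}(-Y_g) \otimes \mathbb{F}[U_1,\ldots,U_n], \qquad t^-(K_g) \,=\, c(\xi_g) \otimes 1.
\]
As $c(\xi_g) \neq 0$, projection onto the free $\mathbb{F}[U_1,\ldots,U_n]$-summand $\langle c(\xi_g) \rangle \otimes \mathbb{F}[U_1,\ldots,U_n]$ furnishes $\pi^-$; the $\pi^=$ and $\pi^\wedge$ analogues follow by identifying the $U_i$-actions or by setting all $U_i$ to zero, as in Section \ref{nat}. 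Composing $(\pi^\circ \otimes \mathrm{id}) \circ \mu^\circ$ delivers the desired homomorphism sending $t^\circ(L)$ to $t^\circ(L_K)$. The only point requiring care is the multi-meridian extension of Lemma \ref{meridians} underlying the $t^-$-projection; this goes through verbatim because each meridional basepoint pair contributes a free $\mathbb{F}[U_i]$ tensor factor without altering the differential.
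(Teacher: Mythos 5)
Your architecture is the same as the paper's: put $K\cup L$ in a page via a contact cell decomposition, braid $L_+$ as the closure of $\phi\circ\prod_i\pi_{\alpha_i}=\psi\circ\tau_K$ with $\psi$ braiding $(L_K)_+$, apply Theorem \ref{thm:comultiplication}, Corollary \ref{comult=} and Theorem \ref{thm:comult} (for the hat case one should add a word about Remark \ref{hatcomult}, which applies since all three pointed monodromies are pure), and then kill the tensor factor coming from the closure of $\tau_K$ by a projection sending its invariant to $1$. You are, if anything, more careful than the paper in locating the closure of $\tau_K$ in the contact manifold supported by $(\Sigma,\tau_K)$ and noting it is Stein fillable, rather than passing directly to the projection $G^\circ$ for $U_n\subset \#^m(S^1\times S^2)$.

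The one genuine problem is the final projection step, which you describe as going through verbatim but which, as written, fails for $n\ge 2$. The diagram $\mathcal{H}_{\tau_K}$ carries $n$ basepoint pairs, hence $\mathrm{genus}(S)+n-1$ curves of each type; even granting that each pair $(w_i,z_i)$ lies in a single region, the complex is the multi-pointed hat complex of $(S,\boldsymbol{\beta},\boldsymbol{\alpha},\mathbf{w})$ tensored with $\mathbb{F}[U_1,\dots,U_n]$, and the homology of that hat complex is $\widehat{HF}(-Y_g)\otimes W^{\otimes(n-1)}$ with $W$ two-dimensional, not $\widehat{HF}(-Y_g)$. So your asserted isomorphism $HFK^-(-Y_g,K_g)\cong\widehat{HF}(-Y_g)\otimes\mathbb{F}[U_1,\dots,U_n]$ has the wrong rank, the identity $t^-(K_g)=c(\xi_g)\otimes 1$ is not available, and iterating Lemma \ref{meridians} does not produce either. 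The repair is short and stays inside your argument: since no $U_i$ appears in the differential, one has $HFK^-(-Y_g,K_g)\cong\widetilde{HFK}(\mathcal{H}_{\tau_K})\otimes_{\mathbb{F}}\mathbb{F}[U_1,\dots,U_n]$ as $\mathbb{F}[U_1,\dots,U_n]$-modules, with $t^-(K_g)$ corresponding to $\widetilde{t}(\tau_K)\otimes 1$; the class $\widetilde{t}(\tau_K)$ is nonzero by Proposition \ref{prop:nonzero} combined with Lemma \ref{tildenonvanishing} (so Stein fillability is not even needed), and since $\mathbb{F}=\mathbb{Z}_2$ is a field, any linear functional on $\widetilde{HFK}(\mathcal{H}_{\tau_K})$ sending $\widetilde{t}(\tau_K)$ to $1$, tensored with the identity of $\mathbb{F}[U_1,\dots,U_n]$, yields the required module projection; the $=$ and hat versions then follow by specializing the $U_i$ as you indicate.
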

\begin{proof}

We may choose a contact cell decomposition of $(Y,\xi)$ whose 1-skeleton contains $K\cup L$. A ribbon of the 1-skeleton is a page $\Sigma = \pi^{-1}(0)$ of an open book $(B,\pi)$ supporting $(Y,\xi)$ having monodromy $\phi$. $K\cup L$ naturally sits in $\Sigma$ as a non-isolating link. 
Choosing a set of auxiliary arcs $\{\alpha_i\}$ for $L$ we obtain a pointed monodromy 
\[
\phi\circ \prod_i \pi_{\alpha_i} =\Big{(}\phi\circ \prod_i \pi_{\alpha_i} \circ \tau_{K}^{-1}\Big{)} \circ \tau_K =: \psi \circ \tau_K
\]
whose closure is transversely isotopic to $L_+$.

The closure of $\psi$ is transversely isotopic to the positive transverse push-off of $L_K\subset Y_K$, and the closure of $\tau_K$ is transversely isotopic to $U_n\subset \#^m (S^1\times S^2)$.
By Theorems \ref{thm:comultiplication}, \ref{thm:comult}\footnote{since the three pointed monodromies are pure braids, the comultiplication map for the totally blocked theory $\widetilde{HFK}$  is one for the hat theory $\widehat{HFK}$, see Remark \ref{hatcomult}.}, and Corollary \ref{comult=} , there exist comultiplication maps
\[
\mu^\circ : HFK^\circ(-Y,L)\to HFK^\circ (-Y_K,L_K)\otimes HFK^\circ (-\#^m (S^1\times S^2),U_n)\\
\]
sending
\[
t^\circ (L)= t^\circ (\psi\circ \tau_K)  \xrightarrow{\mu^\circ} t^\circ (\psi)\otimes t^\circ(\tau_K) =t^\circ(L_K)\otimes t^\circ(U_n).
\]
Composing with $\mathds{1}\otimes G^\circ$ gives the desired map; $G^\circ$ is the map defined after Lemma \ref{lemqpm}.
\end{proof}

Sahamie  \cite{sahamie} established a functorial connection between the invariant $\mathcal{L}$ of a Legendrian knot and the contact invariant of the resulting contact +1 surgery. The following theorem is a generalization to the setting of Legendrian links:

\begin{theorem}
Suppose $L\subset (Y,\xi)$ is an $n$ component Legendrian link, and $K$ a Legendrian knot. 
Let $L_K$ denote the image of $L$ in the contact +1 surgery $(Y_K,\xi_K)$.
There exist homomorphisms of $\mathbb{F}[U]$ and $\mathbb{F}$-modules
\[
HFK^\circ(-Y,K\cup L)\to HFK^\circ (-Y_K,L_K)\]
\[\widehat{HFK}(-Y,K)\to \widehat{HF}(-Y_K)
\]
sending $t^\circ (K\cup L)$ to $t^\circ (L_K)$, and $\widehat{t}(K)$ to $c(\xi_K)$, respectively; here $\circ \in \{\wedge,=\}$.
\end{theorem}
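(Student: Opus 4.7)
The plan is to extend the proof of Theorem~\ref{contactsurgery} by tracking $K$ as one of the braided components in addition to $L$. The second map of the theorem will follow from the first by specializing $L=\emptyset$ and $\circ=\wedge$: then $L_K=\emptyset$, $\widehat{HFK}(-Y_K,\emptyset)$ is canonically identified with $\widehat{HF}(-Y_K)$, and $t^\wedge$ of the empty transverse link coincides with $c(\xi_K)$ (compare Lemma~\ref{meridians}).

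First I would pick a contact cell decomposition of $(Y,\xi)$ whose 1-skeleton contains $K\cup L$, producing a supporting open book $(B,\pi)$ with page $\Sigma=\pi^{-1}(0)$ and monodromy $\phi$ in which $K\cup L$ appears as a non-isolating sublink. Choose auxiliary arcs $\alpha_K$ for $K$ and $\{\alpha_i\}_{i=1}^n$ for the components of $L$, arranged so that the arcs $\alpha_i$ are disjoint from a neighborhood of $K$ (ensuring that the push-maps $\pi_{\alpha_i}$ commute with $\tau_K$). Then the pointed monodromy
\[
\phi\circ \pi_{\alpha_K}\circ \prod_{i=1}^n \pi_{\alpha_i}\in \mathrm{Mod}\bigl(\Sigma\smallsetminus\{p_K,p_1,\dots,p_n\},\partial \Sigma\bigr)
\]
has transverse closure $K_+\cup L_+\subset(Y,\xi)$ and represents $t^\circ(K\cup L)$.

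Next I would decompose this pointed monodromy as $\psi\circ \tau_K$, where $\psi:=\phi\circ\pi_{\alpha_K}\circ\prod_i \pi_{\alpha_i}\circ \tau_K^{-1}$. The underlying monodromy $\phi\circ\tau_K^{-1}$ supports $(Y_K,\xi_K)$. The key calculation is to identify the transverse closure of $\psi$: writing $\pi_{\alpha_K}=\tau_{\delta_1}\tau_{\delta_2}^{-1}$ in the pointed mapping class group, where $\delta_1,\delta_2$ are the two push-offs of $K$ on either side of $p_K$, and choosing the lift of $\tau_K^{-1}$ to be $\tau_{\delta_1}^{-1}$, the commuting Dehn twists cancel and yield $\pi_{\alpha_K}\circ\tau_K^{-1}=\tau_{\delta_2}^{-1}$. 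Since the $\pi_{\alpha_i}$ commute with $\tau_K$, the monodromy $\psi$ is isotopic to $\phi\circ\tau_{\delta_2}^{-1}\circ\prod_i\pi_{\alpha_i}$. Now the strand at $p_K$ carries no push-map, so it closes up to a positively-braided meridional unknot $U_1$ of the binding in $(Y_K,\xi_K)$, while the strands at $p_1,\dots,p_n$ close up to $L_K^+$. Hence the closure of $\psi$ is transversely isotopic to $U_1\cup L_K\subset (Y_K,\xi_K)$.

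Comultiplicativity (Theorems~\ref{thm:comultiplication} and~\ref{thm:comult}, Corollary~\ref{comult=}) then provides a homomorphism
\[
\mu^\circ:HFK^\circ(-Y,K\cup L)\to HFK^\circ(-Y_K,U_1\cup L_K)\otimes HFK^\circ(-Y',K_{\tau_K})
\]
sending $t^\circ(K\cup L)\mapsto t^\circ(U_1\cup L_K)\otimes t^\circ(K_{\tau_K})$, where $K_{\tau_K}\subset(Y',\xi')$ is the closure of $\tau_K$ regarded as a pointed monodromy on $\Sigma$ with the $n+1$ marked points. By Proposition~\ref{prop:nonzero}, $\widehat{t}(K_{\tau_K})\neq 0$, so $t^\circ(K_{\tau_K})$ is nonzero as well and generates a cyclic submodule admitting a natural projection $G^\circ$. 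Composing $\mu^\circ$ with $\mathds{1}\otimes G^\circ$ discards the second tensor factor, and composing further with the birth map $h^\circ$ of Lemma~\ref{birth} kills the extraneous component $U_1$, producing the desired homomorphism $HFK^\circ(-Y,K\cup L)\to HFK^\circ(-Y_K,L_K)$ sending $t^\circ(K\cup L)\mapsto t^\circ(L_K)$.

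The main obstacle will be the mapping class group identity $\pi_{\alpha_K}\circ\tau_K^{-1}=\tau_{\delta_2}^{-1}$, together with the verification that the resulting $p_K$-strand does close up to the standard positively-braided meridional unknot of the binding in $(Y_K,\xi_K)$. Geometrically this reflects the fact that contact $+1$ surgery trivializes the Legendrian $K$, so that its transverse push-off becomes a meridian of the newly-attached solid torus; checking this at the level of pointed monodromies will be the computational heart of the argument.
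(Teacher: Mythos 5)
Your construction is essentially the paper's: braid $K\cup L$ about an open book adapted to a contact cell decomposition containing $K\cup L$, use the identity expressing the push-map $\pi_{\alpha_K}$ as a difference of Dehn twists about the two push-offs $\delta_1,\delta_2$ of the braided copy of $K$ to peel off a single positive Dehn twist about a curve parallel to $K$, identify the closure of the remaining factor $\psi$ with $(L_K)_+\cup U_1\subset (Y_K,\xi_K)$, comultiply, project away the Dehn-twist factor, and remove the split unknot. The one substantive divergence is how you handle the second tensor factor. You leave it as $HFK^\circ(-Y',K_{\tau_K})$, the closure of $\tau_K$ in whatever manifold the open book $(\Sigma,\tau_K)$ gives, and justify the projection $G^\circ$ solely by $\widehat{t}(K_{\tau_K})\ne 0$ via Proposition \ref{prop:nonzero}. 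For $\circ=\wedge$ this suffices (a nonzero vector over $\mathbb{F}$ spans a direct summand), but for the $\mathbb{F}[U]$-case $\circ={=}$ it does not: a nonzero class in an $\mathbb{F}[U]$-module need not generate a free summand, and there need not exist any $\mathbb{F}[U]$-module map sending it to a generator (if $t^=$ were $U$-torsion, or lay in the image of $U$, no projection compatible with the tensor bookkeeping exists), so "generates a cyclic submodule admitting a natural projection" is an unproved claim. The paper avoids this by identifying that factor with the closure $U_n$ of the twist-free pointed monodromy, where Lemma \ref{lemqpm} (trivial differential) makes the projections $G^=$, $G^-$ explicit; your variant needs either that identification or a separate argument that $t^=$ of the closure of a single positive Dehn twist generates a free $\mathbb{F}[U]$-summand.

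A minor point: deriving the second map by specializing $L=\emptyset$ is fine in spirit, but then the final step cannot be the birth map $h^\circ$ of Lemma \ref{birth}, whose destabilization argument requires a nonempty residual link (at least one remaining basepoint pair); in that degenerate case the correct last step is $\widehat{g}$ of Lemma \ref{meridians}, mapping $\widehat{HFK}(-Y_K,U_1)\to\widehat{HF}(-Y_K)$ and $\widehat{t}(U_1)\mapsto c(\xi_K)$. This is exactly the separate argument the paper gives, so your parenthetical "compare Lemma \ref{meridians}" is the right instinct, but it should replace, not supplement, the appeal to Lemma \ref{birth} there; also note the paper never defines $\widehat{HFK}$ of an empty link, so the target should be reached through $\widehat{HFK}(-Y_K,U_1)$ rather than through a canonical identification of $\widehat{HFK}(-Y_K,\emptyset)$ with $\widehat{HF}(-Y_K)$.
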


\begin{proof}
We construct the first map.
We may choose a contact cell decomposition of $(Y,\xi)$ whose 1-skeleton contains $K\cup L$. A ribbon of the 1-skeleton is a page $\Sigma = \pi^{-1}(0)$ of an open book $(B,\pi)$ supporting $(Y,\xi)$ having monodromy $\phi$. $K\cup L$ naturally sits in $\Sigma$ as a non-isolating link. Let $\alpha\cup \{\alpha_i\}$ denote a set of auxiliary arcs for $K\cup L$, with $\alpha$ an auxiliary arc for $K$. 

We obtain a pointed monodromy
$
\phi \circ \prod_i \pi_{\alpha_i} \circ \pi_\alpha
$
having closure $(K\cup L)_+$.
Recall the construction of $\pi_\alpha$. The arc $\alpha$ guides an isotopy of $K$ within $\Sigma$ to $K'$ which contains a sub-arc $\beta$ lying in a collar neighborhood of $\partial \Sigma$ fixed by the monodromy $\phi$. The map $\pi_\alpha=\pi_{K'}$ is a push-map of a marked point $p\in \alpha\cap\beta$ about $K'$. Let $A$ be a small neighborhood of $K'$, and $\partial A = \delta_1 \cup \delta_2$. Suppose $K'$ is isotopic to $\delta_2$ as an oriented curve, then, by definition of the push-map we have $\pi_\alpha = \tau_{\delta_1}^{-1}\circ \tau_{\delta_2}$.

Thus, $(K\cup L)_+$ is transversely isotopic to the closure of 
\[
\phi \circ \prod_i \pi_{\alpha_i} \circ \pi_\alpha = \Big{(} \phi \circ \prod_i \pi_{\alpha_i} \circ \tau_{\delta_1}^{-1} \Big{)} \circ \tau_{\delta_2} =: \psi \circ \tau_{\delta_2}.
\]
The transverse closure of $\psi$ is $(L_K)_+\cup U_1\subset (Y_K,\xi_K)$, and the transverse closure of $\tau_{\delta_2}$ is $U_n\subset (\#^m (S^1\times S^2),\xi_{std})$. By Corollary \ref{comult=} and  Theorem \ref{thm:comult}, there exist comultiplication maps
\[
\mu^\circ : HFK^\circ(-Y,K\cup L)\to HFK^\circ (-Y_K,L_K\cup U_1)\otimes HFK^\circ (-\#^m (S^1\times S^2),U_n)
\]
sending\[
t^\circ(K\cup L) = t^\circ (\psi\circ\tau_{\delta_2}) \xrightarrow{\mu^\circ} t^\circ (\psi)\otimes t^\circ(\tau_{\delta_2})= t^\circ (L_K\cup U_1)\otimes t^\circ (U_n).
\]
Composing $\mu^\circ$ with $h^\circ\otimes G^\circ$ gives the desired map, where 
$h^\circ$ and $G^\circ$ are the maps defined in Lemma \ref{birth} and after Lemma \ref{lemqpm}, respectively.

The construction of the second map is very similar. The monodromy
\[
\phi \circ \pi_\alpha = \Big{(}\phi\circ\tau_{\delta_1}^{-1}\Big{)}\circ \tau_{\delta_2} =:\psi '\circ \tau_{\delta_2}
\]
has closure transversely isotopic to $K$. The closure of $\psi'$ is $U_1\subset (Y_K,\xi_K)$, and the closure\footnote{here, we are viewing $\tau_{\delta_2}$ as an element of the mapping class group of $\Sigma$ equipped with a single marked point. In the construction of the first map we viewed $\tau_{\delta_2}$ as an element of $Mod(\Sigma\smallsetminus \{p_1,\dots,p_n\},\partial \Sigma)$.} of $\tau_{\delta_2}$ is $U_1\subset (\#^m (S^1\times S^2),\xi_{std})$. By Theorem \ref{thm:comult}, there exists a comultiplication map
\[
\widehat{\mu} : \widehat{HFK}(-Y,K)\to \widehat{HFK} (-Y_K,U_1)\otimes \widehat{HFK} (-\#^m (S^1\times S^2),U_1)\\
\]sending
\[\widehat{t}(K) = \widehat{t}(\psi ' \circ \tau_{\delta_2}) \xrightarrow{\widehat{\mu}} \widehat{t} (\psi)\otimes \widehat{t} (\tau_{\delta_2}) = \widehat{t}(U_1)\otimes \widehat{t}(U_1).
\]
Composing $\widehat{\mu}$ with $\widehat{g}\otimes \widehat{G}$ gives the desired map, where $\widehat{g}$ and $\widehat{G}$ are the maps defined in Lemma \ref{meridians} and after Lemma \ref{lemqpm}, respectively.

\end{proof}

\section{Lagrangian link cobordisms}
\label{sec:lagr}

Let $L_1,L_2\subset (Y,\xi)$ be Legendrian links. 
We say that $L_2$ is obtained from $L_1$ via a \emph{birth} (\emph{pinch move}) if there exists a Darboux ball $B\subset (Y,\xi)$ such that the front projections of $L_1\cap B$ and $L_2\cap B$ differ as pictured on the top (respectively bottom) of Figure \ref{fig:decomp}.

\begin{figure}[h]
\def\svgwidth{100pt}
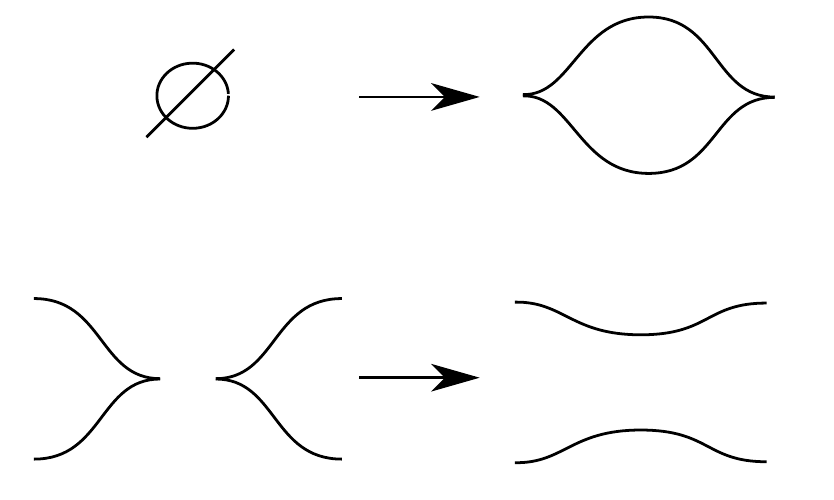
\caption{Birth and pinch moves.}
\label{fig:decomp}
\end{figure}

\begin{definition}

Let $\alpha$ be a 1-form on $Y$ whose kernel specifies $\xi$. The \emph{symplectization} of $(Y,\xi)$ is the symplectic manifold $Symp(Y,\xi):=(Y\times \mathbb{R}, d(e^t \alpha))$. 
A link cobordism $S\subset Symp(Y,\xi)$ from $L_1$ to $L_2$ is \emph{decomposable Lagrangian cobordism} if it may be constructed by stacking elementary cobordisms associated to births and pinches.
\end{definition}

Contact +1 surgery along a Legendrian $K$ admits an inverse operation, called \emph{Legendrian surgery} \cite{DGSF}, which 4-dimensionally corresponds to a Weinstein 2-handle attachment along $K$\cite{ehweinstein}. If $(B,\pi)$ is an open book supporting $(Y,\xi)$ with monodromy $\phi$, then performing Legendrian surgery along a non-isolating curve $K$ in the page $\pi^{-1}(0)$ corresponds to composing $\phi$ with a right-handed Dehn twist $\tau_K$.

Weinstein 1-handle attachments also admit an interpretation via open books. There are two cases, either the 1-handle connects two disjoint components $(Y_1,\xi_1)$ and $(Y_2,\xi_2)$ and corresponds to a contact connected sum of these, or the 1-handle is attached to a single component and corresponds to taking a contact connected sum with $(S^1\times S^2,\xi_{std})$. In either case, one attaches a 2-dimensional 1-handle to the page of some open book and extends the monodromy trivially \cite{gayconc}. 

\begin{definition}
Let $L\subset (Y,\xi)$ be a Legendrian, and consider the Lagrangian $L\times [0,1]$ in a compact piece $(Y\times [0,1],d(e^t \alpha))$ of the symplectization. Attaching Weinstein handles along $(Y\smallsetminus L)\times \{1\}$, we obtain a Weinstein cobordism $(W,\omega):(Y,\xi)\to (Y',\xi')$. The surface $S:=L\times [0,1]$ considered as a subset of $(W,\omega)$ is a \emph{cylindrical Lagrangian cobordism}.
A \emph{decomposable Lagrangian cobordism in a Weinstein cobordism} is one which can be built up by stacking decomposable and cylindrical Lagrangian cobordisms.
\end{definition}

To make sense of naturality of the Legendrian invariant under decomposable Lagrangian cobordisms inside Weinstein cobordisms, we extend the definition of the Legendrian invariant.
Let $(Y,\xi)$ be a disjoint union of connected contact manifolds $\bigcup_{i=1}^n (Y_i,\xi_i)$ and $L = \bigcup _{i=1}^m L_i$ be Legendrian link with $L_i\subset (Y_i,\xi_i)$; $m\le n$. Then for $\circ\in\{\wedge,=,-\}$ we define
\[
t^\circ (L):= \bigotimes_{i=1}^m t^\circ (L_i) \otimes \bigotimes_{i=m+1}^n c(\xi_i)\subset \bigotimes_{i=1}^m HFK^\circ (-Y_i,L_i)\otimes \bigotimes_{i=m+1}^n \widehat{HF}(-Y_i) =: HFK^\circ(-Y,L).
\]
This definition is natural from the perspective of sutured Floer homology, where the Legendrian invariant is interpreted as the contact invariant of the complement of a standard neighborhood of the Legendrian link \cite{stipV}, see also \cite{zarev}.


\begin{theorem}
\label{decomp}
Let $(W,\omega): (Y_1,\xi_1)\to (Y_2,\xi_2)$ be a Weinstein cobordism. A decomposable Lagrangian cobordism $S\subset W$ between Legendrian links $L_i\subset (Y_i,\xi_i)$ induces an $\mathbb{F}[U]$-module homomorphism 
\[
f_S: HFK^{=}(-Y_2,L_2)\to HFK^{=}(-Y_1,L_1)
\]
which preserves the Legendrian invariant, $t^=(L_2)\xrightarrow{f_S} t^=(L_1)$. Moreover, we have a relationship between the $\widehat{t}$-invariants, if $\widehat{t}(L_1)\ne 0$ then $\widehat{t}(L_2)\ne 0$. 
\end{theorem}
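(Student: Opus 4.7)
The plan is to decompose $S$ into elementary building blocks and handle each in turn, then compose. By definition $S$ is stacked from three kinds of pieces: births, pinches (the moves living in symplectizations), and cylindrical Lagrangians $L\times[0,1]$ sitting in the complements of Weinstein handles of indices $0$, $1$, $2$. For each such piece I will produce an $\mathbb{F}[U]$-module map $HFK^=(-Y^+,L^+)\to HFK^=(-Y^-,L^-)$ going from the top of the piece to the bottom, sending $t^=(L^+)$ to $t^=(L^-)$, together with the implication that $\widehat{t}(L^-)\neq 0$ forces $\widehat{t}(L^+)\neq 0$. Composing these maps from $L_2$ down to $L_1$ defines $f_S$, and chaining the $\widehat{t}$ implications up the stack gives the stated non-vanishing conclusion.

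Births and Weinstein handles are handled by direct appeal to the naturality machinery already collected. For a birth one has $L^+=L^-\sqcup U$, where $U$ is a $tb=-1$ Legendrian unknot bounding a Darboux disk; realizing $L^-$ as a non-isolating collection in the page of a supporting open book (from a contact cell decomposition containing $L^-$) and adding $U$ as a small boundary-parallel unknot in the same page identifies its transverse push-off with the meridional unknot $U_1$ of Section \ref{nat}, so Lemma \ref{birth} supplies the map. Index $0$ handles are trivial. An index $1$ handle induces a contact connected sum, and Lemma \ref{connectedsum} (composed, if needed, with the projection $\widehat{HF}(-(S^1\times S^2))\to\langle c(\xi_{std})\rangle\simeq \mathbb{F}$ afforded by Lemma \ref{meridians}) gives the required split map. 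An index $2$ handle is Legendrian surgery along some Legendrian $K\subset Y^-\smallsetminus L^-$; its inverse is a contact $+1$ surgery on $(Y^+,\xi^+)$ along the surgery dual, and Theorem \ref{contactsurgery} produces the map $HFK^=(-Y^+,L^+)\to HFK^=(-Y^-,L^-)$ directly.

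The main obstacle, as flagged in the introduction, is the pinch move. The key geometric claim is that whenever Legendrians $L^+,L^-\subset(Y,\xi)$ differ by a single Lagrangian pinch, there exists an open book $(B,\pi)$ supporting $(Y,\xi)$ and a pair of pants $P\subset \Sigma=\pi^{-1}(0)$ with oriented boundary $L^+\cup(-L^-)$, such that the transverse push-offs $(L^\pm)_+$ are transversely isotopic to the boundary-compatible transverse representatives produced by Theorem \ref{uniquetransapprox}. To build such an open book I will start with one whose page contains $L^-$ Legendrian realized as a non-isolating collection adapted to the pinch arc (from a contact cell decomposition containing both $L^-$ and the Legendrian arc carrying the pinch), and enlarge via Giroux stabilizations and/or an in-page handle attachment so that the band in the page corresponding to the Lagrangian saddle joins the two appropriate strands of $L^-$ in a local Darboux model matching the pinch; this realizes $L^+$ and $L^-$ as non-isolating curves in a single page cobounding the pair of pants $P$. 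Applying Theorem \ref{qpm1} with $K_1=L^+$ and $-K_2=L^-$ then yields the required $\mathbb{F}[U]$-module map preserving $t^=$ and giving the $\widehat{t}$ implication.

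Composing the elementary maps from top to bottom defines $f_S$; preservation of $t^=$ is immediate, and the step-by-step $\widehat{t}$ implications chain to give $\widehat{t}(L_1)\neq 0\Rightarrow\widehat{t}(L_2)\neq 0$. The hard part will be the pinch realization: carefully exhibiting a local model in which the Darboux ball carrying the pinch sits inside an open book page so that the two Legendrians appear as the oriented boundary of a page subsurface, with transverse push-offs matching the boundary-compatible representatives assigned by Theorem \ref{uniquetransapprox}. Everything else reduces directly to the naturality statements proved in Sections \ref{secqpm} and \ref{sec:contactsurg}.
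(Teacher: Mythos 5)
Your skeleton is the same as the paper's: reduce to elementary pieces (births, pinches, cylindrical Lagrangians over Weinstein $0$-, $1$-, $2$-handles), dispatch the births and handles via Lemmas \ref{meridians}, \ref{birth}, \ref{connectedsum} and Theorem \ref{contactsurgery}, and treat the pinch by exhibiting a page subsurface cobounded by the two Legendrians and invoking Theorem \ref{qpm1}. The genuine gap is exactly where you flag "the hard part": for the pinch you only assert the key geometric claim, and your construction plan (realize $L^-$ in a page "adapted to the pinch arc," then "enlarge via Giroux stabilizations and/or an in-page handle attachment" so that a band in the page matches the saddle) is not an argument --- nothing in it explains why the saddle band can be placed inside a page with the correct orientations and with the page framing matching the contact framing. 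The paper's mechanism is concrete and avoids any ad hoc stabilization scheme: add a single Legendrian edge $e$ to $L_1$ inside the Darboux ball of the pinch to form a Legendrian graph $\Gamma=L_1\cup e$, take a ribbon $R(\Gamma)$ (a union of annuli together with one pair of pants), extend $\Gamma$ to the $1$-skeleton of a contact cell decomposition so that $R(\Gamma)$ is a subsurface of a page (Theorem \ref{thmqpgraph}), and take for the surface $R(\Gamma)$ with a boundary collar removed. Without this (or an equally explicit substitute), the central case of the theorem is unproved.

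Two further points. First, your claim that the pair of pants can be chosen with oriented boundary realizing $L^+$ itself is too strong: in the ribbon construction the two boundary links Legendrian-realize to $-L_1$ and to the \emph{negative stabilization} $S_-(L_2)$, not to $L_2$ (this is the same phenomenon as in the proof of Proposition \ref{approx}, where sliding over the stabilizing handle produces $S_-(K)$). This is harmless because $t^\circ$ depends only on the transverse push-off, and your requirement stated at the level of push-offs is the right one, but a correct proof must identify the stabilized Legendrian rather than assume the unstabilized one appears. Second, minor bookkeeping with the extended invariant for disconnected $Y$: when a birth, or the attaching circle of a $2$-handle, lies in a component disjoint from the link, the relevant tensor factor is $\widehat{HF}$ with the contact class, so one uses Lemma \ref{meridians} (for the birth) and naturality of $c(\xi)$ under Legendrian surgery (for the $2$-handle) in place of Lemma \ref{birth} and Theorem \ref{contactsurgery}; your write-up applies the latter two uniformly.
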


\begin{proof}
It suffices to establish the result for $S$ a cylindrical Lagrangian in a Weinstein cobordism built with a single Weinstein handle, and for $S$ an elementary cobordism associated to a single birth or pinch. The case of a Weinstein 0-handle is trivial, as this corresponds to taking the disjoint union with a tight 3-sphere, which has non-vanishing contact invariant. 
A Weinstein 1-handle corresponds to a contact connected sum between pieces of $Y_1$ which may or may not contain parts of the link $L_1$; Lemma \ref{connectedsum} takes care of all cases. 

A Weinstein 2-handle corresponds to performing Legendrian surgery along a knot $K\subset Y_1$ disjoint from $L_1$. If the connected component of $Y_1$ containing $K$ is not disjoint from $L$ we apply Theorem \ref{contactsurgery}. Otherwise, we appeal to the naturality of the contact invariant under Legendrian surgery \cite{contactclass}.\footnote{this also follows from comultiplicativity of the contact invariant \cite{comultcontact}.}

Consider the case of a birth. $L_2$ is the disjoint union of $L_1$ with a $tb=-1$ Legendrian approximation of the $sl=-1$ transverse unknot $U_1$ constructed in Section \ref{nat}. 
If the birth occurs in a connected component of $Y_1$ disjoint from $L_1$, we apply Lemma \ref{meridians}, otherwise we apply Lemma \ref{birth}.


Now, suppose that $S$ is an elementary cobordism associated to a pinch move. We will show that there exists a surface sitting in a page of an open book supporting $(Y,\xi)$, whose oriented boundary is the union of two non-isolating links, one giving rise to the Legendrian $S_-(L_2)$ and the other $-L_1$. Given such a surface, Theorem \ref{qpm1} provides a map
\[
HFK^= (-Y, L_2)\to HFK^= (-Y,L_1)
\]
sending \[t^= (L_2) = t^= (S_-(L_2)_+)\to t^= ((L_1)_+) = t^= (L_1)\]
and also gives that $\widehat{t}(L_1)\ne 0 \implies \widehat{t}(L_2)\ne 0$.

For a pinch move there are two cases, corresponding to the two possible orientations on $L_1$.
Let $B$ denote the Darboux ball where the pinch move occurs.
First suppose that $L_1\cap B$ is oriented as on the left side of Figure \ref{fig:pinch}.  
We construct a Legendrian graph $\Gamma$ by adding a single edge to $L_1$ within $B$, pictured in the center of Figure \ref{fig:pinch}.

\begin{figure}[h]
\def\svgwidth{230pt}
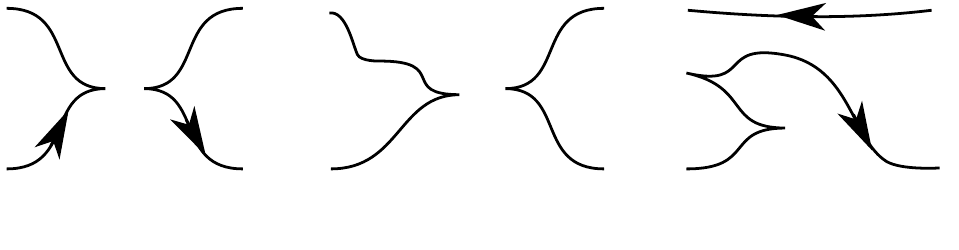
\caption{}
\label{fig:pinch}
\end{figure}

Following Avdek \cite{avdek}, we draw the front projection of a ribbon $R(\Gamma)\cap B$ on the left-hand side of Figure \ref{fig:ribbon}.

\begin{figure}[h]
\def\svgwidth{255pt}
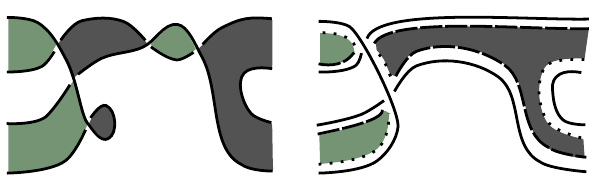
\caption{On the left, a front projection for the ribbon $R(\Gamma)$ is pictured; the positive side of the ribbon is shaded darker. On the right, we have our subsurface $\Sigma$, with oriented boundary $K_1$, which is dotted, and $K_2$, which is dashed.}
\label{fig:ribbon}
\end{figure}

The ribbon $R(\Gamma)$ consists of a disjoint union of some number of quasi-positive annuli and a single pair of pants. 
Let $\Sigma \subset R(\Gamma)$ be a subsurface obtained by removing a collar neighborhood of $\partial R(\Gamma)$ from $R(\Gamma)$.
The oriented boundary of $\Sigma$ can be written as the union of two non-isolating links $K_1\cup K_2\subset R(\Gamma)$, as on the right side of Figure \ref{fig:ribbon}. By construction, $K_1$ and $K_2$ give rise to the Legendrians $-L_1$ and $S_-(L_2)$, respectively. By extending $\Gamma$ to the 1-skeleton of a contact cell-decomposition $G$, we realize $\Sigma\subset R(\Gamma)\subset R(G)$ as a sub-surface of a page $R(G)$ of an open book supporting $(Y,\xi)$. 
The construction for the other orientation is identical, after reflecting the figures about the vertical axis.



\end{proof}


\nocite{comultcontact, comultgrid, weinstein}


\nocite{holknots}

\newpage
\thispagestyle{empty}
{\small
\markboth{References}{References}
\bibliographystyle{myalpha}
\bibliography{mybib}{}
}

\end{document}